      \newtheorem{assumption}{Assumption}
\newtheorem{theorem}{Theorem}
\newtheorem{corollary}{Corollary}
\newtheorem{remark}{Remark}
\newtheorem{lemma}{Lemma}
\begin{document} % max 12 pages
\date{}

\title{Set Membership identification of linear systems with guaranteed simulation accuracy}

\author{Marco~Lauricella  and 
	Lorenzo~Fagiano
	\thanks{The authors are with the Dipartimento di Elettronica, Informazione e Bioingegneria, Politecnico di Milano, Piazza Leonardo da Vinci
32, 20133 Milano, Italy. E-mail addresses: \{marco.lauricella $|$ lorenzo.fagiano \}@polimi.it.\newline This is a preprint of a paper published on the IEEE Transactions on Automatic Control, DOI: 10.1109/TAC.2020.2970146.}}

%\markboth{IEEE TRANSACTIONS ON AUTOMATIC CONTROL, VOL. XX, NO. XX, MONTH YEAR}%
%{Lauricella \MakeLowercase{\textit{et al.}: {A Set Membership approach to the identification of linear systems with guaranteed simulation accuracy}}}

\maketitle

\begin{abstract} % max 300 words
The problem of model identification for linear systems is considered, using a finite set of sampled data affected by a bounded measurement noise, with unknown bound. The objective is to identify one-step-ahead models and their accuracy in terms of worst-case simulation error bounds. To do so, the Set Membership identification framework is exploited. Theoretical results are derived, allowing one to estimate the noise bound and system decay rate. Then, these quantities and the data are employed to define the Feasible Parameter Set (FPS), which contains all possible models compatible with the available information. Here, the estimated decay rate is used to refine the standard FPS formulation, by adding constraints that enforce the desired converging behavior of the models' impulse response. Moreover, guaranteed simulation error bounds for an infinite future horizon are derived, improving over recent results pertaining to finite simulation horizon only. These bounds are the basis for a result and method to guarantee asymptotic stability of the identified model. Finally, the desired one-step-ahead model is identified by means of numerical optimization, and the related simulation error bounds are evaluated. Both input-output and state-space model structures are addressed. The approach is showcased on a numerical example and on real-world experimental data of the roll rate dynamics of an autonomous glider.
\end{abstract}

%\begin{IEEEkeywords} System identification, Set Membership identification, Parameter estimation, Simulation error, Error bound \end{IEEEkeywords}

\section{Introduction}
\label{s:intro}
The identification of models with guaranteed simulation accuracy is of great importance in all applications where long range predictions and the related error bounds are used for a robust decision-making task. Examples include resource planning, operations scheduling, and predictive control. In this paper, we address this problem for the case of discrete-time, linear time-invariant systems. 
Our aim is to obtain, from a finite data set, a one-step-ahead model of the system and a measure of its accuracy, in terms of bounds on the simulation error. We want to derive such bounds point-wise in time, for a long, possibly infinite, future simulation horizon, under the action of known future input signals.

The most popular identification procedures are studied in a stochastic framework, see e.g. \cite{ljung1999system}, where theoretical guarantees have been derived assuming that the noise signals are ruled by a probability distribution function. However, many applications feature unknown stochastic properties of the noise, or no sensible statistical hypotheses can be made at all  \cite{fogel1982value}. Motivated by these difficulties, Set Membership identification approaches have been developed under different hypotheses, such as bounded noise and uncertainties, pioneered by \cite{schweppe1968recursive} and \cite{witsenhausen1968sets}. The Set Membership approach provides a way to identify models of systems and to measure their quality without any probabilistic assumptions, referring only to the given data set and noise bounds \cite{kurzhanski1994modeling}, \cite{milanese2013bounding}, \cite{milanese2005model}, \cite{milanese1991optimal}, \cite{walter1992recursive}. In most of the existing works, the noise bound is assumed to be known a priori, which can be a limiting assumption as well. One of the few exceptions is \cite{bai1998convergence}, where the authors propose a way to estimate the noise bound using a probabilistic reasoning.\\
\noindent Another relevant aspect is the purpose of the identification process. Models tuned for multi-step prediction give better performance when used for simulation, e.g. in Model Predictive Control (MPC) schemes, see \cite{farina2011simulation}, \cite{lauri2010pls}. Several approaches address the multi-step-ahead identification problem, see e.g. \cite{lauri2010pls}, \cite{potts2014improving}, \cite{shook1991identification}, \cite{shook1992control}, mainly in a stochastic framework. These approaches do not provide a way to quantify the model quality in terms of bounds on the simulation error, which could be directly exploited in robust decision making.

In this paper, we resort to the Set Membership framework and consider linear systems with bounded noise where, contrary to most existing works, the bound is a-priori unknown. These settings are valid in most real-world applications, where only a rough idea of the noise intensity might be available. We present new theoretical results that allow one to estimate the noise bound from data. A preliminary version of these results has been published in \cite{LF_CDC_18}. Here, we extend the findings to the multiple-input, multiple-output case, and to the case of a predictor structure derived from a state-space representation. Moreover, we introduce a new result to estimate the worst-case simulation error bounds for any simulation horizon, up to infinity. We derive a clear link between the obtained infinite-horizon bound and the estimated noise bounds, model order, system decay rate, and horizon used in the model identification routine. The identification procedure stemming from such theoretical results is composed of four steps: 1) estimation of the noise bound; 2) estimation of the system order; 3) estimation of the impulse responses' decay rates; 4) identification of the model parameters. In this process, the concept of Feasible Parameter Set (FPS) is exploited to define the guaranteed simulation error bounds for a given model, and to constrain the parameters to be identified. We finally prove that the models derived with our procedure are guaranteed to be asymptotically stable, a property that is non-trivial to enforce during the identification phase, see \cite{cerone2011enforcing}. The estimation of the noise bound, of the model order and decay rate, and the analysis of the properties of the finite-horizon and infinite-horizon error bounds, together with the results on the asymptotic stability of the identified models, are the main novelties of our work with respect to the Set Membership literature. %Finally, the desired one-step-ahead model is identified by constrained numerical optimization. Here, we propose two optimality criteria: the minimization of the worst-case error bound over the considered simulation horizon, or the minimization of the simulation error with additional constraints to enforce a desired decay rate of the model response. 
We test the proposed procedure both in a numerical example, where the true quantities are known and the method can be evaluated in full, and in a real-world experimental application, pertaining to the roll rate dynamics of an autonomous glider.

The paper is organized as follows. Section \ref{s:probl_form} contains assumptions and problem formulation. In Section \ref{s:ms_sm_appr} the new theoretical results are presented. Section \ref{s:pred_ident} deals with the identification of the predictor parameters. Section \ref{s:ss_form} extends the obtained results to the state-space model structure with measured state. Section \ref{s:results} presents the numerical and experimental results, and Section \ref{s:conclusions} concludes the paper.

\section{Working assumptions\\ and problem formulation}
\label{s:probl_form}
\subsection{Assumptions on the system, model structure and order}
We consider a discrete time, linear time invariant (LTI) system in the form:
\begin{equation}
\label{eq:sist_desc}
\begin{aligned}
x(k+1)&=Ax(k)+Bu(k) \\
z(k)&=Cx(k),
\end{aligned}
\end{equation}
with state $x(k)\in\mathbb{R}^n$, input $u(k) \in \mathbb{R}^m$ and output $z(k)\in\mathbb{R}^q$. Here $k\in\mathbb{Z}$ denotes the discrete time variable. The output measurement $y(k) \in \mathbb{R}^q$ is affected by an additive noise $d(k)\in\mathbb{R}^q$, leading to:
\begin{equation}
\label{eq:disturbed_output}
y(k)=z(k)+d(k).
\end{equation}
We denote with $z_i(k),\,y_i(k),\,d_i(k)$, the $i$-th component of vectors $z(k),\,y(k),\,d(k)$, respectively, where $i=1,\hdots,q$.
\begin{remark}
	\label{rm:index_range_dropping}
	All of the theoretical developments and practical algorithms have to be applied to each output component individually. Therefore, for the sake of notational simplicity, the notation $i=1,\ldots,q$ will be omitted.
\end{remark}
\begin{assumption}
	\label{as:asympt_stable}
	The system \eqref{eq:sist_desc} is asymptotically stable.
\end{assumption}
\begin{assumption}
\label{as:bounded_dist}
The measurement noise and the system input are bounded. In particular:
\begin{itemize}
\item $|d_i(k)| \leq \bar{d}_{0_i},\; \forall k\in\mathbb{Z}, \; \bar{d}_0 \in \mathbb{R}^{q}.$
\item $u(k)\in\mathbb{U}\subset \mathbb{R}^m, \; \forall k \in \mathbb{Z}, \; \mathbb{U} \; \text{compact}.$
\end{itemize}
\end{assumption}
\begin{assumption}
\label{as:obs_and_reach}
The system \eqref{eq:sist_desc} is fully observable and reachable.
\end{assumption}
\noindent Assumptions \ref{as:asympt_stable} and \ref{as:bounded_dist} are common in system identification problems in real-world applications. Assumption \ref{as:obs_and_reach} is made for simplicity, as it can be relaxed by considering only the observable and controllable sub-space of the system state. Under Assumption \ref{as:obs_and_reach}, for any given $p \in \mathbb{N}$, the output equations can be written in auto-regressive form with exogenous input (ARX):
\begin{equation}
\label{eq:output_gen_pred_form}
z_i(k+p)=\psi_{i_p}(k)^T\theta_{i_p}^0,
\end{equation}
where $^T$ denotes the matrix transpose operation, and the regressor $\psi_{i_p}(k)$ is given by:
\begin{equation}\label{eq:regressor_n}
\begin{array}{rcl}
\psi_{i_p}(k)&=&\left[ Z_{i_n}^T(k) \; U_{p,n}^T(k) \right]^T\in \mathbb{R}^{n+m(n+p-1)}\\
Z_{i_n}(k)&=& \left[ z_i(k) \; z_i(k-1) \; \hdots \; z_i(k-n+1) \right]^T\in \mathbb{R}^{n}\\
U_{p,n}(k)&=&\left[u(k+p-1)^T \; \hdots\right.\\
& & \left. u(k)^T \; \hdots \; u(k-n+1)^T \right]^T\in \mathbb{R}^{m(n+p-1)}.
\end{array}
\end{equation}
In addition, $\theta_{i_p}^0 \in \mathbb{R}^{n+m(n+p-1)}$ is the vector of the true system parameters, which is given by $\theta_{i_p}^0=\left[\theta_{i_{p,z}}^{0^T} \; \theta_{i_{p,u}}^{0^T} \right]^T$, where $\theta_{i_{p,z}}^0$ consists of the parameters related to past values of the output $z_i$, and the entries of $\theta_{i_{p,u}}^0$ are the parameters related to past and future input values. For a discrete time LTI system of the form \eqref{eq:sist_desc}, if all the eigenvalues of $A$ have magnitude strictly smaller than 1 (Assumption \ref{as:asympt_stable}), then, for any initial condition $x_0$ and for any bounded input $u$ such that $\Vert u_i(k)\Vert <M, \, \forall k$, $i=1,\hdots,m$, the system outputs are bounded by
\[ \left\Vert z_i(k) \right\Vert_2 \leq \left\Vert C_i\right\Vert_2\cdot\left\Vert A^k \right\Vert_2\cdot\left\Vert x_0\right\Vert_2 + M \left\Vert C_i\right\Vert_2\cdot \sum_{j=0}^{k-1}\left\Vert A^j\right\Vert_2\cdot \left\Vert B \right\Vert_2, \]
with $i=1,\hdots,q$, $k>0$, and $\Vert A^k \Vert < L \rho^k$, where $0<\rho<1$, $L>0$, see e.g. \cite{zadeh2008linear}. Thus, under Assumption \ref{as:asympt_stable}, the system parameters are bounded by exponentially decaying trends:
\begin{equation}
\label{eq:sists_dec_bound}
\begin{aligned}
&\left\vert \theta_{i_{p,u}}^{0,(l)} \right\vert \leq L_i\rho_i^{\lceil \frac{l}{m} \rceil} \, , \; l=1,\hdots,m(n+p-1) \\
&\left\vert \theta_{i_{p,z}}^{0,(l)} \right\vert \leq L_i\rho_i^{p+l}, \; l=1,\hdots,n
\end{aligned}
\end{equation}
where $^{(l)}$ denotes the $l$-th entry of a vector, $\lceil \; \rceil$ denotes the ceiling function, and $L_i$, $\rho_i$ are scalars that depend on the system matrices in \eqref{eq:sist_desc}.\\
The one-step-ahead dynamics of the system output are then given by \eqref{eq:output_gen_pred_form} with $p=1$. For any $p>1$, the elements of the parameter vector $\theta_{i_p}^0$ are polynomial functions of the entries of $\theta_{i_1}^0$, i.e.:
\begin{equation}
\label{eq:polynomial_functions}
\theta_{i_p}^0=h_{p,n}(\theta_{i_1}^0).
\end{equation}
\noindent The explicit expressions of the polynomial functions $h_{p,n}:\mathbb{R}^{n(m+1)}\rightarrow\mathbb{R}^{n+m(n+p-1)}$ can be readily obtained by recursion of \eqref{eq:output_gen_pred_form} with $p=1$ and are omitted here for simplicity.\\

We consider a model structure given by $q$ one-step-ahead predictors, one for each output signal, written in the ARX form as:
\begin{equation}
\label{eq:1s_pred_classic_form}
\hat{z}_i(k+1)=\varphi_{i_1}(k)^T\theta_{i_1},
\end{equation}
where the regressor $\varphi_{i_p}(k)$ is given by:
\begin{equation}\label{eq:regressor_o}
\begin{array}{rcl}
\varphi_{i_p}(k)&=&\left[ Y_{i_o}^T(k) \; U_{p,o}^T(k) \right]^T\in \mathbb{R}^{o+m(o+p-1)}\\
Y_{i_o}(k)&=& \left[ y_i(k) \; y_i(k-1) \; \hdots \; y_i(k-o+1) \right]^T\in \mathbb{R}^{o}\\
U_{p,o}(k)&=&\left[u(k+p-1)^T \; \hdots\right.\\
& & \left. u(k)^T \; \hdots \; u(k-o+1)^T \right]^T\in \mathbb{R}^{m(o+p-1)}.
\end{array}
\end{equation}
In practice, $\varphi_{i_1}(k)$ is the counterpart of $\psi_{i_1}(k)$ with order $o$ (model order) instead of $n$ (system order), and corrupted by noise (compare \eqref{eq:regressor_n} and \eqref{eq:regressor_o}), while $\theta_{i_1} \in \mathbb{R}^{o(m+1)}$ denotes the vector of model parameters to be identified from data.
\begin{assumption}
	\label{as:model_order}
	The user-selected model order $o$ is such that $o\geq n$.
\end{assumption}
\noindent This assumption is needed to derive part of our theoretical results. In practice, one can initially choose a very large order to make sure that Assumption \ref{as:model_order} is satisfied, and then use our Theorem \ref{th:conv_lambda_diff_d}  and the related Procedure \ref{p:o_est_procedure} (both presented in the next section) to obtain a tighter upper-estimate of $n$.

\subsection{Multi-step predictors and assumption on data}
\label{s:multitep_pred}
In our method, we resort to the concept of multi-step predictors. For a LTI system, the multi-step predictor of the $i$-th system output, pertaining to a given horizon $p>1$, has the following general form:
\begin{equation}
\label{eq:p_pred_classic}
\hat{z}_i(k+p)=\varphi_{i_p}(k)^T\theta_{i_p},
\end{equation}
We refer to $p$ equivalently as the \emph{prediction horizon} or \emph{simulation horizon} in this paper. If the multi-step predictor is obtained by iteration of the one-step-ahead model \eqref{eq:1s_pred_classic_form}, then, similarly to \eqref{eq:polynomial_functions}, the elements of the parameter vector $\theta_{i_p}$ are polynomial functions of the entries of $\theta_{i_1}$, denoted as:
\begin{equation}
\label{eq:polynomial_model}
h_{p,o}:\mathbb{R}^{o(m+1)}\rightarrow\mathbb{R}^{o+m(o+p-1)},\,p\geq 1
\end{equation} 
and obtained by recursion of \eqref{eq:p_pred_classic} with $p=1$.
%\begin{remark}
%	\label{rm:param_padding}
%	In our theoretical analyses we need to evaluate the output given by the true system \eqref{eq:output_gen_pred_form}, defined with a regressor with $n$ past values, in presence of the regressor $\varphi_{i_p}(k)$, featuring $o$ past values. In this case, for the sake of simplicity, recalling Assumption \ref{as:model_order} and with a slight abuse of notation we implicitly consider that the parameter vector $\theta_{i_p}^0$ is appropriately padded with zero entries to match the dimension of $\theta_{i_p}$, thus keeping consistency of the expression . We will use the same convention when evaluating the difference $\theta_{i_p}^0-\theta_{i_1}$, between the parameters of the true system and those of the model.
%\end{remark}

Let us now denote with $\psi_{i_{p_o}}(k)$ the noise-free version of $\varphi_{i_p}(k)$ \eqref{eq:regressor_o}, i.e. using variable $z_i$ instead of $y_i$. Under Assumptions \ref{as:asympt_stable}-\ref{as:bounded_dist}, it follows that:
\begin{equation}\nonumber
\label{eq:psi_set_arx_case}
\psi_{i_{p_o}}(k)\in \Psi_{i_{p_o}}, \, \Psi_{i_{p_o}} \, \text{compact}, \, \forall p \in \mathbb{N}, \, \forall k \in \mathbb{Z}.
\end{equation}
Moreover, also the regressor $\varphi_{i_p}(k)$ belongs to a compact set, indicated as $\Phi_{i_p}$:
\begin{equation}
\label{eq:phi_set_arx_case}
\varphi_{i_p}(k) \in \Phi_{i_p}=\Psi_{i_{p_o}} \oplus \mathbb{D}_{i_p}, \; \forall p \in \mathbb{N}, \; \forall k \in \mathbb{Z},
\end{equation}
where $F\oplus M=\{f+m:f\in F, \, m \in M \}$ is the Minkowski sum of sets $F$, $M$, and $\mathbb{D}_{i_p}\subset \mathbb{R}^{o+m(o+p-1)}$,\small
\begin{equation}
\label{eq:D_arx_def}
\mathbb{D}_{i_p} \doteq\left\{ \left[ d_i^{(1)}, \hdots, d_i^{(o)}, 0, \hdots, 0 \right]^T: \left\vert d_i^{(l)} \right\vert \leq \bar{d}_{0_i},\,l=1,\ldots,o \right\}.
\end{equation}\normalsize
Namely, $\mathbb{D}_{i_p}$ is the set of all possible noise realizations that can affect the  system output values in $\varphi_{i_p}$. 

We assume that a finite number of measured pairs $(\tilde{y}(k),\tilde{u}(k))$ is available for the model identification task, where $\tilde{\cdot}$ is used to denote a sample of a given variable. For each simulation horizon $p$, these data form the following set of sampled regressors and corresponding output values:
\begin{equation}
\label{eq:samp_dataset_arx}\nonumber
\tilde{\mathscr{V}}_{i_p}^N \doteq \left\{ \tilde{v}_{i_p}(k)=\begin{bmatrix} \tilde{\varphi}_{i_p}(k) \\ \tilde{y}_{i_p}(k) \end{bmatrix}, \, k=1,\hdots,N \right\}, 
\end{equation}
where $\tilde{\mathscr{V}}_{i_p}^N \subset \mathbb{R}^{1+o+m(o+p-1)}$ and $\tilde{y}_{i_p}(k) \doteq \tilde{y}_i(k+p)$. Here, for simplicity and without loss of generality, we consider that the number of sampled regressors $N$ is the same for any considered value of $p$. The set $\tilde{\mathscr{V}}_{i_p}^N$ can be seen as a countable, sampled version of its continuous counterpart, $\mathscr{V}_{i_p}$:
\begin{equation}
\label{eq:cont_dataset_arx}\nonumber
\mathscr{V}_{i_p} \doteq \left\{ v_{i_p}=\begin{bmatrix} \varphi_{i_p} \\ y_{i_p} \end{bmatrix}, \, y_{i_p} \in Y_{i_p}(\varphi_{i_p}), \, \forall \varphi_{i_p} \in \Phi_{i_p} \right\},
\end{equation}
where $\mathscr{V}_{i_p} \subset \mathbb{R}^{1+o+m(o+p-1)}$, and $Y_{i_p}(\varphi_{i_p}) \subset \mathbb{R}$ represents the compact set of all the possible output values corresponding to each regressor $\varphi_{i_p} \in \Phi_{i_p}$ and to every possible noise realization $d_i:|d_i|\leq \bar{d}_{0_i}$.

Let us define the distance between $\tilde{\mathscr{V}}_{i_p}^N$ and $\mathscr{V}_{i_p}$ as:
\[
d_2 \left( \mathscr{V}_{i_p},\tilde{\mathscr{V}}_{i_p}^N \right)\doteq \underset{v_1 \in \mathscr{V}_{i_p}^{\phantom{0}}}{\textrm{max}} \underset{v_2 \in \tilde{\mathscr{V}}_{i_p}^{N}}{\textrm{min}} \left\Vert v_2-v_1 \right\Vert_2
\]
We consider the following assumption on the data set:
\begin{assumption}
\label{as:info_data}
For any $\beta>0$, there exists a value of $N<\infty$ such that
$d_2 \left( \mathscr{V}_{i_p},\tilde{\mathscr{V}}_{i_p}^N \right) \leq \beta$.
\end{assumption}
\noindent Assumption \ref{as:info_data} pertains to the informative content of the sampled data set. It means that, by adding more points to $\tilde{\mathscr{V}}_{i_p}^N$, the set of all the system trajectories of interest is densely covered. This can be seen as a persistence of excitation condition combined with a bound-exploring property of the noise signal $d$.

\subsection{Problem formulation}
We are now in position to formalize the problem addressed in this paper.
\\$\,$\\
\fbox{\parbox{0.98\columnwidth}{
		\textbf{Problem 1}. Under Assumptions \ref{as:asympt_stable}-\ref{as:info_data}, use the available data sets $\tilde{\mathscr{V}}_{i_p}^N$ to:
%		\begin{enumerate}
%			\item Estimate the noise bounds $\bar{d}_{0_i}$;
%			\item Select the model order $o\approx n$;
%			\item Estimate the parameters $L_i$, $\rho_i$ defining the system's decaying trend \eqref{eq:sists_dec_bound};
%			\item For a given value of model parameters $\theta_{i_1}$ \eqref{eq:1s_pred_classic_form}, estimate worst-case bounds on the simulation error $z(k+p)-\hat{z}_i(k+p)$ for $p\in[0,\infty)$;
%			\item Identify the model parameters $\theta_{i_1}$ exploiting the knowledge of the estimated noise bound, model order, decay rate and simulation error bounds.		
%		\end{enumerate}}}
\begin{enumerate}
			\item Estimate the noise bounds $\bar{d}_{0_i}$;
			\item Select the model order $o\approx n$;
			\item Estimate the parameters $L_i$, $\rho_i$ defining the system's decaying trend \eqref{eq:sists_dec_bound};
			\item Identify the model parameters $\theta_{i_1}$ exploiting the knowledge of the estimated quantities;%noise bound, model order and decay rate;
			\item For the model parameters $\theta_{i_1}$ obtained from the previous step, estimate worst-case bounds on the simulation error $z(k+p)-\hat{z}_i(k+p)$ for $p\in[0,\infty)$.
		\end{enumerate}}}
%$\,$\\
%In Sections \ref{s:ms_sm_appr}-\ref{s:pred_ident} we provide results and procedures to accomplish these tasks, while in Section \ref{s:ss_form} we provide an extension to models of the form \eqref{eq:sist_desc}, for the case of measured state and known system order.

\section{Estimation of the noise bound, model order, decay trend, and simulation error bounds}
\label{s:ms_sm_appr}
The key to address points 1)-4) of \textbf{Problem 1} is the analysis of the multi-step predictors of the form \eqref{eq:p_pred_classic}. At first, we will consider the multi-step predictor for each simulation horizon $p$  as an independent function, neglecting the fact that the true system \eqref{eq:output_gen_pred_form} (and the wanted model \eqref{eq:1s_pred_classic_form}) define implicitly multi-step predictors, whose parameters are linked by polynomial functions $h_{p,n}(\cdot)$ \eqref{eq:polynomial_functions} (and $h_{p,o}(\cdot)$ \eqref{eq:polynomial_model}). We will introduce such links later on, as constraints in the identification procedures of Section \ref{s:pred_ident}.\\
The starting base for our new results are the findings described in \cite{TFFS}, briefly recalled next.
\subsection{Preliminary results}
\label{ss:preliminary_res}
Under Assumptions \ref{as:asympt_stable}-\ref{as:bounded_dist}, the error between the true $p$-steps-ahead system output and its prediction \eqref{eq:p_pred_classic} is bounded for any finite $p$:
\begin{equation}\nonumber
\label{eq:global_eps_def}
\left\vert y_i(k+p)-\varphi_{i_p}^T \theta_{i_p} \right\vert \leq \bar{\varepsilon}_{i_p}(\theta_{i_p})+\bar{d}_i,
\end{equation}
where $\bar{d}_i\geq 0$ denotes an estimate of the true noise bound $\bar{d}_{0_i}$, and $\bar{\varepsilon}_{i_p}(\theta_{i_p})$ represents the global error bound related to given multi-step model parameters $\theta_{i_p}$, i.e. it holds for all the possible values of $\varphi_{i_p}$ in $\Phi_{i_p}$. 
\noindent Theoretically, the global error bound $\bar{\varepsilon}_{i_p}(\theta_{i_p})$ is the solution to the following optimization problem:
\begin{equation}
\label{eq:global_eps_calc}
\begin{array}{c}
\bar{\varepsilon}_{i_p}(\theta_{i_p}) = \min\limits_{\varepsilon\in \mathbb{R}^+} \varepsilon \\
\text{subject to} \\
\left\vert y_{i_p}-\varphi_{i_p}^T \theta_{i_p} \right\vert \leq \varepsilon +\bar{d}_i, \; \forall \left(\varphi_{i_p},y_{i_p} \right): \begin{bmatrix} \varphi_{i_p} \\ y_{i_p} \end{bmatrix} \in \mathscr{V}_{i_p}.
\end{array}
\end{equation}
Moreover, among all possible parameter values, one is interested in those that minimize the corresponding global error bound: 
\begin{equation}
\label{eq:opt_multistep}
\bar{\varepsilon}_{i_p}^0= \min\limits_{\theta_{i_p}\in \Omega_p} \bar{\varepsilon}_{i_p}(\theta_{i_p}),
\end{equation}
where the set $\Omega_p$ is a compact approximation of $\mathbb{R}^{o+m(o+p-1)}$ (e.g. an hypercube defined by $\|\theta_{i_p}\|_\infty\leq10^{100}$) introduced to technically replace $\inf$  and $\sup$ operators with $\min$ and $\max$, respectively.\\
\noindent Problems \eqref{eq:global_eps_calc}-\eqref{eq:opt_multistep} are intractable. Using the available finite set of data points, one can however compute an estimate $\underline{\lambda}_{i_p}\approx\bar{\varepsilon}_{i_p}^0$ solving the following Linear Program (LP):
\begin{equation}
\label{eq:lambda_p_calc}
\begin{array}{c}
\underline{\lambda}_{i_p} = \min
\limits_{\theta_{i_p}\in\Omega_p,\,\lambda\in\mathbb{R}^+} 
\lambda \\
\text{subject to} \\
\left\vert \tilde{y}_{i_p}-\tilde{\varphi}_{i_p}^T \theta_{i_p} \right\vert \leq \lambda +\bar{d}_i, \; \forall \left(\tilde{\varphi}_{i_p},\tilde{y}_{i_p} \right) : \begin{bmatrix} \tilde{\varphi}_{i_p} \\ \tilde{y}_{i_p} \end{bmatrix} \in \tilde{\mathscr{V}}_{i_p}^N.
\end{array}
\end{equation}
Under Assumptions \ref{as:bounded_dist}-\ref{as:info_data}, the following properties hold (see \cite{TFFS} for the derivation):
\begin{subequations}\label{eq:properties_preliminary}
	\begin{equation}
	\underline{\lambda}_{i_p} \leq \bar{\varepsilon}_{i_p}^0\label{eq:properties_preliminary_a}
	\end{equation}
\begin{equation}
\forall \eta \in (0,\bar{\varepsilon}_{i_p}^0], \; \exists N < \infty: \; \underline{\lambda}_{i_p} \geq \bar{\varepsilon}_{i_p}^0-\eta\label{eq:properties_preliminary_b}
\end{equation}
\end{subequations}
i.e. the estimated bound $\underline{\lambda}_{i_p}$ converges to $\bar{\varepsilon}_{i_p}^0$ from below. 

\subsection{Theoretical properties of the multi-step error bound}
\label{ss:prop_lambda}
In \cite{TFFS}, the results \eqref{eq:properties_preliminary} are exploited to build a FPS for any finite value of $p$ and to estimate the worst-case error of a given multi-step predictor, again for a finite simulation horizon. However, no result and/or systematic procedure to fulfill the assumptions on the noise bound (supposed to be known in \cite{TFFS}) and the model order were provided. These aspects limit the applicability of the approach, since in practice the true values of $\bar{d}_{0_i}$ and $n$ are often unknown and one has to resort to heuristics to choose $\bar{d}_i$ and $o$. We now introduce two new results that solve this issue, allowing one to derive a convergent estimate $\bar{d}_{i}\approx\bar{d}_{0_i}$, as well as estimates of the system order and, additionally, of the impulse response decay trend. The main conceptual innovation with respect to the preliminary results of \cite{TFFS} is to analyze not only each value of $\underline{\lambda}_{i_p}$ separately, but also the course of this quantity as a function of the  horizon $p$.

%Let us define $\lambda_{i_p}$ as:
%\begin{equation}
%\label{eq:lp_equality}
%\lambda_{i_p} \doteq \min\limits_{\theta_{i_p} \in \Omega_p} \max_{\left[\begin{smallmatrix} \varphi_{i_p} \\ y_{i_p} \end{smallmatrix}\right] \in \mathscr{V}_{i_p}} \left( \left\vert y_{i_p} - \varphi_{i_p}^T \theta_{i_p} \right\vert -\bar{d}_i \right).
%\end{equation}
%$\lambda_{i_p}$ can be seen as the theoretical upper bound of $\underline{\lambda}_{i_p}$ \eqref{eq:lambda_p_calc} with infinitely many data points, or alternatively as the global error bound $\bar{\varepsilon}_{i_p}^0$ \eqref{eq:opt_multistep} but using the estimated noise bound $\bar{d}_i$ instead of the true one, $\bar{d}_{0_i}$.
\begin{theorem}
\label{th:conv_lambda_diff_d}
If Assumptions \ref{as:bounded_dist}-\ref{as:info_data} hold, then, for any arbitrarily small $\eta>0$, $\exists N < \infty$ such that
\begin{equation}\label{eq:thm1_convergence}
\left( \bar{d}_{0_i} - \bar{d}_i \right)-\eta \leq \lim_{p \to \infty}{\underline{\lambda}_{i_p}} \leq \left( \bar{d}_{0_i} - \bar{d}_i \right).
\end{equation}
%\begin{equation}\label{eq:thm1_convergence}
%\underline{\lambda}_{i_p} \xrightarrow{p \to \infty} \left( \bar{d}_{0_i} - \bar{d}_i \right)-\eta
%\end{equation}
\end{theorem}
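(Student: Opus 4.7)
The plan is to prove the statement in three stages: first derive a lower bound $\bar{\varepsilon}_{i_p}^0\geq\bar{d}_{0_i}-\bar{d}_i$ that holds uniformly in $p$, then an upper bound on $\bar{\varepsilon}_{i_p}^0$ that decays to the same value as $p\to\infty$, and finally transfer both bounds from the ideal quantity $\bar{\varepsilon}_{i_p}^0$ in \eqref{eq:opt_multistep} to its data-driven counterpart $\underline{\lambda}_{i_p}$ via the preliminary results \eqref{eq:properties_preliminary}.

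For the lower bound, I would exploit the structure of $\mathscr{V}_{i_p}$: for every regressor $\varphi_{i_p}\in\Phi_{i_p}$, the admissible outputs $Y_{i_p}(\varphi_{i_p})$ include both extreme values $z_{i_p}\pm\bar{d}_{0_i}$ produced by flipping the sign of the future noise $d_i(k+p)$. Writing the constraint of \eqref{eq:global_eps_calc} at each of these two points and adding the two resulting $|\cdot|$ inequalities (triangle inequality) forces $2\bar{d}_{0_i}\leq 2(\varepsilon+\bar{d}_i)$ for every feasible $\varepsilon$ and every $\theta_{i_p}\in\Omega_p$; hence $\bar{\varepsilon}_{i_p}(\theta_{i_p})\geq\bar{d}_{0_i}-\bar{d}_i$ and, taking the min over $\theta_{i_p}$, $\bar{\varepsilon}_{i_p}^0\geq\bar{d}_{0_i}-\bar{d}_i$ for all $p$.

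For the upper bound, I would instantiate \eqref{eq:global_eps_calc} at the true multi-step parameter vector $\theta_{i_p}^0$ of \eqref{eq:output_gen_pred_form}, zero-padded from dimension $n+m(n+p-1)$ up to $o+m(o+p-1)$ (which is legitimate since $o\geq n$ by Assumption \ref{as:model_order} and $\Omega_p$ is taken arbitrarily large). Because the noise-free regressor contracted with the padded $\theta_{i_p}^0$ reproduces $z_i(k+p)$ exactly, the residual collapses to $y_{i_p}-\varphi_{i_p}^T\theta_{i_p}^0=d_i(k+p)-\sum_{l=1}^{n}\theta_{i_{p,z}}^{0,(l)}d_i(k-l+1)$. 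Substituting the decay estimate \eqref{eq:sists_dec_bound} $|\theta_{i_{p,z}}^{0,(l)}|\leq L_i\rho_i^{p+l}$ and summing the geometric series yields $|y_{i_p}-\varphi_{i_p}^T\theta_{i_p}^0|\leq\bar{d}_{0_i}(1+\gamma_p)$ with $\gamma_p=L_i\rho_i^{p+1}(1-\rho_i^n)/(1-\rho_i)\to 0$ as $p\to\infty$. Consequently $\bar{\varepsilon}_{i_p}^0\leq\bar{\varepsilon}_{i_p}(\theta_{i_p}^0)\leq(\bar{d}_{0_i}-\bar{d}_i)+\bar{d}_{0_i}\gamma_p$, and the two bounds combined give $\lim_{p\to\infty}\bar{\varepsilon}_{i_p}^0=\bar{d}_{0_i}-\bar{d}_i$.

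The transfer to $\underline{\lambda}_{i_p}$ then goes through \eqref{eq:properties_preliminary}: property \eqref{eq:properties_preliminary_a} gives the upper half $\underline{\lambda}_{i_p}\leq\bar{\varepsilon}_{i_p}^0$, whence $\limsup_{p\to\infty}\underline{\lambda}_{i_p}\leq\bar{d}_{0_i}-\bar{d}_i$; property \eqref{eq:properties_preliminary_b} gives the lower half $\underline{\lambda}_{i_p}\geq\bar{\varepsilon}_{i_p}^0-\eta\geq\bar{d}_{0_i}-\bar{d}_i-\eta$ for $N$ sufficiently large. The main technical obstacle I foresee is precisely the order of quantifiers in this last step: \eqref{eq:properties_preliminary_b} a priori yields an $N$ that may depend on $p$ (recall that the ambient dimension of $\mathscr{V}_{i_p}$ grows linearly with $p$), whereas the theorem posits a single $N$ under which the $p\to\infty$ limit is squeezed into the desired interval. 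Closing this gap will require a careful compactness argument leveraging Assumption \ref{as:info_data}, ensuring that one choice of $N$ delivers the $\eta$-slack uniformly for all sufficiently large $p$; this is where most of the care beyond the algebraic manipulations above will be needed.
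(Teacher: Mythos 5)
Your proposal is correct and shares the paper's overall skeleton — reduce the statement to the limit of $\bar{\varepsilon}_{i_p}^0$ and then transfer to $\underline{\lambda}_{i_p}$ via \eqref{eq:properties_preliminary_a}--\eqref{eq:properties_preliminary_b} — but the way you establish $\bar{\varepsilon}_{i_p}^0\to\bar{d}_{0_i}-\bar{d}_i$ is genuinely different. The paper constructs an explicit worst-case element of $\mathscr{V}_{i_p}$ (past-output noises at magnitude $\bar{d}_{0_i}$ with signs aligned to $\theta^0_{i_{p,z}}$, future noise at $+\bar{d}_{0_i}$), argues that the minimizing $\theta_{i_p}$ annihilates the term $\underline{\varphi}_{i_p}^T(\theta_{i_p}^0-\theta_{i_p})$, and thereby obtains the exact identity $\bar{\varepsilon}_{i_p}^0=\Vert\theta^0_{i_{p,z}}\Vert_1\bar{d}_{0_i}+\bar{d}_{0_i}-\bar{d}_i$, whose first term vanishes by \eqref{eq:sists_dec_bound}. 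You instead sandwich: the two-point sign-flip argument on the future noise gives the uniform lower bound $\bar{\varepsilon}_{i_p}(\theta_{i_p})\geq\bar{d}_{0_i}-\bar{d}_i$ for \emph{every} $\theta_{i_p}$, and evaluation at the zero-padded true parameter gives the decaying upper bound. Your route is arguably cleaner — it sidesteps the paper's somewhat delicate claim that a single worst-case data point determines the minimizer exactly (a min over $\theta$ must respect all of $\mathscr{V}_{i_p}$ simultaneously, so that step really only yields an upper bound without further argument) — at the cost of not producing the exact expression $\Vert\theta^0_{i_{p,z}}\Vert_1\bar{d}_{0_i}$ that the paper reuses in Corollary \ref{c:lambda_rate_conv}. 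Two small remarks: both proofs tacitly invoke Assumption \ref{as:asympt_stable} through \eqref{eq:sists_dec_bound}, even though it is not listed in the theorem's hypotheses; and the quantifier issue you flag at the end ($N$ from \eqref{eq:properties_preliminary_b} depending on $p$) is real, but it is present and unaddressed in the paper's proof as well, so it is not a defect of your argument relative to the paper's.
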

\begin{proof}
See the Appendix.
\end{proof}

\begin{corollary}
\label{c:lambda_rate_conv}
If Assumptions \ref{as:bounded_dist}-\ref{as:info_data} hold, and if the estimated noise bound is correctly chosen as $\bar{d}_i=\bar{d}_{0_i}$, then, for any arbitrarily small $\eta>0$, $\exists N < \infty$ such that
\begin{equation}\label{eq:corollary1_bound}\nonumber
\underline{\lambda}_{i_p} = \left\Vert \theta^0_{i_p} \right\Vert_1 \bar{d}_{0_i}-\eta \leq n \bar{d}_{0_i} L_i \rho_i^{p+1}.
\end{equation}
\end{corollary}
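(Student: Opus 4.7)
The plan is to combine the convergence of $\underline{\lambda}_{i_p}$ to the continuous optimum $\bar{\varepsilon}_{i_p}^0$ (given by \eqref{eq:properties_preliminary_a}--\eqref{eq:properties_preliminary_b}) with a closed-form evaluation of $\bar{\varepsilon}_{i_p}^0$ at the true parameter vector, and finally invoke the decay bounds \eqref{eq:sists_dec_bound} to estimate it.

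First, I would set $\bar{d}_i = \bar{d}_{0_i}$ and rely on \eqref{eq:properties_preliminary} to obtain $\bar{\varepsilon}_{i_p}^0 - \eta \leq \underline{\lambda}_{i_p} \leq \bar{\varepsilon}_{i_p}^0$ for sufficiently large $N$, so that it suffices to compute $\bar{\varepsilon}_{i_p}^0$ explicitly. Using the decomposition $\varphi_{i_p} = \psi_{i_{p_o}} + \delta$ with $\delta \in \mathbb{D}_{i_p}$ provided by \eqref{eq:phi_set_arx_case}, together with the noise-free identity $\psi_{i_{p_o}}^T \theta_{i_p}^0 = z_i(k+p)$, evaluating the objective of \eqref{eq:global_eps_calc} at $\theta = \theta_{i_p}^0$ reduces the residual to $d_i(k+p) - \delta^T \theta_{i_p}^0$. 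Since the nonzero entries of $\delta$ multiply only the output block $\theta_{i_{p,z}}^0$, choosing the output noise $d_i(k+p)$ and each entry of $\delta$ with signs aligned to those of $\theta_{i_{p,z}}^0$ attains a worst-case residual of $\bar{d}_{0_i}(1 + \|\theta_{i_{p,z}}^0\|_1)$. Subtracting $\bar{d}_{0_i}$ yields $\bar{\varepsilon}_{i_p}(\theta_{i_p}^0) = \|\theta_{i_{p,z}}^0\|_1\,\bar{d}_{0_i}$, and therefore $\bar{\varepsilon}_{i_p}^0 \leq \|\theta_{i_{p,z}}^0\|_1\,\bar{d}_{0_i}$.

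The main obstacle is the matching lower bound $\bar{\varepsilon}_{i_p}^0 \geq \|\theta_{i_{p,z}}^0\|_1\,\bar{d}_{0_i}$, needed for the equality claim in the statement. I would establish it by repeating the same maximization for an arbitrary $\theta$ to obtain $\bar{\varepsilon}_{i_p}(\theta) = \max_{\psi \in \Psi_{i_{p_o}}}|\psi^T(\theta_{i_p}^0 - \theta)| + \bar{d}_{0_i}\,\|\theta_{i_{p,z}}\|_1$, and then exploiting the richness of $\Psi_{i_{p_o}}$ guaranteed by Assumption~\ref{as:info_data} to argue that any reduction of $\|\theta_{i_{p,z}}\|_1$ below $\|\theta_{i_{p,z}}^0\|_1$ is more than compensated by the growth of the trajectory-mismatch term $\max_\psi|\psi^T(\theta_{i_p}^0-\theta)|$. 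The delicate point is handling the coupling of these two convex contributions inside a single maximum, which must be controlled using the fact that the output range dominates the noise-related part of the cost.

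Finally, applying \eqref{eq:sists_dec_bound} to the output coefficients yields $\|\theta_{i_{p,z}}^0\|_1 \leq \sum_{l=1}^{n} L_i \rho_i^{p+l} = L_i \rho_i^{p+1}(1-\rho_i^n)/(1-\rho_i) \leq n L_i \rho_i^{p+1}$, where the last step uses $1+\rho_i+\cdots+\rho_i^{n-1} \leq n$ for $\rho_i \in (0,1)$. Multiplying by $\bar{d}_{0_i}$ and chaining with the previous bounds gives the corollary's right-hand inequality, while the $-\eta$ term absorbs the LP-to-continuous approximation gap supplied by \eqref{eq:properties_preliminary_b}.
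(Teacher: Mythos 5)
Your proposal follows essentially the same route as the paper: both pass from $\underline{\lambda}_{i_p}$ to $\bar{\varepsilon}_{i_p}^0$ via \eqref{eq:properties_preliminary}, reduce the residual at $\theta_{i_p}^0$ to the noise-driven term $\left\Vert \theta_{i_{p,z}}^0 \right\Vert_1 \bar{d}_{0_i}$ by aligning the extremal noise realization with the signs of $\theta_{i_{p,z}}^0$, and then invoke the decay bounds \eqref{eq:sists_dec_bound} to obtain $n\bar{d}_{0_i}L_i\rho_i^{p+1}$. The lower-bound direction you flag as delicate is precisely the step the paper itself treats informally (it asserts, via Assumption \ref{as:model_order}, that the optimal $\theta_{i_p}$ zeroes the trajectory-mismatch term at the extremal data point), so your sketch is no less complete than the published argument, and the upper bound you do establish already suffices for the inequality $\underline{\lambda}_{i_p}\leq n\bar{d}_{0_i}L_i\rho_i^{p+1}$ that is actually used downstream.
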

\begin{proof}
See the Appendix.
\end{proof}
\begin{remark}
\label{rm:consequence_of_th_lambda}
Theorem \ref{th:conv_lambda_diff_d} and Corollary \ref{c:lambda_rate_conv} imply three consequences useful to estimate the noise bound and system order:
\begin{enumerate}
	\item With a large enough data set, the estimated bound $\underline{\lambda}_{i_p}$ \eqref{eq:lambda_p_calc} converges, as $p$ increases, to the difference between the true noise bound, $\bar{d}_{0_i}$, and the estimated one, $\bar{d}_i$. We will use this result to estimate $\bar{d}_{0_i}$;
	\item When $\bar{d}_i=\bar{d}_{0_i}$ and $o<n$ (i.e. Assumption \ref{as:model_order} is not met), then $\underline{\lambda}_{i_p}$ converges (besides a quantity $\eta$ that can be made arbitrarily small with a larger data set) to a non-zero value as $p \to \infty$, due to model order mismatch (see the proof of Theorem \ref{th:conv_lambda_diff_d} for more details). We will exploit this property to estimate the system order;
	\item Assuming the noise bound is chosen as $\bar{d}_i\simeq\bar{d}_{0_i}$, then the estimated bound $\underline{\lambda}_{i_p}$ converges to zero as $p\to \infty$ with the same decay trend as that of the true system parameters, dictated by the system dominant eigenvalues. We will exploit this property to estimate the system decay rate.
\end{enumerate}
\end{remark}
\subsection{Estimation of noise bound, system order and decay trend}
\label{ss:estim_d_o_rho}
We propose three procedures to estimate the noise bound, system order and the decay trend, respectively. This information will be used in Section \ref{ss:FPS_and_tau_def} to define the FPS for any finite $p$ and the guaranteed simulation error bound of any predictor up to a finite $p$.\\
We start by estimating $\bar{d}_{0_i}$ resorting to Theorem \ref{th:conv_lambda_diff_d} (see also point 1) of Remark \ref{rm:consequence_of_th_lambda}):
\begin{procedure}
\caption{Estimation of $\bar{d}_0$}
\label{p:d_bar_est_procedure}
Choose a large value as initial guess of $o$. Then, for all $i=1,\ldots,q$, carry out the following steps:
\begin{enumerate}
\item Initialize $\bar{d}_i$ with a value small enough to ensure $\bar{d}_i<\bar{d}_{0_i}$ (e.g. $\bar{d}_i\simeq0$);
\item Compute $\underline{\lambda}_{i_p}$ \eqref{eq:lambda_p_calc} for increasing $p$ values, until it converges to a constant quantity $e_{d_i}\simeq(\bar{d}_{0_i}-\bar{d}_i)$ as $p \to \infty$; \item Correct the initial guess of $\bar{d}_i$ by adding $e_{d_i}$;
\item Verify that $\underline{\lambda}_{i_p} \xrightarrow{p \to \infty}0$ with the new value of $\bar{d}_i$;
\end{enumerate}
Take the resulting vector $\bar{d}=[\bar{d}_1,\hdots,\bar{d}_q]^T$ as estimate of the true one, $\bar{d}_0$.
\end{procedure} 
\\
\noindent This addresses point 1) of \textbf{Problem 1}. After completing Procedure \ref{p:d_bar_est_procedure}, we can compute a finite simulation horizon value  $\bar{p}_i$ such that:
\begin{equation}
\label{eq:bar_p}
\begin{array}{c}
\bar{p}_i=\min\limits_{\bar{p}\in\mathbb{N}}\bar{p}\\
\text{subject to}\\
\underline{\lambda}_{i_p}<\delta,\, \forall p \geq \bar{p}
\end{array}
\end{equation}
where $\delta\approx 0$ is a suitable tolerance, e.g. $10^{-8}$, to account for the asymptotic behavior of $\underline{\lambda}_{i_p}$ (see Theorem \ref{th:conv_lambda_diff_d}). This tolerance can be used to check the convergence of $\underline{\lambda}_{i_p}$ in step 5 of Procedure \ref{p:d_bar_est_procedure}, i.e. to verify that $\exists \tilde{p}: \underline{\lambda}_{i_p}<\delta \; \forall p>\tilde{p}$ \\
Exploiting the values of $\bar{p}_i$, we can then estimate the system order resorting to the observation of point 2) of Remark \ref{rm:consequence_of_th_lambda}:
\begin{procedure}
\caption{Estimation of $n$}
\label{p:o_est_procedure}
\begin{enumerate}
\item Set $\bar{d}_i$ to the values resulting from Procedure \ref{p:d_bar_est_procedure}, and compute $\bar{p}_i$ as in \eqref{eq:bar_p};
\item Set a large starting value of $o$;
\item Gradually decrease $o$, recalculating all the $\underline{\lambda}_{i_p}$, until a value of $o$ is found, such that 
$\exists p>\bar{p}_i : \underline{\lambda}_{i_p}>\delta$, with $\delta$ used in \eqref{eq:bar_p}. Denote as $\underline{o}$ such a value.
\item Set the model order as $o=\underline{o}+1$.
\end{enumerate}
\end{procedure}
\\ \noindent This addresses point 2) of \textbf{Problem 1}. At the end of Procedure \ref{p:o_est_procedure}, one shall choose the largest value of $o$ among all $i=1,\ldots,q$.
Finally, we estimate the system decay trend from that of $\underline{\lambda}_{i_p}$, exploiting observation 3) of Remark \ref{rm:consequence_of_th_lambda}, thus addressing also point 3) of Problem 1.
\begin{procedure}
	\caption{Estimation of $L_i$ and $\rho_i$}
	\label{p:decay_est_procedure}
	\begin{enumerate}
		\item Take $\bar{d}_i$, $\bar{p}_i$, and $o$ resulting from Procedures \ref{p:d_bar_est_procedure}-\ref{p:o_est_procedure};
		\item Compute two scalars, $L_i',\,\hat{\rho}_i$ as:
		\begin{equation}
\label{eq:est_L_rho_optprob}
\begin{aligned}
\left[L_i', \hat{\rho}_i \right] = &\text{arg} \min_{L_i',\hat{\rho}_i} \left\Vert \boldsymbol{f}_{i_{\lambda}} - \boldsymbol{g}_{i_{L\rho}} \right\Vert_2^2 \\
&\text{subject to} \\
&\boldsymbol{g}_{i_{L\rho}} \succeq \boldsymbol{f}_{i_{\lambda}} \\
&L_i'>0 \\
&0< \hat{\rho}_i <1
\end{aligned}
\end{equation}
where $\boldsymbol{f}_{i_{\lambda}} \doteq [\underline{\lambda}_{i_1} \; \cdots \; \underline{\lambda}_{i_{\bar{p}_i}}]^T$, $\boldsymbol{g}_{i_{L\rho}}\doteq [g_{i_{L\rho}}(1) \; \cdots \; g_{i_{L\rho}}(\bar{p}_i) ]^T$, $g_{i_{L\rho}}(p)=L_i' \hat{\rho}_i^{p+1}$, and $\succeq$ denotes element-wise inequalities.
\item Compute $\hat{L}_i$ as (from Corollary \ref{c:lambda_rate_conv}):
\begin{equation}
\label{eq:L_eff_arx}
\hat{L}_i=\dfrac{L_i'}{o\bar{d}_i},
\end{equation}
\item Set $\hat{L}_i,\,\hat{\rho}_i$ as estimates of $L_i$ and $\rho_i$, respectively.
\end{enumerate}
\end{procedure}
\\ \noindent Problem \eqref{eq:est_L_rho_optprob} is always feasible, since one can always choose large-enough values of $L_i'$ to satisfy its constraints. Moreover, the cost function results to be convex inside the feasible set, as it can be shown by computing its curvature and checking that it is positive for feasible $(L_i',\hat{\rho}_i)$ pairs. %Thus, problem \eqref{eq:est_L_rho_optprob} can be readily solved for a global minimum using standard NLP solvers.
%\begin{remark}
%Thanks to the results demonstrated in \cite{TFFS}, we have guarantees that the estimated error bound $\underline{\lambda}_{i_p}$ converges to the true bound $\lambda_{i_p}$ as the number of data points $N$ increases, meaning that also $\underline{\lambda}_{i_p}$ undergoes by the properties resulting from Theorem \ref{th:conv_lambda_diff_d} and Corollary \ref{c:lambda_rate_conv}. This provides us with the possibility to estimate the noise bound amplitude, the system order and the decay rate also with a finite data set. \qed
%\end{remark}
%
\subsection{Feasible Parameter Sets and finite-horizon simulation error bound}
\label{ss:FPS_and_tau_def}

The quantities estimated so far are instrumental to build the Feasible Parameter Set (FPS) for any finite simulation horizon $p$. Namely, such sets contain all possible multi-step predictor parameters $\theta_{i_p}$ that are consistent with the available data set, up to the tolerance given by the global error bound $\bar{\varepsilon}_{i_p}^0$ and noise bound $\bar{d}_{0_i}$, and the other available information on the system at hand. Since the  computed bound $\underline{\lambda}_{i_p}$ is lower than $\bar{\varepsilon}_{i_p}^0$, due to the use of a finite data set (property \eqref{eq:properties_preliminary_a}), it is customary to employ a scaling factor $\alpha>1$ to estimate the global error bound:
\begin{equation}
\label{eq:eps_hat_def}
\hat{\bar{\varepsilon}}_{i_p} = \alpha \underline{\lambda}_{i_p}, \; \alpha>1.
\end{equation}
We can now define, for the $p$-steps-ahead predictor of the $i$-th system output, the set $\Theta_{i_p}$ of parameter values that are consistent with the measured data, and with the estimated noise bound and global error bound. Several works in the Set Membership literature prefer to lower the computational effort resorting to outer approximation of the FPS, e.g. via intervals \cite{sun2003recursive}, ellipsoid \cite{bertsekas1971recursive,durieu2001multi,filippova1996ellipsoidal}, parallelotopes \cite{chisci1996recursive,vicino1996sequential}, zonotopes \cite{alamo2005guaranteed,bravo2006bounded,combastel2003state,wang2018zonotope}, or constrained zonotopes \cite{scott2016constrained}. \cite{walter1989exact} proposes a recursive exact polytopic representation, able to cope also with time-varying systems. Here, we decided to adopt an exact description of the FPS by defining it as a polytope using an inequality description (H-representation):
\begin{equation}
\label{eq:FPS_gen_def}\nonumber
\begin{aligned}
\Theta_{i_p} \doteq \bigg\{ \theta_{i_p} &: \left\vert \tilde{y}_{i_p} - \tilde{\varphi}_{i_p}^T \theta_{i_p} \right\vert \leq \hat{\bar{\varepsilon}}_{i_p}+\bar{d}_i, \\
&\quad \forall \left(\tilde{\varphi}_{i_p},\tilde{y}_{i_p} \right) : \begin{bmatrix} \tilde{\varphi}_{i_p} \\ \tilde{y}_{i_p} \end{bmatrix} \in \tilde{\mathscr{V}}_{i_p}^N \bigg\}.
\end{aligned}
\end{equation}
The set $\Theta_{i_p}$, if bounded, is a polytope with at most $2N$ facets. If $\Theta_{i_p}$ is unbounded, then this indicates that the data collected from the system are not informative enough, and new data should be acquired. In \cite{TFFS}, the set $\Theta_{i_p}$ was taken as FPS for the predictors pertaining to the horizon $p$. Here, we provide a further refinement by adding the constraints on the estimated decay trend obtained in Section \ref{ss:estim_d_o_rho}. Let us define the polytope:
\begin{equation}
\label{eq:gamma_set_def_arx}\nonumber
\begin{aligned}
\Gamma_{i_p}\doteq \bigg\{ \theta_{i_p}: &\left\vert \theta_{i_{p,z}}^{(l)} \right\vert \leq \hat{L}_i \hat{\rho}_i^{p+l}, \; l\in [1,o], \\
&\wedge \left\vert \theta_{i_{p,u}}^{(l)} \right\vert \leq \hat{L}_i \hat{\rho}_i^{\lceil \frac{l}{m} \rceil}, \; l\in [1,m(o+p-1)] \bigg\}.
\end{aligned}
\end{equation}
Then, we define the Feasible Parameter Sets as:
\begin{equation}
\label{eq:FPS_with_decay}
\Theta_{i_p}^{L\rho}\doteq\Theta_{i_p} \cap \Gamma_{i_p}.
\end{equation}

Note that this new FPS is always compact, since $\Gamma_{i_p}$ is. The FPS is used to derive the worst-case simulation error bound obtained by a given predictor with parameters $\theta_{i_p}$:
\begin{equation}
\label{eq:tau_global_def}
\tau_{i_p}(\theta_{i_p})= \max_{\varphi_{i_p} \in \Phi_{i_p}^{\phantom{0}}} \max_{\theta \in \Theta_{i_p}^{L\rho}} \left\vert \varphi_{i_p}^T \left(\theta - \theta_{i_p} \right) \right\vert + \hat{\bar{\varepsilon}}_{i_p}.
\end{equation}
Namely, this bound is the  worst-case absolute difference between the output $\hat{z}_i(k+p)=\varphi_{i_p}^T(k) \theta_{i_p}$, predicted using the parameters $\theta_{i_p}$, and the one predicted by any other parameter vector in the FPS, plus the worst-case prediction error $\hat{\bar{\varepsilon}}_{i_p}$ related to all $\theta_{i_p}\in\Theta_{i_p}^{L\rho}$. In a way similar to $\bar{\varepsilon}_{i_p}^0$, it is not possible to exactly compute the bound  \eqref{eq:tau_global_def} using a finite data set. Thus, we introduce an estimate  $\hat{\tau}_{i_p}(\theta_{i_p})$, which, under Assumption \ref{as:info_data}, converges to $\tau_{i_p}(\theta_{i_p})$ from below as the number of data points increases, see \cite{TFFS}. Such an estimate is then inflated by a scalar $\gamma>1$ to account for the uncertainty due to the usage of a finite data set:
\begin{equation}
\label{eq:tau_hat_global_def}
\hat{\tau}_{i_p}(\theta_{i_p})=\gamma \left( \max_{\tilde{\varphi}_{i_p}\in \tilde{\mathscr{V}}_{i_p}^N} \max_{\theta \in \Theta_{i_p}^{L\rho}} \left\vert \tilde{\varphi}_{i_p}^T \left(\theta - \theta_{i_p} \right) \right\vert \right) + \hat{\bar{\varepsilon}}_{i_p}, \; \gamma>1.
\end{equation}
The estimation of the bound defined by \eqref{eq:tau_hat_global_def}, corresponding to point 5) of \textbf{Problem 1}, will then be performed on the models identified using the approaches proposed in Section \ref{s:pred_ident}. Note that \eqref{eq:tau_hat_global_def} can be recast as an LP, after the preliminary solution of $2N$ LPs which can be parallelized. If the estimated error bounds $\hat{\bar{\varepsilon}}_{i_p}$ and $\hat{\tau}_{i_p}(\theta_{i_p})$ are larger than the corresponding theoretical values $\bar{\varepsilon}_{i_p}^0$ and $\tau_{i_p}(\theta_{i_p})$, respectively, and the estimated decay rate parameters are such that $\hat{\rho} \in [\rho,1)$ and $\hat{L}_i \geq L_i$, then it is easy to show that the multi-step predictor $\theta_{i_p}^0$, obtained from the true system and possibly appropriately padded with zero entries if $o>n$, belongs to the FPS $\Theta_{i_p}^{L\rho}$. In this case, by construction, the bound $\hat{\tau}_{i_p}(\theta_{i_p})$ is such that:
\begin{subequations}\label{eq:z_under_bound_tau}
\begin{eqnarray}
|z_i(k+p)-\hat{z}_i(k+p) | &\leq& \hat{\tau}_{i_p}(\theta_{i_p})\label{eq:sim_bound_z}\\
|y_i(k+p)-\hat{z}_i(k+p) | &\leq& \hat{\tau}_{i_p}(\theta_{i_p})+\bar{d}_i\label{eq:sim_bound_y}
\end{eqnarray}
\end{subequations}
i.e. it is the desired simulation error bound for the considered finite horizon $p$.

The parameters $\alpha$ in \eqref{eq:eps_hat_def} and $\gamma$ in \eqref{eq:tau_hat_global_def} essentially express how much we are confident in the informative content of the data set. A ``large'' value of $\alpha$ might produce an overly conservative error bound $\hat{\bar{\varepsilon}}_{i_p}$ and, consequently, larger FPSs, while a choice of $\alpha$ close to 1  might produce an error bound that could be invalidated by future data, if the available data set has a poor informative content. Similarly, a ``large'' $\gamma$ might give a conservative bound $\hat{\tau}_{i_p}$.\\
\begin{remark}\label{rm:bounds_conservativeness}
In a real application, one will never know whether the scaling factors $\alpha,\gamma$ are too conservative. Conversely, it is easy to understand when these factors are too small, by checking whether the FPS is empty for any $p$. If this happens, for example if one chooses a too small $\alpha$ value, then the prior assumptions and/or estimated bounds are invalidated by data. Thus, verifying that all the FPSs are non-empty (which is an easy task since they are all polytopes) is a way to check the informative content of our data set and the choice of parameter $\alpha$.  This check can be carried out using new data collected in a validation experiment, or in real-time if the FPSs and system model are to be updated on-line. A similar reasoning applies to the bound $\hat{\tau}_{i_p}(\theta_{i_p})$ and scalar $\gamma$: conservativeness can be evaluated by checking the bound against new measured data and evaluating whether the simulation error magnitude ever violates  $\hat{\tau}_{i_p}(\theta_{i_p})$ by more than $\bar{d}_i$ (see \eqref{eq:sim_bound_y}).
\end{remark}

\subsection{Infinite-horizon simulation error bound}
\label{s:tau_inf_intro}
The error bound \eqref{eq:tau_hat_global_def} requires the computation of the FPSs for each horizon $p$  of interest, potentially up to a very large value. Since each FPS is a polytope whose complexity generally grows with the number of available data, the computation of a large number of bounds $\hat{\tau}_{i_p}$ can become impractical. To solve this problem, in this section we present new results that allow one to estimate the simulation error bound for any future horizon, beyond a (sufficiently large) finite value $\bar{p}$. In particular, we propose an iterative expression to compute the simulation error bound for $p>\bar{p}$, based on the previous computation of the bounds $\hat{\tau}_{i_p}$ for $p=1,\ldots,\bar{p}$. Furthermore, we provide results indicating how the value of $\bar{p}$ should be chosen in order to keep the computational effort at a minimum, and obtain a bound which is non-divergent with $p$ and not excessively conservative. Before proceeding further, the following remark is in order.
\begin{remark}
\label{rm:model_inf_tau}
The results presented in the remainder of this section are derived considering model parameters that satisfy the conditions $h_{p,o}(\theta_{i_1}) \in \Gamma_{i_p}, \; \forall p \in [2,\bar{p}]$. Later on, in Section \ref{s:pred_ident}, we will include explicitly such conditions in the identification procedure, so that the computed models will always enjoy this property. This establishes a connection between the derived theoretical results and the proposed computational methods to identify a model.
\end{remark}

Given the multi-step predictors described by \eqref{eq:p_pred_classic}, and having computed the error bounds defined by \eqref{eq:tau_hat_global_def} up to $\bar{p}$, if $h_{p,o}(\theta_{i_1}) \in \Gamma_{i_p}, \; \forall p \in [2,\bar{p}]$, the simulation error at horizon $\bar{p}+j$, $j>1$, is such that:
\begin{equation}
\label{eq:err_bound_arx_pgrande}
\begin{array}{l}
\left\vert z_i(k+\bar{p}+j) - \hat{z}_i(k+\bar{p}+j) \right\vert \leq \\
\hat{\tau}_{i_{\bar{p}}}(\theta_{i_{\bar{p}}})+ \sum\limits_{m=1}^{\min\{j,o\}} \left( \hat{\tau}_{i_{j-m+1}}(\theta_{i_{j-m+1}}) +\bar{d}_i \right)\hat{L}_i\hat{\rho}_i^{\bar{p}+m}.
\end{array}
\end{equation}
See the Appendix for a derivation. Then, considering that
$\hat{L}_i\hat{\rho}_i^{\bar{p}+1}>\hat{L}_i\hat{\rho}_i^{\bar{p}+2}>\hdots>\hat{L}_i\hat{\rho}_i^{\bar{p}+o}$,
and that
$
\sum\limits_{m=1}^{\min\{j,o\}}\hat{L}_i\hat{\rho}_i^{\bar{p}+m} \leq o\hat{L}_i\hat{\rho}_i^{\bar{p}+1},
$
we can derive an over-estimate of the simulation error bound $\hat{\tau}_{i_{\bar{p}+j}}$ as:
\begin{equation}
\label{eq:tau_arx_pgrande_upbound}
\begin{array}{l}
\left\vert z_i(k+\bar{p}+j) - \hat{z}_i(k+\bar{p}+j) \right\vert \leq
\hat{\tau}_{i_{\bar{p}+j}}(\theta_{i_{\bar{p}+j}})\\ \leq \hat{\tau}_{i_{\bar{p}}}(\theta_{i_{\bar{p}}})+ \left( \hat{\tau}_{i_{max_{\{j,o\}}}}+\bar{d}_i \right)o\hat{L}_i\hat{\rho}_i^{\bar{p}+1},
\end{array}
\end{equation}
where $\hat{\tau}_{i_{max_{\{j,o\}}}}=\max\{\hat{\tau}_{i_{j-o}}(\theta_{i_{j-o}}), \hdots, \hat{\tau}_{i_j}(\theta_{i_j}) \}$. \\
Note that, if $j\geq \bar{p}+1$, the term $\hat{\tau}_{i_j}(\theta_{i_j})$ is not computed using \eqref{eq:tau_hat_global_def}, but resorting to \eqref{eq:tau_arx_pgrande_upbound}. For example, when $j \in (\bar{p}, 2\bar{p}]$, the simulation error bound $\hat{\tau}_{i_{\bar{p}+j}}(\theta_{i_{\bar{p}+j}})$ is bounded as:
\begin{equation}
\label{eq:tau_arx_pgrande_iter1}
\resizebox{1\columnwidth}{!}{ $
\begin{aligned}
\hat{\tau}&_{i_{\bar{p}+j}}(\theta_{i_{\bar{p}+j}}) \leq \hat{\tau}_{i_{\bar{p}}}(\theta_{i_{\bar{p}}})+ \left( \hat{\tau}_{i_{max_{\{j,o\}}}}+\bar{d}_i \right) o\hat{L}_i\hat{\rho}_i^{\bar{p}+1} \leq \hat{\tau}_{i_{\bar{p}}}(\theta_{i_{\bar{p}}}) \\
&+ \left( \hat{\tau}_{i_{\bar{p}}}(\theta_{i_{\bar{p}}})+ \left( \hat{\tau}_{i_{max_{\{l,2o\}}}}+\bar{d}_i \right)o\hat{L}_i\hat{\rho}_i^{\bar{p}+1} +\bar{d}_i \right) o\hat{L}_i\hat{\rho}_i^{\bar{p}+1},
\end{aligned} $}
\end{equation}
where $l=j-\bar{p}$. \\
Thus, we can derive the following iterative expression to compute an over-estimate of the simulation error bound, where the considered horizon is denoted by $p=\ell\bar{p}+j$, with $\ell,j\in\mathbb{N}$ and $j\in[1,\bar{p})$:
\begin{equation}
\label{eq:tau_arx_pgrande_iter}
\begin{aligned}
\hat{\tau}_{i_{\ell\bar{p}+j}}&(\theta_{i_{\ell\bar{p}+j}}) \leq \hat{\tau}_{i_{\bar{p}}}(\theta_{i_{\bar{p}}}) \left( 1+\chi_{i,\bar{p}}+\chi_{i,\bar{p}}^2+\hdots+\chi_{i,\bar{p}}^{\ell-1} \right) +\\
&+ \bar{d}_i \left( \chi_{i,\bar{p}}+\chi_{i,\bar{p}}^2+\hdots+\chi_{i,\bar{p}}^{\ell} \right) + \tau_{i_{max_{\{j,\ell o\}}}}\chi_{i,\bar{p}}^\ell
\end{aligned}
\end{equation}
where
$\chi_{i,\bar{p}}=o\hat{L}_i\hat{\rho}_i^{\bar{p}+1},$
and
$\tau_{i_{max_{\{j,\ell o\}}}}=\max\{\hat{\tau}_{i_{j-\ell o}}(\theta_{i_{j-\ell o}}), \hdots, \hat{\tau}_{i_j}(\theta_{i_j}) \}.$

In general, the over-estimate   \eqref{eq:tau_arx_pgrande_iter} may diverge as $\ell$ increases. The next result provides a condition on $o,\,\hat{L}_i,\,\hat{\rho}_i,$ and $\bar{p}_i$ to guarantee convergence:
\begin{theorem}
\label{th:conv_tau_inf_ARX}
Consider any $\theta_{i_1}$ such that $h_{p,o}(\theta_{i_1}) \in \Gamma_{i_p}, \; \forall p \in [2,\bar{p}]$. Define  $\hat{\tau}_{i_{\infty}}$ as:
\begin{equation}
\label{eq:tau_inf_ARX_def}
\hat{\tau}_{i_{\infty}}(\theta_{i_{\bar{p}}}) \doteq \hat{\tau}_{i_{\bar{p}}}(\theta_{i_{\bar{p}}}) \left( \frac{1}{1-\chi_{i,\bar{p}}} \right) + \bar{d}_i \left( \frac{\chi_{i,\bar{p}}}{1-\chi_{i,\bar{p}}} \right).
\end{equation}
Then,
\begin{equation}
\label{eq:thm2_convergence}
\hat{\tau}_{i_p}\xrightarrow{p \to \infty}\hat{\tau}_{i_{\infty}}\iff o\hat{L}_i\hat{\rho}_i^{\bar{p}+1} <1
\end{equation}
\end{theorem}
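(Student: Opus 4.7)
My plan is to directly exploit the iterative bound \eqref{eq:tau_arx_pgrande_iter}: write $p=\ell\bar p + j$ with $j\in[1,\bar p)$ fixed and $\ell\to\infty$, and evaluate the limit of its right-hand side. That right-hand side decomposes into two truncated geometric series in $\chi_{i,\bar p}$ plus the residual term $\tau_{i_{max_{\{j,\ell o\}}}}\chi_{i,\bar p}^\ell$, and the whole question reduces to whether these geometric quantities converge, which is controlled exactly by the condition $\chi_{i,\bar p}<1$.

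For the sufficiency direction ($\chi_{i,\bar p}<1\Rightarrow\hat\tau_{i_p}\to\hat\tau_{i_\infty}$), I would apply the standard closed forms $\sum_{k=0}^{\ell-1}\chi_{i,\bar p}^k=(1-\chi_{i,\bar p}^\ell)/(1-\chi_{i,\bar p})$ and $\sum_{k=1}^{\ell}\chi_{i,\bar p}^k=\chi_{i,\bar p}(1-\chi_{i,\bar p}^\ell)/(1-\chi_{i,\bar p})$ and let $\ell\to\infty$. Since $\chi_{i,\bar p}^\ell\to 0$, the first two terms of \eqref{eq:tau_arx_pgrande_iter} converge to $\hat\tau_{i_{\bar p}}/(1-\chi_{i,\bar p})$ and $\bar d_i\chi_{i,\bar p}/(1-\chi_{i,\bar p})$ respectively, which already reproduces \eqref{eq:tau_inf_ARX_def}. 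The delicate point — and what I expect to be the main obstacle — is the residual $\tau_{i_{max_{\{j,\ell o\}}}}\chi_{i,\bar p}^\ell$, because $\tau_{i_{max_{\{j,\ell o\}}}}$ is itself built from recursively defined bounds at horizons greater than $\bar p$, so its uniform boundedness in $\ell$ is not immediate. I would handle this by induction on $\ell$: set $M_\ell:=\sup\{\hat\tau_{i_p}:p\leq\ell\bar p+j\}$, with the base value $M_0$ finite as it is a maximum of the finitely many bounds \eqref{eq:tau_hat_global_def} for $p\leq\bar p$. The single-step estimate \eqref{eq:tau_arx_pgrande_upbound} yields
$$M_{\ell+1}\leq\max\bigl(M_\ell,\;\hat\tau_{i_{\bar p}}+(M_\ell+\bar d_i)\chi_{i,\bar p}\bigr),$$
and when $\chi_{i,\bar p}<1$ the affine map $s\mapsto\hat\tau_{i_{\bar p}}+(s+\bar d_i)\chi_{i,\bar p}$ is a contraction with unique fixed point $M^\star=\hat\tau_{i_\infty}$, so $M_\ell\leq\max(M_0,\hat\tau_{i_\infty})$ uniformly. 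Therefore $\tau_{i_{max_{\{j,\ell o\}}}}\chi_{i,\bar p}^\ell\to 0$ and the limit of \eqref{eq:tau_arx_pgrande_iter} equals $\hat\tau_{i_\infty}$ exactly.

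For the necessity direction ($\chi_{i,\bar p}\geq 1\Rightarrow$ no convergence), I would note that $\hat\tau_{i_{\bar p}}>0$ (it is lower bounded by $\hat{\bar\varepsilon}_{i_{\bar p}}>0$) and that all terms on the right-hand side of \eqref{eq:tau_arx_pgrande_iter} are non-negative. When $\chi_{i,\bar p}\geq 1$ the leading partial sum satisfies $\sum_{k=0}^{\ell-1}\chi_{i,\bar p}^k\geq\ell$, so the first term alone grows without bound as $\ell\to\infty$, forcing the iterative over-estimate (by which $\hat\tau_{i_p}$ is defined for $p>\bar p$) to diverge to $+\infty$. In parallel, the closed-form \eqref{eq:tau_inf_ARX_def} is itself ill-posed in this regime, since $1/(1-\chi_{i,\bar p})$ is non-positive or undefined, so no finite limit matching $\hat\tau_{i_\infty}$ can exist. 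Combining the two directions gives the stated equivalence \eqref{eq:thm2_convergence}.
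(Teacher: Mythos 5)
Your proposal is correct and follows essentially the same route as the paper: both decompose the right-hand side of \eqref{eq:tau_arx_pgrande_iter} into two geometric sums in $\chi_{i,\bar{p}}$ plus the residual $\tau_{i_{max_{\{j,\ell o\}}}}\chi_{i,\bar{p}}^{\ell}$ and pass to the limit $\ell\to\infty$. The one place you go beyond the paper is worth noting: the paper simply asserts that $\tau_{i_{max_{\{j,\ell o\}}}}\chi_{i,\bar{p}}^{\ell}\to 0$, whereas this max ranges over recursively defined bounds at horizons exceeding $\bar{p}$, so its boundedness in $\ell$ is not automatic; your contraction/fixed-point argument $M_{\ell+1}\leq\max\bigl(M_\ell,\;\hat{\tau}_{i_{\bar{p}}}+(M_\ell+\bar{d}_i)\chi_{i,\bar{p}}\bigr)$ with fixed point $\hat{\tau}_{i_{\infty}}$ closes that gap cleanly, and your explicit treatment of the divergence when $\chi_{i,\bar{p}}\geq 1$ supplies the ``only if'' direction that the paper leaves implicit.
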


\begin{proof} See the Appendix. \end{proof}

\begin{remark}
The convergence condition of Theorem \ref{th:conv_tau_inf_ARX} depends on $o$, $\hat{L}_i$ and $\hat{\rho}_i$, obtained using Procedures \ref{p:d_bar_est_procedure}-\ref{p:decay_est_procedure}, which in turn depend on the system at hand, on the collected data, and on $\bar{p}$, which  is chosen by the user during the identification procedure. Therefore, for given values of $o$, $\hat{L}_i, \, \hat{\rho}_i$, the value of $\bar{p}$ should be chosen large enough to satisfy \eqref{eq:thm2_convergence}.
\end{remark}

Assuming that condition \eqref{eq:thm2_convergence} is met, then the quantity $\hat{\tau}_{i_{\infty}}$ \eqref{eq:tau_inf_ARX_def} is the wanted infinite-horizon simulation error bound. The next results provide further insight on the bound \eqref{eq:tau_inf_ARX_def} and, in particular, on whether convergence of $\hat{\tau}_{i_p}$ to $\hat{\tau}_{i_{\infty}}$ is from above or below.

If condition \eqref{eq:thm2_convergence} holds, we can compute the difference $\hat{\tau}_{i_{\infty}}(\theta_{i_{\bar{p}}})-\hat{\tau}_{i_{\ell\bar{p}+j}}(\theta_{i_{\ell\bar{p}+j}})$ by means of truncated geometric series, leading to:
\begin{equation}
\label{eq:dist_tau_inf_ARX}
\begin{aligned}
&\hat{\tau}_{i_{\infty}}(\theta_{i_{\bar{p}}})-\hat{\tau}_{i_{\ell\bar{p}+j}}(\theta_{i_{\ell\bar{p}+j}}) = \\
&\; \; =\hat{\tau}_{i_{\bar{p}}}(\theta_{i_{\bar{p}}}) \left( \frac{\chi_{i,\bar{p}}^\ell}{1-\chi_{i,\bar{p}}} \right) + \bar{d}_i \left( \frac{\chi_{i,\bar{p}}^{\ell+1}}{1-\chi_{i,\bar{p}}} \right)  -\tau_{i_{max_{\{ j, \ell o \} }}} \chi_{i,\bar{p}}^\ell.
\end{aligned}
\end{equation}
The terms multiplying $\hat{\tau}_{i_{\bar{p}}}$ and $\bar{d}_i$ converge to their limit (see \eqref{eq:tau_inf_ARX_def}) from below, while the term $\tau_{i_{max_{\{ j, \ell o \} }}} \chi_{i,\bar{p}}^\ell$ converges to zero from above as $\ell\to \infty$. Thus, in general it is possible that $\hat{\tau}_{i_{\ell\bar{p}+j}} > \hat{\tau}_{i_{\infty}}$ for some $\ell$ and $j$. The next Lemma is concerned with this aspect.
\begin{lemma}
\label{l:overshoot_cond}
Let $\tau_{i_{max}}$ be defined as:
\[
\tau_{i_{max}}=\max \{\hat{\tau}_{i_1}(\theta_{i_1}),\hdots,\hat{\tau}_{i_{\bar{p}}}(\theta_{i_{\bar{p}}})\}.
\]
If $h_{p,o}(\theta_{i_1}) \in \Gamma_{i_p}, \; \forall p \in [2,\bar{p}]$, and if
\begin{equation}
\label{eq:lemma_1_condition}
\tau_{i_{max}} < \frac{\hat{\tau}_{i_{\bar{p}}}+\bar{d}_i \chi_{i,\bar{p}}}{1-\chi_{i,\bar{p}}},
\end{equation}
then $\hat{\tau}_{i_{\ell\bar{p}+j}}(\theta_{i_{\ell\bar{p}+j}}) \leq \hat{\tau}_{i_{\infty}}(\theta_{i_{\bar{p}}}), \; \forall \ell,j$. Otherwise, there could exist at least a pair $(\ell,j)$ such that $\hat{\tau}_{i_{\ell\bar{p}+j}}(\theta_{i_{\ell\bar{p}+j}})>\hat{\tau}_{i_{\infty}}(\theta_{i_{\bar{p}}})$.
\end{lemma}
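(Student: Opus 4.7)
The plan is to exploit the exact difference formula \eqref{eq:dist_tau_inf_ARX} and reduce the inequality to a single sign check. Factoring $\chi_{i,\bar{p}}^\ell$ out and using the definition \eqref{eq:tau_inf_ARX_def} of $\hat{\tau}_{i_\infty}$, I would rewrite
\begin{equation*}
\hat{\tau}_{i_{\infty}}(\theta_{i_{\bar{p}}})-\hat{\tau}_{i_{\ell\bar{p}+j}}(\theta_{i_{\ell\bar{p}+j}})
= \chi_{i,\bar{p}}^{\ell}\left[\,\frac{\hat{\tau}_{i_{\bar{p}}}+\bar{d}_i\,\chi_{i,\bar{p}}}{1-\chi_{i,\bar{p}}} \;-\; \tau_{i_{max_{\{j,\ell o\}}}}\right].
\end{equation*}
Since $\chi_{i,\bar{p}}^\ell>0$, the lemma reduces to showing that the bracket is non-negative for every admissible $(\ell,j)$, and this bracket is strictly positive exactly when $\tau_{i_{max_{\{j,\ell o\}}}}$ stays below the fraction displayed in \eqref{eq:lemma_1_condition}.

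Next, I would argue that the sliding-window maximum $\tau_{i_{max_{\{j,\ell o\}}}}=\max\{\hat{\tau}_{i_{j-\ell o}},\ldots,\hat{\tau}_{i_j}\}$ is always bounded above by $\tau_{i_{max}}$. The reason is that the iterative over-estimate \eqref{eq:tau_arx_pgrande_iter} is obtained by unrolling the recursion \eqref{eq:tau_arx_pgrande_upbound} exactly $\ell$ times, so the only $\hat{\tau}_{i_q}$ values that survive in the final expression are those with $q\leq j\leq \bar{p}$, i.e.\ base indices computed from \eqref{eq:tau_hat_global_def}. Consequently, under hypothesis \eqref{eq:lemma_1_condition} we have $\tau_{i_{max_{\{j,\ell o\}}}}\leq \tau_{i_{max}}<(\hat{\tau}_{i_{\bar{p}}}+\bar{d}_i\chi_{i,\bar{p}})/(1-\chi_{i,\bar{p}})$, so the bracketed term is strictly positive and $\hat{\tau}_{i_{\ell\bar{p}+j}}\leq\hat{\tau}_{i_{\infty}}$ for all $\ell,j$, which proves the first part.

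For the second part (the ``otherwise''), I would invoke the contrapositive of the same identity. If \eqref{eq:lemma_1_condition} is violated, i.e.\ $\tau_{i_{max}}\geq\hat{\tau}_{i_{\infty}}(\theta_{i_{\bar{p}}})$, there exists at least one base index $p^\star\in[1,\bar{p}]$ with $\hat{\tau}_{i_{p^\star}}\geq\hat{\tau}_{i_\infty}$. One may then exhibit a concrete pair $(\ell,j)$ producing overshoot by choosing $\ell=0$, $j=p^\star$ (trivially giving $\hat{\tau}_{i_{p^\star}}>\hat{\tau}_{i_\infty}$ in the strict case) or, for $\ell\geq 1$, by choosing $j$ so that $p^\star\in[j-\ell o,j]$, which makes $\tau_{i_{max_{\{j,\ell o\}}}}\geq\hat{\tau}_{i_{p^\star}}\geq\hat{\tau}_{i_\infty}$ and flips the sign of the bracket above.

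The hard part is not algebraic but notational: one must verify that the index range inside $\tau_{i_{max_{\{j,\ell o\}}}}$ truly lies in $[1,\bar{p}]$ after unrolling the recursion, so that the sliding-window max is dominated by the globally defined $\tau_{i_{max}}$; once this is pinned down, the rest is a one-line factorization using the closed form \eqref{eq:tau_inf_ARX_def}.
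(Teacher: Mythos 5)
Your proposal is correct and follows essentially the same route as the paper's proof: it starts from the difference formula \eqref{eq:dist_tau_inf_ARX}, uses $\tau_{i_{max_{\{j,\ell o\}}}}\leq\tau_{i_{max}}$ (valid since $j<\bar{p}$ keeps the window indices within $[1,\bar{p}]$), and imposes positivity of the difference to recover exactly condition \eqref{eq:lemma_1_condition}. Your explicit factorization of $\chi_{i,\bar{p}}^{\ell}$, which exposes the right-hand side of \eqref{eq:lemma_1_condition} as precisely $\hat{\tau}_{i_{\infty}}(\theta_{i_{\bar{p}}})$, is a slightly cleaner presentation of the same sign argument, and the extra work on the ``otherwise'' branch is more than the lemma's non-committal ``could exist'' clause requires.
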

\begin{proof} See the Appendix. \end{proof}
Note that the condition \eqref{eq:lemma_1_condition} of Lemma \ref{l:overshoot_cond} is in a sense adverse to the convergence condition \eqref{eq:thm2_convergence} of Theorem \ref{th:conv_tau_inf_ARX}. Since $\hat{\rho}_i<1$ by definition, there exists always a value of $\bar{p}$ large enough to satisfy \eqref{eq:thm2_convergence}. On the other hand, the right-hand side of \eqref{eq:lemma_1_condition} decreases as $\bar{p}$ increases, while $\tau_{i_{max}}$ is only weakly dependent on $\bar{p}$. Thus, Lemma \ref{l:overshoot_cond} suggests to pick a ``small'' value of $\bar{p}$, while Theorem \ref{th:conv_tau_inf_ARX} is generally satisfied with ``large'' $\bar{p}$. If one is interested in finding a finite simulation time such that, for any larger horizon, the simulation error bound converges from below to the infinite-horizon value, then the following result can be exploited.

\begin{remark}
\label{rm:end_of_overshoot}
Assume condition \eqref{eq:lemma_1_condition} is not satisfied, and take a small increase on the value of the asymptotic error bound, e.g. given by $\delta_i=10^{-2}\cdot \hat{\tau}_{i_{\infty}}(\theta_{i_{\bar{p}}})$. Define $\bar{\ell}$ as:
\[ \begin{array}{c}
\bar{\ell}=\min\limits_{\ell}\ell\\
\text{subject to}\\
\left\vert \frac{\hat{\tau}_{i_{\bar{p}}}+\bar{d}_i \chi_{i,\bar{p}}}{1-\chi_{i,\bar{p}}} - \tau_{i_{max}} \right\vert \chi_{i,\bar{p}}^{\bar{\ell}} < \delta_i
\end{array} \]
Then, as a straightforward consequence of Lemma \ref{l:overshoot_cond}, the following result holds:
\begin{equation}
\label{eq:l2_cond}\nonumber
\hat{\tau}_{i_{\ell\bar{p}+j}}(\theta_{i_{\ell\bar{p}+j}})\leq \hat{\tau}_{i_{\infty}}(\theta_{i_{\bar{p}}})+\delta_i, \; \forall \ell \geq \bar{\ell}, \; \forall j
\end{equation}
\end{remark}
%\begin{lemma}
%\label{l:end_of_overshoot}
%Assume condition \eqref{eq:lemma_1_condition} is not satisfied, and take a small increase on the value of the asymptotic error bound, e.g. given by $\delta_i=10^{-2}\cdot \hat{\tau}_{i_{\infty}}(\theta_{i_{\bar{p}}})$. Define $\bar{\ell}$ as:
%\[ \begin{array}{c}
%\bar{\ell}=\min\limits_{\ell}\ell\\
%\text{subject to}\\
%\left\vert \frac{\hat{\tau}_{i_{\bar{p}}}+\bar{d}_i \chi_{i,\bar{p}}}{1-\chi_{i,\bar{p}}} - \tau_{i_{max}} \right\vert \chi_{i,\bar{p}}^{\bar{\ell}} < \delta_i
%\end{array} \]
%Then, the following result holds:
%\begin{equation}
%\label{eq:l2_cond}
%\hat{\tau}_{i_{\ell\bar{p}+j}}(\theta_{i_{\ell\bar{p}+j}})\leq \hat{\tau}_{i_{\infty}}(\theta_{i_{\bar{p}}})+\delta_i, \; \forall \ell \geq \bar{\ell}, \; \forall j
%\end{equation}
%\end{lemma}
%\begin{proof} See the Appendix. \end{proof}

Finally, we show that Theorem \ref{th:conv_tau_inf_ARX} is also instrumental to derive a sufficient condition for the parameter vector $\theta_1$ to yield an asymptotically stable model.
\begin{theorem}
	\label{th:asymptotic_stability}
	Let Assumptions \ref{as:bounded_dist}-\ref{as:info_data} hold, and further assume that the chosen value of $\bar{p}$ satisfies \eqref{eq:thm2_convergence}. Consider a generic parameter vector $\theta_{i_1}\in\mathbb{R}^{o(m+1)}$.  If  
	\[
	h_{p,o}(\theta_{i_1}) \in \Gamma_{i_p}, \; \forall p \in [2,\bar{p}],
	\]
then the corresponding ARX model \eqref{eq:1s_pred_classic_form} is asymptotically stable.
\end{theorem}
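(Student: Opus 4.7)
The plan is to reduce asymptotic stability of \eqref{eq:1s_pred_classic_form} to the condition $\rho(A)<1$ for the $o\times o$ companion matrix $A$ built from the autoregressive coefficients $\theta_{i_{1,z}}$, and then exploit the $\Gamma_{i_p}$ bounds together with condition \eqref{eq:thm2_convergence} to rule out any eigenvalue with $|\lambda|\ge 1$.

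First, I would set $u\equiv 0$ in \eqref{eq:1s_pred_classic_form} and rewrite the free response as $\xi(k+1)=A\,\xi(k)$, with state $\xi(k)=[\hat z_i(k),\dots,\hat z_i(k-o+1)]^T$ and with the first row of $A$ equal to $\theta_{i_{1,z}}^T$ and the subdiagonal filled with ones. The key observation is that the polynomial map $h_{p,o}$ in \eqref{eq:polynomial_model} is obtained by syntactic iteration of \eqref{eq:1s_pred_classic_form}; a short unrolling of that recursion, identifying the coefficient of $y_i(k-j+1)$ in $\hat z_i(k+p)$ with the first-row entry $(A^p)_{1j}$, shows that $\theta_{i_{p,z}}^{(j)}=(A^p)_{1j}$ for all $p\ge 1$ and $j\in[1,o]$. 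The hypothesis $h_{p,o}(\theta_{i_1})\in\Gamma_{i_p}$ therefore translates into the entry-wise bound $|(A^p)_{1j}|\le \hat L_i\hat\rho_i^{p+j}$ for every $p\in[2,\bar p]$ and every $j\in[1,o]$.

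Next I would argue by contradiction. Pick any eigenvalue $\lambda\in\mathbb C$ of $A$ and assume $|\lambda|\ge 1$ (the case $\lambda=0$ is already in the open unit disk). A standard property of companion matrices is that $v=[\lambda^{o-1},\lambda^{o-2},\dots,\lambda,1]^T$ is an eigenvector, so $A^p v=\lambda^p v$; reading the first component at $p=\bar p$ yields $\sum_{j=1}^{o}(A^{\bar p})_{1j}\lambda^{o-j}=\lambda^{\bar p+o-1}$. Taking absolute values, inserting the entry-wise bound derived above, and using the monotonicities $|\lambda|^{o-j}\le|\lambda|^{o-1}$ and $\hat\rho_i^{j}\le\hat\rho_i$ valid for $j\ge 1$ under $|\lambda|\ge 1$ and $\hat\rho_i<1$, one obtains $|\lambda|^{\bar p+o-1}\le o\hat L_i\hat\rho_i^{\bar p+1}|\lambda|^{o-1}$. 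Dividing by $|\lambda|^{o-1}$ and invoking \eqref{eq:thm2_convergence} gives $|\lambda|^{\bar p}<1$, which contradicts $|\lambda|\ge 1$; hence $\rho(A)<1$ and \eqref{eq:1s_pred_classic_form} is asymptotically stable.

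The step I expect to require the most care is the identification $\theta_{i_{p,z}}^{(j)}=(A^p)_{1j}$: although conceptually immediate from the iterative definition of $h_{p,o}$, it requires checking that the coefficients produced by replacing predictions for measurements in the iteration match exactly the entries of successive powers of the companion form. Once that bridge is in place, the rest is a one-line eigenvalue estimate; in fact only the bound at the single horizon $p=\bar p$ is needed, and the remaining constraints for $p\in[2,\bar p-1]$, although central to the infinite-horizon results such as Theorem~\ref{th:conv_tau_inf_ARX}, are not used in the stability argument.
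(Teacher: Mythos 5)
Your proof is correct, but it takes a genuinely different route from the paper's. The paper argues via BIBO stability: under Assumption~\ref{as:asympt_stable} the true output $z_i$ is bounded, the hypotheses guarantee a finite infinite-horizon simulation error bound $\hat{\tau}_{i_\infty}$ via Theorem~\ref{th:conv_tau_inf_ARX}, hence the predictor's free-running output is bounded; since an ARX model is completely observable and reachable, BIBO stability then yields asymptotic stability. Your argument is instead purely spectral: you identify $\theta_{i_{p,z}}^{(j)}=(A^p)_{1j}$ for the companion matrix $A$ of the autoregressive part (a correct identification — one checks, e.g., $(A^2)_{1l}=a_1a_l+a_{l+1}$ against the unrolled recursion, and the input terms enter only additively so they do not perturb the output-coefficient recursion), and then the eigenvector relation $\sum_{j}(A^{\bar p})_{1j}\lambda^{o-j}=\lambda^{\bar p+o-1}$ combined with the $\Gamma_{i_{\bar p}}$ bounds and $o\hat L_i\hat\rho_i^{\bar p+1}<1$ forces $\rho(A)<1$. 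What each approach buys: the paper's proof is short given the machinery of Theorem~\ref{th:conv_tau_inf_ARX}, but it invokes Assumption~\ref{as:asympt_stable} (not listed among the theorem's hypotheses) and implicitly relies on the validity of the bounds \eqref{eq:z_under_bound_tau}, which hold only under additional conditions on $\alpha$, $\gamma$, $\hat L_i$, $\hat\rho_i$ relative to their true counterparts. Your proof is entirely self-contained and deterministic — it uses no data, no noise model, and no property of the true system, only the constraint at the single horizon $p=\bar p$ together with \eqref{eq:thm2_convergence} — and it delivers the stronger, explicit conclusion $\rho(A)^{\bar p}\le o\hat L_i\hat\rho_i^{\bar p+1}$, which quantifies the stability margin. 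The one step deserving care, as you note, is the coefficient identification, and your verification of it is sound.
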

\begin{proof}
	See the Appendix.
\end{proof}

Summing up, the findings and procedures  described so far address points 1)-3) and 5) of \textbf{Problem 1}. In the next section, we present two approaches to identify the one-step-ahead model \eqref{eq:1s_pred_classic_form} exploiting these results, thus dealing also with point 4) of \textbf{Problem 1}. 
\section{Predictor identification}
\label{s:pred_ident}
\subsection{Method I}
\label{ss:pred_id_method_1}
In this first approach, the parameters are estimated as:
\begin{subequations}\label{eq:opt_prob_method_1}
\begin{eqnarray}
&\hat{\theta}_{i_1}= \text{arg} \min\limits_{\theta_{i_1}}{\left\Vert \boldsymbol{\tau}_i(\theta) \right\Vert_{\infty}} \label{eq:opt_prob_method_1_a}\\
&\text{subject to} \nonumber\\
&h_{p,o}(\theta_{i_1}) \in \Theta_{i_p}^{L \rho}, \; \forall p \in [1,\bar{p}]
\label{eq:opt_prob_method_1_b}
\end{eqnarray}
\end{subequations}
where $\boldsymbol{\tau}_i=\left[ \hat{\tau}_{i_1}(\theta) \; \hat{\tau}_{i_2}(\theta) \; \hdots \; \hat{\tau}_{i_{\bar{p}}}(\theta) \right]^T$, $\hat{\tau}_{i_p}(\theta)$ is defined as in \eqref{eq:tau_hat_global_def}. %In \eqref{eq:opt_prob_method_1_b}, recall that $h_{p,o}(\theta_{i_1})=\theta_{i_p}$, i.e. these constraints pertain to the parameters of each  multi-step predictor, for $p=1,\ldots,\bar{p}$, deriving from the iteration of a one-step-ahead model. 
Namely, we thus aim to minimize the worst global error bound among all the simulation steps of interest, while ensuring that the resulting multi-step predictors comply with the derived FPSs. Problem \eqref{eq:opt_prob_method_1} is equivalent to: 
\begin{equation}
\label{eq:opt_prob_method_1_large}
\resizebox{1\columnwidth}{!}{ $
\begin{aligned}
\hat{\theta}_{i_1}=&\text{arg} \min\limits_{\theta_{i_1}}{\left( \max_{p \in [1,\bar{p}]^{\phantom{0}}}{\max_{k=1,\hdots,N^{\phantom{0}}}{\max_{\theta \in \Theta_{i_p}^{L\rho}}{ \left\vert \tilde{\varphi}_{i_p}(k)^T(\theta-\theta_{i_p}) \right\vert + \hat{\bar{\varepsilon}}_{i_p} }}} \right)} \\
&\text{subject to} \\
&h_{p,o}(\theta_{i_1}) \in \Theta_{i_p}^{L\rho}, \; \forall p \in [1,\bar{p}]
\end{aligned} $}
\end{equation}
This can be reformulated into a simpler optimization problem. The first step is to split the absolute value in the cost function of \eqref{eq:opt_prob_method_1_large} into two terms, by introducing the following quantities:
\begin{equation*}
\check{\varphi}_{i_p}(k)= \begin{cases} \tilde{\varphi}_{i_p}(k) \qquad \qquad \text{if} \quad k\leq N \\ -\tilde{\varphi}_{i_p}(k-N) \quad \text{if} \quad k>N  \end{cases} \, \text{for} \; \, k=1,\hdots,2N.
\end{equation*}
Then, let us define:
\begin{equation}\label{eq:preliminary_LP}
c_{i_{k_p}} \doteq \max_{\theta \in \Theta_{i_p}^{L\rho}}{\check{\varphi}_{i_p}(k)^T \theta, \quad k=1,\hdots,2N, \quad p=1,\hdots,\bar{p}}.
\end{equation}
The values of $c_{i_{k_p}}$ are computed by solving $2N\bar{p}$ linear programs (LPs). Then, \eqref{eq:opt_prob_method_1_large} can be reformulated as:\small
%\begin{equation}
%\label{eq:opt_prob_method_1_medium}
%\begin{aligned}
%\hat{\theta}_{i_1}= &\text{arg} \min_{\theta}{ \max_{p \in [1,\bar{p}]^{\phantom{0}}}{ \max_{k=1,\hdots,2N^{\phantom{0}}}{ \left( c_{i_{k_p}} - \check{\varphi}_{i_p}(k)^T \theta_{i_p} \right) }}} \\
%&\text{subject to} \\
%&h_{p,o}(\theta_{i_1})  \in \Theta_{i_p}^{L\rho}, \; \forall p \in [1,\bar{p}]
%\end{aligned}
%\end{equation}
%Finally, \eqref{eq:opt_prob_method_1_medium} corresponds to:
\begin{subequations}
\label{eq:opt_prob_method_1_small}
\begin{eqnarray}
&\hat{\theta}_{i_1} = \text{arg} \min\limits_{\theta_{i_1},\zeta}{\zeta} \label{eq:opt_prob_method_1_small_a}&\\
&\text{subject to} \nonumber&\\
&c_{i_{k_p}}- \check{\varphi}_{i_p}(k)^T h_{p,o}(\theta_{i_1}) \leq \zeta, \;\forall k \in [1,2N],\; \forall p \in [1,\bar{p}] \label{eq:opt_prob_method_1_small_b}&\\
&\theta_{i_1} \in \Theta_{i_1}^{L\rho} \label{eq:opt_prob_method_1_small_c}&\\
&h_{p,o}(\theta_{i_1}) \in \Theta_{i_p}^{L\rho}, \; \forall p \in [2,\bar{p}]& \label{eq:opt_prob_method_1_small_d}
\end{eqnarray}
\end{subequations}\normalsize
\eqref{eq:opt_prob_method_1_small} is a nonlinear optimization program (NLP) with linear cost \eqref{eq:opt_prob_method_1_small_a}, $2N\bar{p}$ nonlinear constraints \eqref{eq:opt_prob_method_1_small_b} (that require the preliminary solution of the $2N\bar{p}$ LPs \eqref{eq:preliminary_LP}), $2N$ linear constraints \eqref{eq:opt_prob_method_1_small_c}, finally $2N(\bar{p}-1)$ nonlinear constraints \eqref{eq:opt_prob_method_1_small_d}. All nonlinear constraints are polynomial, thus Jacobian and Hessians can be efficiently computed analytically and exploited in the numerical solver.

A possible alternative is to use a quadratic cost in \eqref{eq:opt_prob_method_1}, e.g. $\boldsymbol{\tau}_i(\theta)^TQ\boldsymbol{\tau}_i(\theta)$ where $Q$ is a symmetric positive definite weighting matrix. This would penalize a weighted average of the simulation error bounds over the considered horizon $\bar{p}$, instead of its worst-case as done in \eqref{eq:opt_prob_method_1}. In this case, a similar reformulation can be carried out, resulting in an NLP with quadratic cost and linear and polynomial constraints.

\subsection{Method II}
\label{ss:pred_id_method_2}
In the second approach, we search the one-step-ahead model that minimizes a standard simulation error criterion,
 while enforcing membership to the FPS $\Theta_{i_1}^{L\rho}$ and  the exponentially decaying behavior of the iterated predictors parameters for $p>1$ up to $\bar{p}$.  The corresponding  NLP is:
\begin{subequations}
\label{eq:opt_prob_method_2}
\begin{eqnarray}
\hat{\theta}_{i_1}=&\text{arg} \min\limits_{\theta_{i_1} \in \Theta_{i_1}^{L\rho}}{\left\Vert \tilde{\boldsymbol{Y}}_i-\hat{\boldsymbol{Z}}_i(\theta_{i_1}) \right\Vert_2^2} \\
&\text{subject to} \nonumber\\
&h_{p,o}(\theta_{i_1}) \in \Gamma_{i_p}, \; \forall p \in [2,\bar{p}]\label{eq:opt_prob_method_2b}
\end{eqnarray}
\end{subequations}
where
\[ \resizebox{1\columnwidth}{!}{ $\begin{aligned}
&\tilde{\boldsymbol{Y}}_i=\left[ \tilde{y}_i(1) \; \tilde{y}_i(2) \; \hdots \; \tilde{y}_i(N) \right]^T\\
&\hat{\boldsymbol{Z}}_i(\theta_{i_1})=\left[\tilde{\varphi}_{i_1}(0)^T \theta_{i_1} \; \tilde{\varphi}_{i_2}(0)^T h_{2,o}(\theta_{i_1}) \; \hdots \; \tilde{\varphi}_{i_N}(0)^T h_{N,o}(\theta_{i_1}) \right]^T. 
\end{aligned}$}
\]
Problem \eqref{eq:opt_prob_method_2} is a NLP with polynomial cost function, $2N$ linear constraints and $2(\bar{p}-1)$ polynomial constraints. Also in this method, Jacobian and Hessians can be efficiently computed analytically.

\begin{remark}
\label{rm:id_algo}
The optimization problems of Methods I and II are always feasible by construction. The inclusion of constraints \eqref{eq:opt_prob_method_1_b} and \eqref{eq:opt_prob_method_2b} guarantees consistency with the results of Section \ref{s:ms_sm_appr}, including asymptotic stability of the derived models, as shown by Theorem \ref{th:asymptotic_stability}. %Methods I and II are two viable identification approaches exploiting the results presented in this work, pertaining to the Set Membership framework. 
It is also possible to adopt variations, e.g. by adding more constraints to Method II, like $\theta_{i_p}\in \Theta_{i_p}^{L\rho}$ for some selected $p\in[1,p_{max}]$. %Another possibility is a variation of the prediction error method, where the one-step-ahead prediction error minimization is combined with constraints \eqref{eq:opt_prob_method_1_b} or \eqref{eq:opt_prob_method_2b}.
\end{remark}

\subsection{Computational aspects}\label{ss:comput_aspects}
Computational effort is often the main drawback in Set Membership identification. The optimization problems \eqref{eq:opt_prob_method_1_small} and \eqref{eq:opt_prob_method_2} are constrained Nonlinear Programs (NLP), thus they are not convex in general. Finding a feasible point for this class of problems can be computationally hard, even when this point is guaranteed to exist like in our case. In our tests in Section \ref{s:results}, the NLPs are solved resorting to  Sequential Quadratic Programming (SQP) algorithms (MatLab's  \verb|fmincon|). In the literature (e.g., \cite{nocedal2006numerical}) the guaranteed global convergence of SQP to a local minimizer has been proven, under rather mild assumptions. Yet, in applications these assumptions are not easy to verify. What we can however observe are the practical performance obtained with such a well-established numerical approach. Given the non-convex nature of the NLPs, for each problem instance we ran the solver several times, each one with a different initialization value, to evaluate whether it gave consistent results and to choose the best local optimum among the resulting ones. In particular, in all the runs for either NLPs \eqref{eq:opt_prob_method_1_small} or \eqref{eq:opt_prob_method_2} (around 200 for each problem and for both the numerical example and the real-world application in Section \ref{s:results}), the SQP algorithm was always able to converge to a feasible local minimizer.\\
%
%
% Due to the non-convex nature of the problem, the numerical solver generally converges to a local optimum. The sub-optimality of the solution clearly depends on the initialization of the parameters in the SQP solver. One possible approach to attain the global optimum is to run several times the algorithm with different initialization values, e.g. generating them randomly within a certain set $\mathcal{P}$, and then choose among the obtained solutions the model giving the lowest cost function value.
The complexity of the NLP mainly depends on the FPSs, which are used to define the constraints and to compute the simulation error bounds.
The FPSs are polytopes whose number of facets generally grows linearly with the number of data points, and whose dimensionality grows linearly with the horizon $p$ in the multi-step approach adopted here. To reduce complexity, in the literature there are several contributions proposing to outer-approximate the FPSs, see references provided in Section \ref{ss:FPS_and_tau_def}. These approaches present different trade-offs between complexity reduction and conservativeness.\\
An alternative we prefer in our context, where computational time is not critical, since the identification is carried out off-line, is to resort to a redundant constraint removal procedure. In Method I and II, the set membership constraints are nonlinear in the optimization variable $\theta_{i_1}$. However, for each $p$ the corresponding FPS features $2N$ inequalities that are linear in the entries of $\theta_{i_p}$. Therefore, we can split \[
h_{p,o}(\theta_{i_1}) \in \Theta_{i_p}, \; \forall p,
\]
into 
\[\theta_{i_p} \in \Theta_{i_p} \; \wedge \theta_{i_p}=h_{p,o}(\theta_{i_1}), \; \forall p,
\]
and then carry out a redundant constraint removal routine on each set of linear constraints  $\theta_{i_p} \in \Theta_{i_p}$ for $p\in[1,\bar{p}]$. 

In \cite{paulraj_comparative}, a comparative analysis of different redundant constraints identification approaches is presented. We tried in our tests the one described in \cite{caron_mcdonald_ponic}, which is based on the minimization of each linear constraint function, subject to the remaining constraints. Then, if the obtained optimal value is positive, the constraint is marked as redundant and can be removed from the set. %of the cost function is compared with the right-hand side of the corresponding constraint: if the  to decide whether the latter is redundant or not (\textit{i.e.} if it is lower of the right-hand side value or not, respectively). Notice that this algorithm never consider the objective function of the original optimization problem. 
This method requires the solution of as many LPs  as the number of original constraints, for each FPS. %Moreover, we can remove only the constraints that are redundant for each individual $p$, as we are not able to identify whether a constraint is redundant for the all stack of $\theta_{i_p}(\theta_{i_1})$, with $p \in [1,\bar{p}]$. 
However, in our tests it consistently reduces the total number of constraints we are dealing with. %, which appear as nonlinear ones in the optimization problems presented in Section \ref{s:pred_ident}. The same procedure can be also applied to reduce the number of constraints \eqref{eq:opt_prob_method_1_small_b} in Method I. %This solution could help to reduce the overall computational effort, with respect to the usage of the full number of original nonlinear constraints, especially if $N$ and $\bar{p}$ are sufficiently large.
\section{State-space predictor form}
\label{s:ss_form}
When the state is measurable, we can identify a predictor model of the form \eqref{eq:sist_desc}, where $C$ is replaced by the identity matrix. Therefore, the $p$-steps-ahead $i$-th output is:
\begin{equation}
\label{eq:output_ss_form}
z_i(k+p)=x_i(k+p)=C_i A^px(k)+C_i \sum_{j=1}^p A^{j-1}Bu(k+p-j),
\end{equation}
where $C_i$ is the $i$-th row of the identity matrix. We form the regressor $\psi_p \in \mathbb{R}^{n+mp}$ as:
\begin{equation}
\label{eq:regr_ss_def}\nonumber
\psi_p(k)= \left[ x(k)^T \; u(k)^T \; u(k+1)^T \; \cdots \; u(k+p-1)^T \right]^T,
\end{equation}
and the parameter vector $\theta^0_{i_p} \in \mathbb{R}^{n+mp}$ is:
\begin{equation}
\label{eq:param_vect_ss_form}\nonumber
\theta^0_{i_p}= \begin{bmatrix} C_iA^p & C_iA^{p-1}B & C_iA^{p-2}B & \hdots & C_iAB & C_iB \end{bmatrix}^T.
\end{equation}
Then, \eqref{eq:output_ss_form} can be written as $z_i(k+p)=\psi_p(k)^T \theta^0_{i_1}$. \\
Note that, differently from the ARX form considered in the previous sections, the regressor is now the same for all the $n$ output equations. The noise-corrupted measure of the system state is $y(k)=z(k)+d(k)$. We define the one-step-ahead model as:
\begin{equation}
\label{eq:pred_1s_ss_form}
\hat{z}_i(k+1)=\varphi_1(k)^T \theta_{i_1},
\end{equation}
where $\varphi_1(k)=\left[y(k)^T \; u(k)^T \right]^T \in \mathbb{R}^{n+m}$, and $\theta_{i_1}=\left[ C_i A \; C_i B \right]^T \in \mathbb{R}^{n+m}$. Then, the multi-step predictors are obtained by iteration of \eqref{eq:pred_1s_ss_form}, and their parameters are polynomial functions of the parameters of the one-step-ahead predictor, denoted as $\theta_{i_p}=h_{p,n}(\theta_{i_1})\in \mathbb{R}^{n+mp}$. \\
Under Assumptions \ref{as:asympt_stable}-\ref{as:bounded_dist}, the regressor $\psi_p$ belongs to a compact set $\Psi_p$:
\begin{equation*}
\psi_p(k)\in \Psi_p \subset \mathbb{R}^{n+mp}, \; \Psi_p \text{ compact, } \forall p \in \mathbb{N}, \; \forall k \in \mathbb{Z},
\end{equation*}
and $\varphi_p$ belongs to a compact set $\Phi_p$:
\begin{equation*}
\varphi_p(k) \in \Phi_p = \Psi_p \oplus \mathbb{D}_p, \; \forall p \in \mathbb{N}, \; \forall k \in \mathbb{Z},
\end{equation*}
where $\mathbb{D}_p \doteq \left\{ \left[ d^T,0,\hdots,0 \right]^T : |d| \leq \bar{d}_0 \right\}$. \\
The sampled data set is defined as:
\begin{equation*}
\tilde{\mathscr{V}}_{i_p}^N \doteq \left\{ \tilde{v}_{i_p}(k)= \begin{bmatrix} \tilde{\varphi}_p(k) \\ \tilde{y}_{i_p}(k) \end{bmatrix}, \; k=1,\hdots,N \right\} \subset \mathbb{R}^{1+n+mp},
\end{equation*}
with $\tilde{y}_{i_p}(k) \doteq \tilde{y}_i(k+p)$, and its continuous counterpart is:
\begin{equation*} \resizebox{1\columnwidth}{!}{ $
\mathscr{V}_{i_p} \doteq \left\{ v_{i_p}= \begin{bmatrix} \varphi_p \\ y_{i_p} \end{bmatrix}: \, y_{i_p} \in Y_{i_p}(\varphi_p), \, \forall \varphi_p \in \Phi_p \right\} \subset \mathbb{R}^{1+n+mp}, $}
\end{equation*}
where $Y_{i_p}(\varphi_p) \subset \mathbb{R}$ is the compact set of all the possible $i$-th output values corresponding to each regressor $\varphi_p \in \Phi_p$, and to every possible noise realization $d_i: |d_i|\leq \bar{d}_{0_i}$. Assumption \ref{as:info_data} and its consequences apply also here, as in Section \ref{s:multitep_pred}. Moreover, all the results presented in Section \ref{s:ms_sm_appr} can be straightforwardly extended to the case of the predictor defined in \eqref{eq:pred_1s_ss_form}. The main difference is that here the statement of Corollary \ref{c:lambda_rate_conv} becomes $\lambda_{i_p} = \bar{d}_0^T \left\vert (C_iA^p)^T \right\vert \leq \left\Vert \bar{d}_0 \right\Vert_1 L_i \rho_i^{p+1}$, and thus \eqref{eq:L_eff_arx} becomes $\hat{L}_i=\nicefrac{L_i'}{\left\Vert \bar{d} \right\Vert_1}$.

Furthermore, also the results presented in Section \ref{s:tau_inf_intro} can be extended to the predictor model defined by \eqref{eq:pred_1s_ss_form}. Here, going through the same reasoning of \eqref{eq:err_bound_arx_pgrande}-\eqref{eq:tau_arx_pgrande_iter1} leads to:
\begin{equation}
\label{eq:err_bound_ss_pgrande}\nonumber
\begin{aligned}
&\left\vert z_i(k+\ell\bar{p}+j) - \hat{z}_i(k+\ell\bar{p}+j) \right\vert \leq \hat{\tau}_{i_{\ell\bar{p}+j}}(\theta_{i_{\ell\bar{p}+j}}) \leq \\
&\; \; \leq \hat{\tau}_{i_{\bar{p}}}(\theta_{i_{\bar{p}}}) \sum_{m=0}^{\ell-1} \chi_{i,\bar{p}}^m + \left\Vert \bar{d} \right\Vert_1 \sum_{m=1}^\ell \chi_{i,\bar{p}}^m + \left\Vert \hat{\tau}_j \right\Vert_1 \chi_{i,\bar{p}}^\ell,
\end{aligned}
\end{equation} 
where $\hat{\tau}_j=\left[\hat{\tau}_{1_j}(\theta_{1_j}), \, \hdots, \, \hat{\tau}_{n_j}(\theta_{n_j}) \right]^T$ and $\chi_{i,\bar{p}}=\hat{L}_i \hat{\rho}_i^{\bar{p}+1}$. \\
Theorem \ref{th:conv_tau_inf_ARX} and the related Remarks and Lemmas apply straightforwardly to \eqref{eq:pred_1s_ss_form}, with minor modifications: the convergence condition of Theorem \ref{th:conv_tau_inf_ARX} is here given by $\left\vert \chi_{i,\bar{p}} \right\vert = \left\vert \hat{L}_i \hat{\rho}_i^{\bar{p}+1} \right\vert < 1$, and \eqref{eq:tau_inf_ARX_def} becomes
\[
\hat{\tau}_{i_{\infty}}(\theta_{i_{\bar{p}}})=\hat{\tau}_{i_{\bar{p}}}(\theta_{i_{\bar{p}}}) \left( \frac{1}{1-\chi_{i,\bar{p}}} \right) + \left\Vert \bar{d} \right\Vert_1 \left( \frac{\chi_{i,\bar{p}}}{1-\chi_{i,\bar{p}}} \right).
\]
Lemma \ref{l:overshoot_cond} and Remark \ref{rm:end_of_overshoot} still apply, but here $\tau_{i_{max}}$ has to be replaced with $\left\Vert \hat{\tau}_j \right\Vert_1$. \\
Finally, $\hat{\theta}_{i_1}$ is identified resorting to the methods presented in Section \ref{s:pred_ident}, and the estimated system matrices $\hat{A}\approx A$ and $\hat{B}\approx B$ are built as:
\[
\begin{array}{l}
\hat{A}=
\left[\begin{array}{c}
\hat{\theta}_{1_1}^{(1:n)}\\
\vdots\\
\hat{\theta}_{n_1}^{(1:n)}\\
\end{array}\right]
,\;
\hat{B}=
\left[\begin{array}{c}
	\hat{\theta}_{1_1}^{(n+1:m)}\\
	\vdots\\
	\hat{\theta}_{n_1}^{(n+1:m)}\\
\end{array}\right],
\end{array}
\]
where $\hat{\theta}_{i_1}^{(j:l)}$ denotes the elements of vector $\hat{\theta}_{i_1}$ from the $j$-th entry to the $l$-th entry.

\section{Simulation and experimental results}\label{s:results}
\subsection{Simulation results}
\label{s:sim_res}
We first assess the performance of the proposed identification procedure in a numerical example, and compare the results with those of  established identification approaches: the prediction error method (PEM), and the simulation error method (SEM). PEM approach identifies the model parameters by minimizing the squared $\ell_2$-norm of the one-step-ahead prediction error. SEM approach is based on the minimization of the squared $\ell_2$-norm of the simulation error, where the simulation of the system output is obtained by iteration of the prediction model, and corresponds to the unconstrained version of \eqref{eq:opt_prob_method_2}. More details can be found e.g. in \cite{soderstrom1989system} or \cite{tomita1992equation}. The numerical example analyzed here also gives insight on the procedures proposed in Section \ref{ss:estim_d_o_rho}. 

We consider the following one input, three outputs underdamped asymptotically stable system in continuous time $t$:
\begin{equation}
\label{eq:underdamp_sys_ss}
\begin{aligned}
\dot{x}(t) &= \begin{bmatrix} 0 & 0 & -160 \\ 1 & 0 & -24 \\ 0 & 1 & -10.8 \end{bmatrix} x(t)+\begin{bmatrix} 160 \\ 0 \\ 0 \end{bmatrix} u(t) \\
y(t)&=x(t)+d(t)
\end{aligned}
\end{equation}
\begin{figure}[thpb]
	\centering
	\includegraphics[width=1.0\columnwidth]{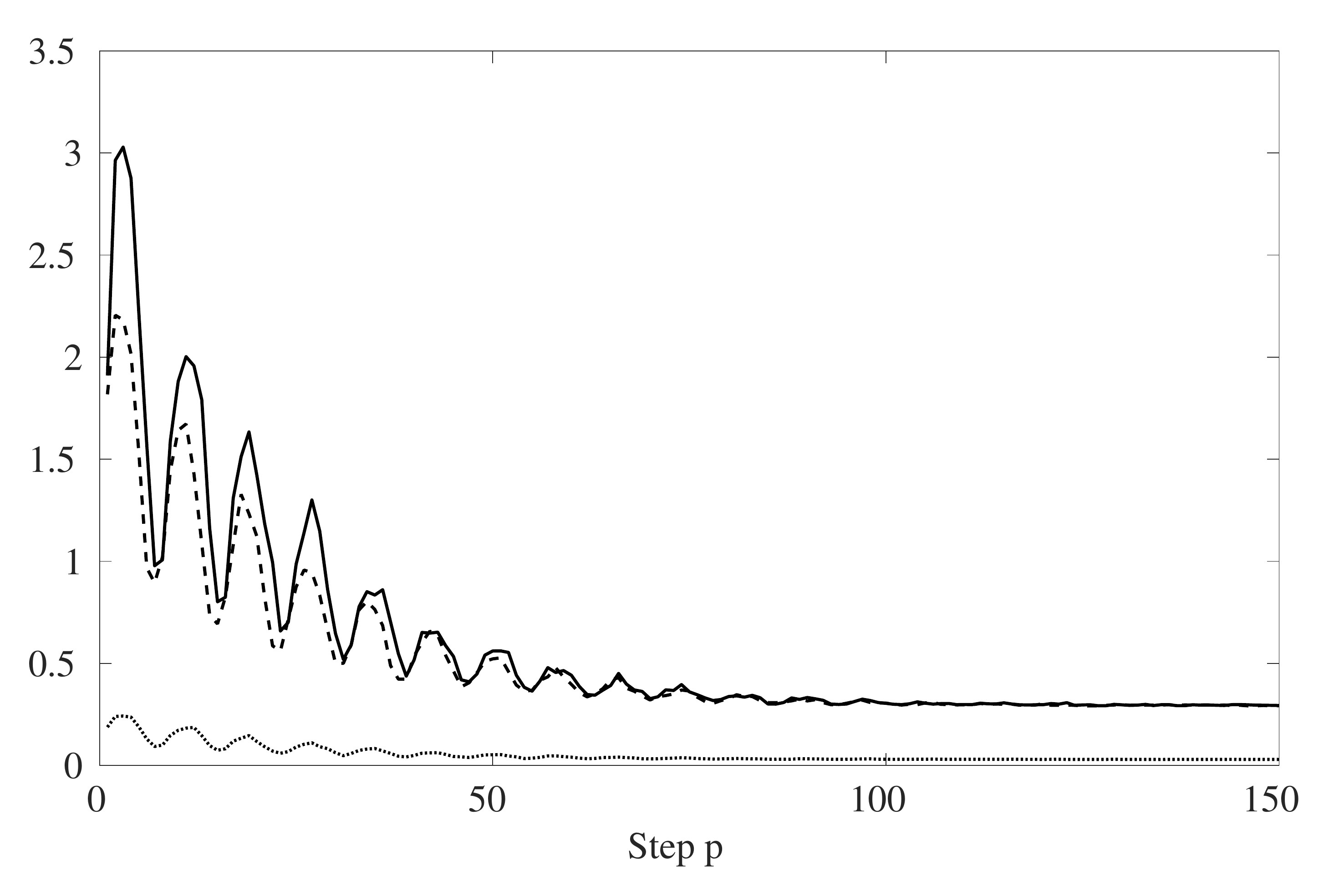}
	\caption{Numerical example: estimated values of $\underline{\lambda}_{i_p}$ with $\bar{d}=\left[0.7 \; 0.7 \; 0.07 \right]^T$ for the ARX predictor case. Solid line: $\underline{\lambda}_{1_p}$; dashed line: $\underline{\lambda}_{2_p}$; dotted line: $\underline{\lambda}_{3_p}$.}
	\label{f:lambda_for_est_d_ARX}
\end{figure}
The system eigenvalues are: $s_1=-10$ and $s_{2,3}=-0.4 \pm i 3.98$, and the output measurements are affected by uniformly distributed random noise, with $\bar{d}_{0}=[1\;1\;0.1]^T$. The input takes value in the set $\{-1; \; 0; \; 1 \}$ randomly every 4 time units. The considered data set is composed of 10000 input and output data points collected with a sampling frequency of 10 samples per time unit. The first half of the data set is used for the identification phase, while the second half is used for validation. 

%\begin{figure}[thpb]
%	\centering
%	\includegraphics[width=1.02\columnwidth]{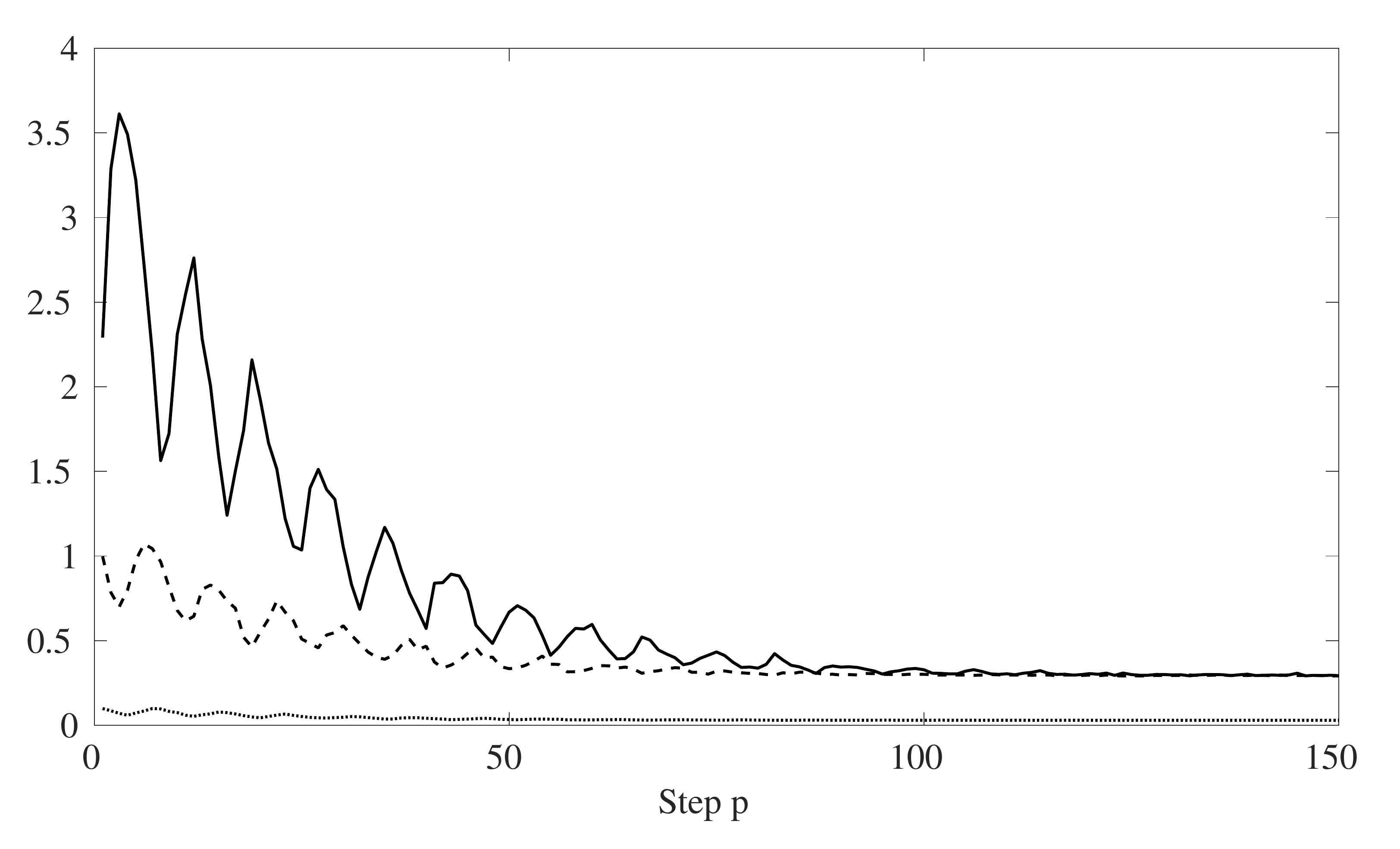}
%	\caption{Numerical example: estimated values of $\underline{\lambda}_{i_p}$ with $\bar{d}=\left[0.7 \; 0.7 \; 0.07 \right]^T$ for the state-space predictor case. Solid line: $\underline{\lambda}_{1_p}$; dashed line: $\underline{\lambda}_{2_p}$; dotted line: $\underline{\lambda}_{3_p}$.}
%	\label{f:lambda_for_est_d_SS}
%\end{figure} 
%
%
\begin{figure}[thpb]
	  \centering
      		\begin{tabular}{c}
     			\includegraphics[width=0.94\columnwidth]{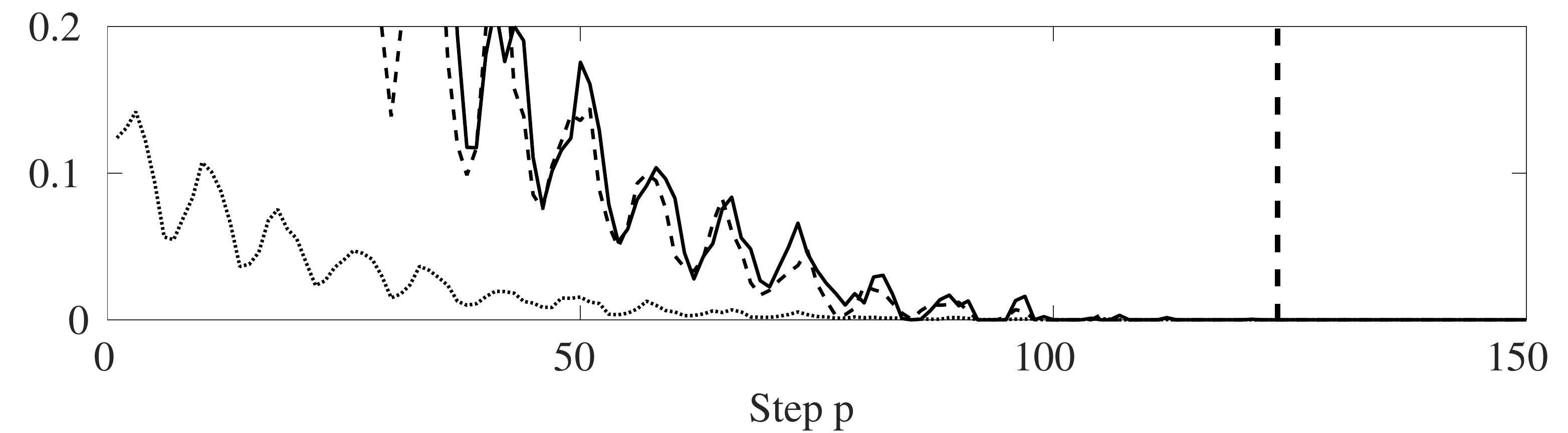} \\
     			\includegraphics[width=0.94\columnwidth]{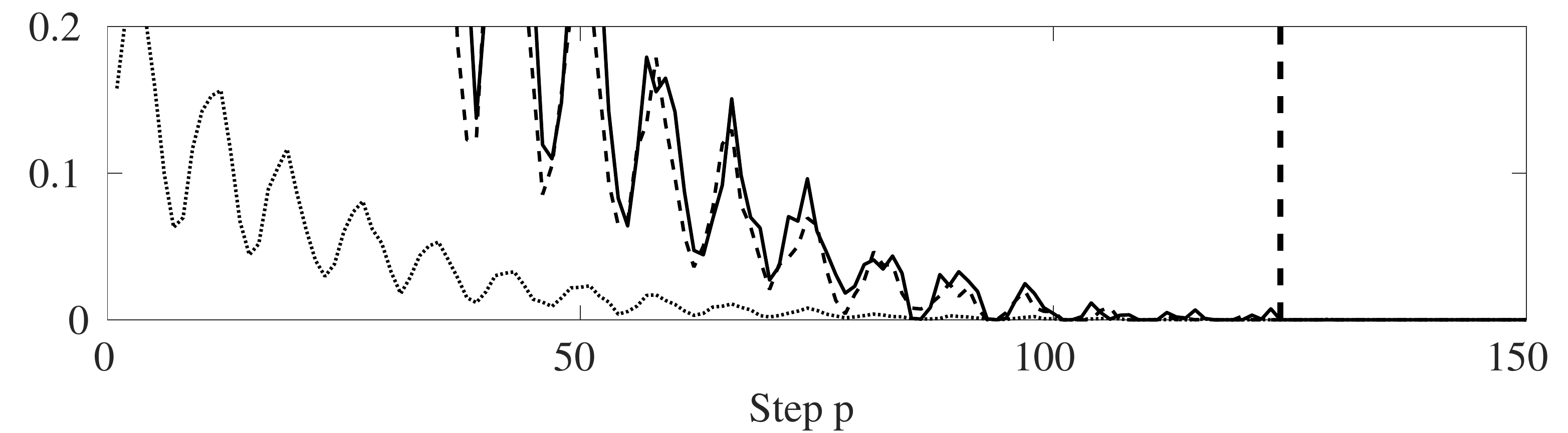} \\
     			\includegraphics[width=0.94\columnwidth]{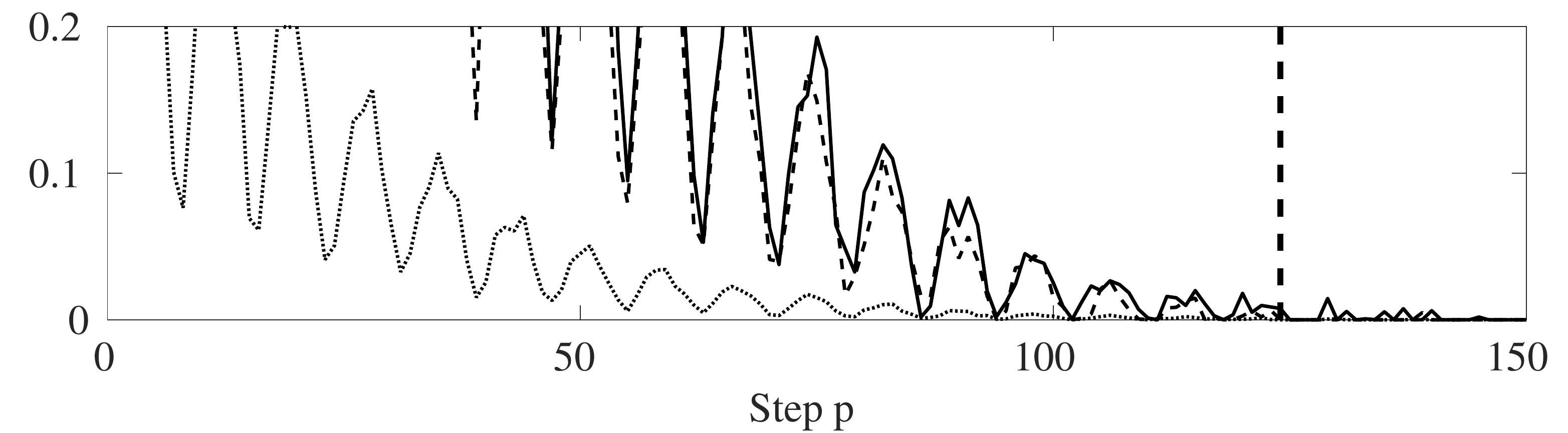}
      		\end{tabular}
      		\begin{tikzpicture}[baseline,overlay]
      			\node[font=\color{black}] at (-0.2,2.79) {\small (a)};
      			\node[font=\color{black}] at (-0.2,0.365) {\small (b)};
      			\node[font=\color{black}] at (-0.2,-2.05) {\small (c)};
      		\end{tikzpicture}
      \caption{Numerical example: estimated values of $\underline{\lambda}_{i_p}$ with $\bar{d}=\left[1 \; 1 \; 0.1 \right]^T$ for the ARX predictor case. Solid line: $\underline{\lambda}_{1_p}$; dashed line: $\underline{\lambda}_{2_p}$; dotted line: $\underline{\lambda}_{3_p}$. Fig. (a): $o=4$; fig. (b): $o=3$; fig. (c): $o=2$. The dashed vertical lines indicate the value of $\bar{p}=\max\limits_i{\bar{p}_i}$ obtained for the chosen $\bar{d}$, and are used to set $o$ at the lowest possible value such that all $\underline{\lambda}_{i_p}=0, \, \forall p>\bar{p}$.}
      \label{f:lambda_ARX_fun_o}
\end{figure} 
To carry out a complete analysis, we consider both the ARX model formulation and the state-space one. In Procedure \ref{p:d_bar_est_procedure}, we start with a guess of the noise bound  $\bar{d}=\left[0.7 \; 0.7 \; 0.07 \right]^T$, and compute the corresponding values of $\underline{\lambda}_{i_p}$, for $p \in [1,150]$, resorting to \eqref{eq:lambda_p_calc}. The results are depicted in Fig. \ref{f:lambda_for_est_d_ARX} for the ARX case; a similar behavior is obtained for the state-space model. As predicted by Theorem \ref{th:conv_lambda_diff_d}, $\underline{\lambda}_p$ converges to $\left[ 0.3 \; 0.3 \; 0.03 \right]^T$, which corresponds to $\bar{d}_{0_i}-\bar{d}_i$. Then, we set the noise bound to $\bar{d}=\left[1 \; 1 \; 0.1 \right]^T$, which is indeed consistent with the real one.
Fig. \ref{f:lambda_ARX_fun_o} depicts the results of Procedure \ref{p:o_est_procedure}. It correctly indicates $o=3$ as the minimum model order of the ARX predictors.
Then, we carry out Procedure \ref{p:decay_est_procedure} to estimate the parameters $\hat{L}_i$ and $\hat{\rho}_i$ of the exponentially decaying trend, see \eqref{eq:est_L_rho_optprob} and \eqref{eq:L_eff_arx}. For the ARX predictor, the resulting parameters are $\hat{L}=\left[3.094 \; 2.162 \; 0.259 \right]^T$ and $\hat{\rho}=\left[0.959 \; 0.959 \; 0.959 \right]^T$, while for the state-space case we obtain $\hat{L}=\left[3.982 \; 0.956 \; 0.092 \right]^T$ and $\hat{\rho}=\left[0.961 \; 0.965 \; 0.961 \right]^T$. Fig. \ref{f:Lr_ARX} shows the estimated decay bounds over the corresponding values of $\underline{\lambda}_{i_p}$ for the ARX model structure. Similar results are obtained for the state-space structure.
\begin{figure}[thpb]
	\centering
	\includegraphics[width=1.0\columnwidth]{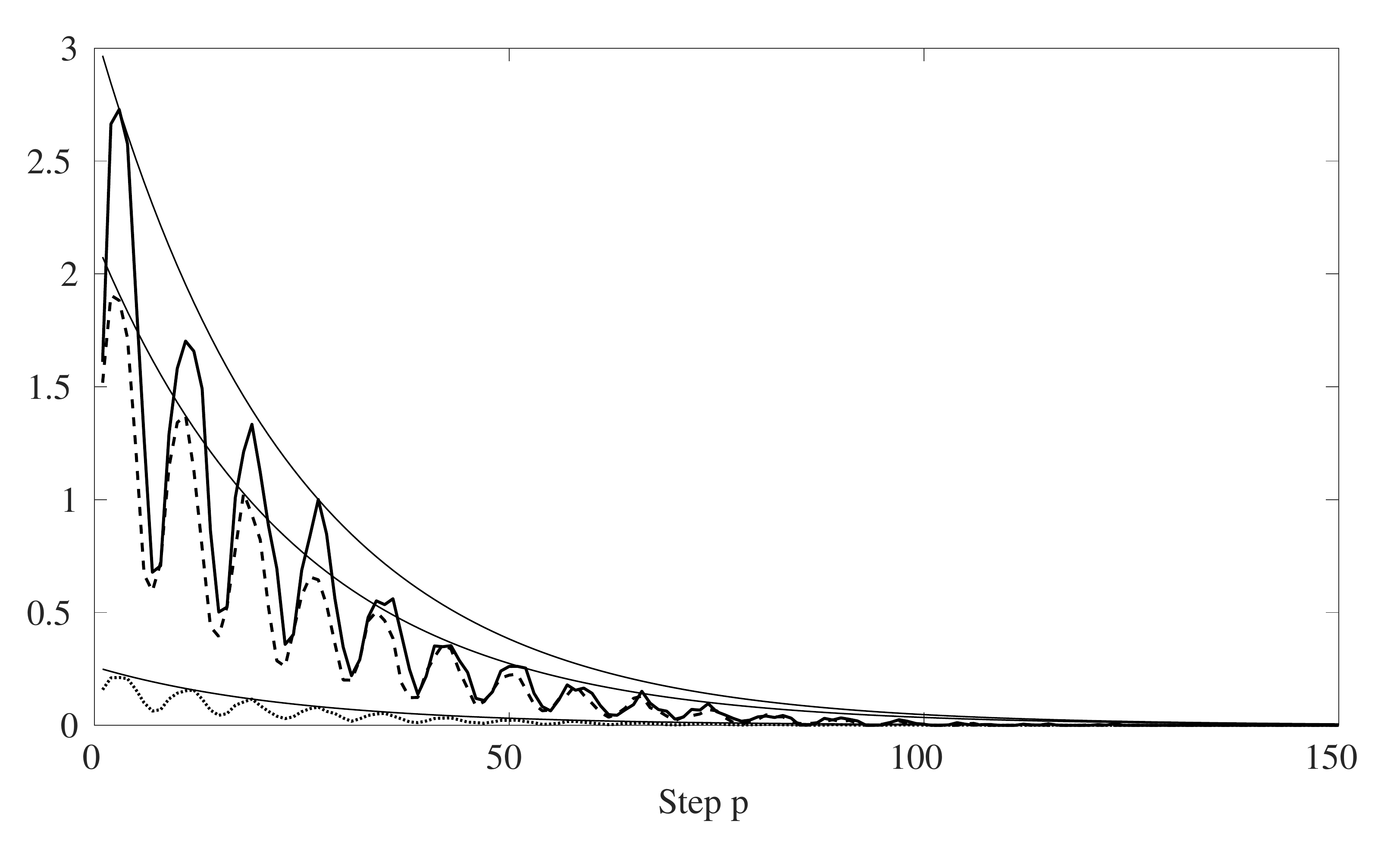}
	\caption{Numerical example: estimated values of $\underline{\lambda}_{i_p}$ and of the corresponding bound $\hat{L}_i\hat{\rho}_i^p$ for the ARX predictor case. Solid line: $\underline{\lambda}_{1_p}$; dashed line: $\underline{\lambda}_{2_p}$; dotted line: $\underline{\lambda}_{3_p}$. The exponentially decaying bounds are represented with thin continuous lines which lie over the corresponding $\underline{\lambda}_{i_p}$.}
	\label{f:Lr_ARX}
\end{figure} 
%%
%\begin{figure}[thpb]
%	\centering
%	\includegraphics[width=1.02\columnwidth]{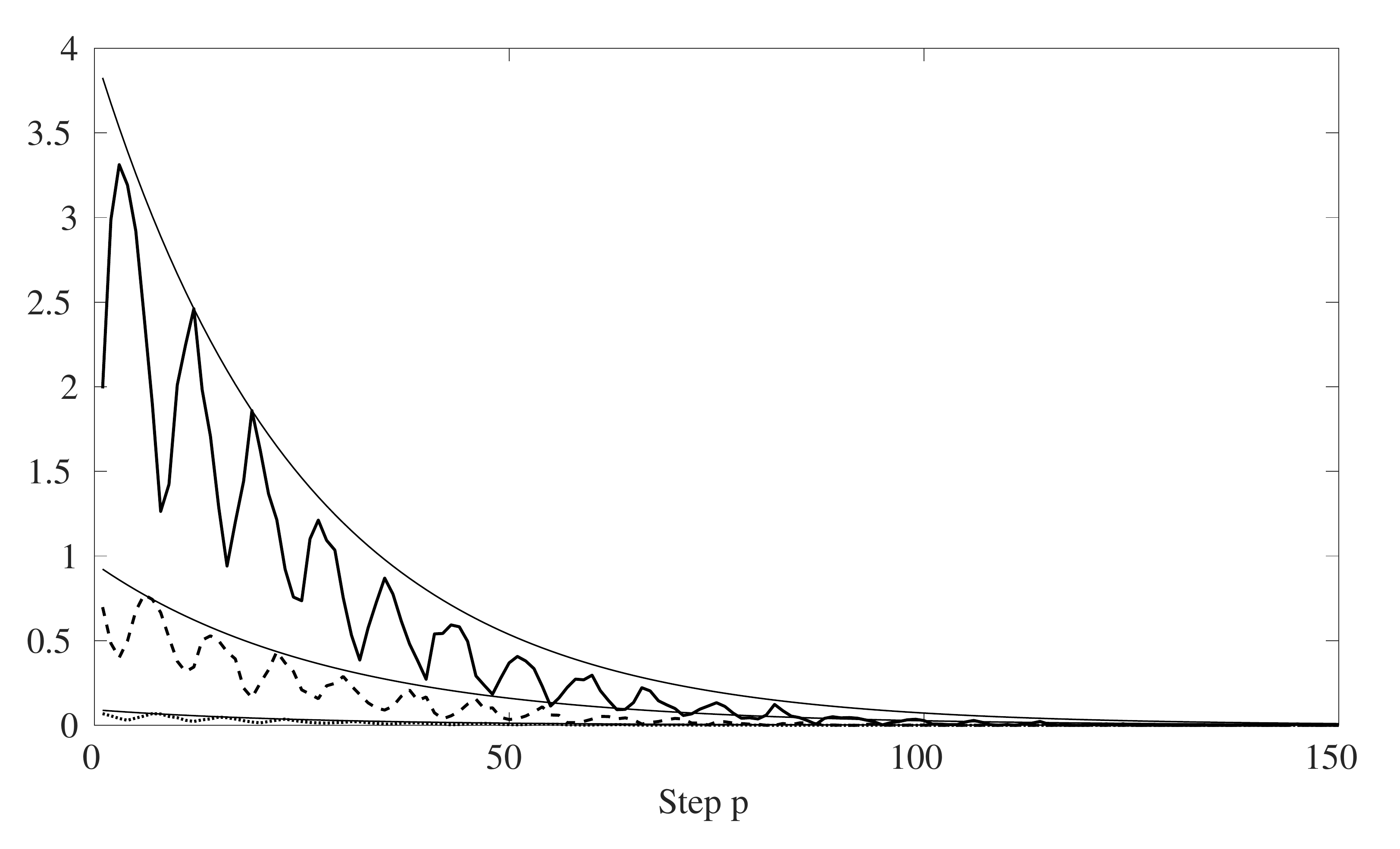}
%	\caption{Numerical example: estimated values of $\underline{\lambda}_{i_p}$ and of the corresponding bound $\hat{L}_i\hat{\rho}_i^p$ for the state-space predictor case. Solid line: $\underline{\lambda}_{1_p}$; dashed line: $\underline{\lambda}_{2_p}$; dotted line: $\underline{\lambda}_{3_p}$. The exponentially decaying bounds are represented with a thin continuous line which lies over the corresponding $\underline{\lambda}_{i_p}$.}
%	\label{f:Lr_SS}
%\end{figure} 
%
\begin{figure}[thpb]
	\centering
	\includegraphics[width=1.0\columnwidth]{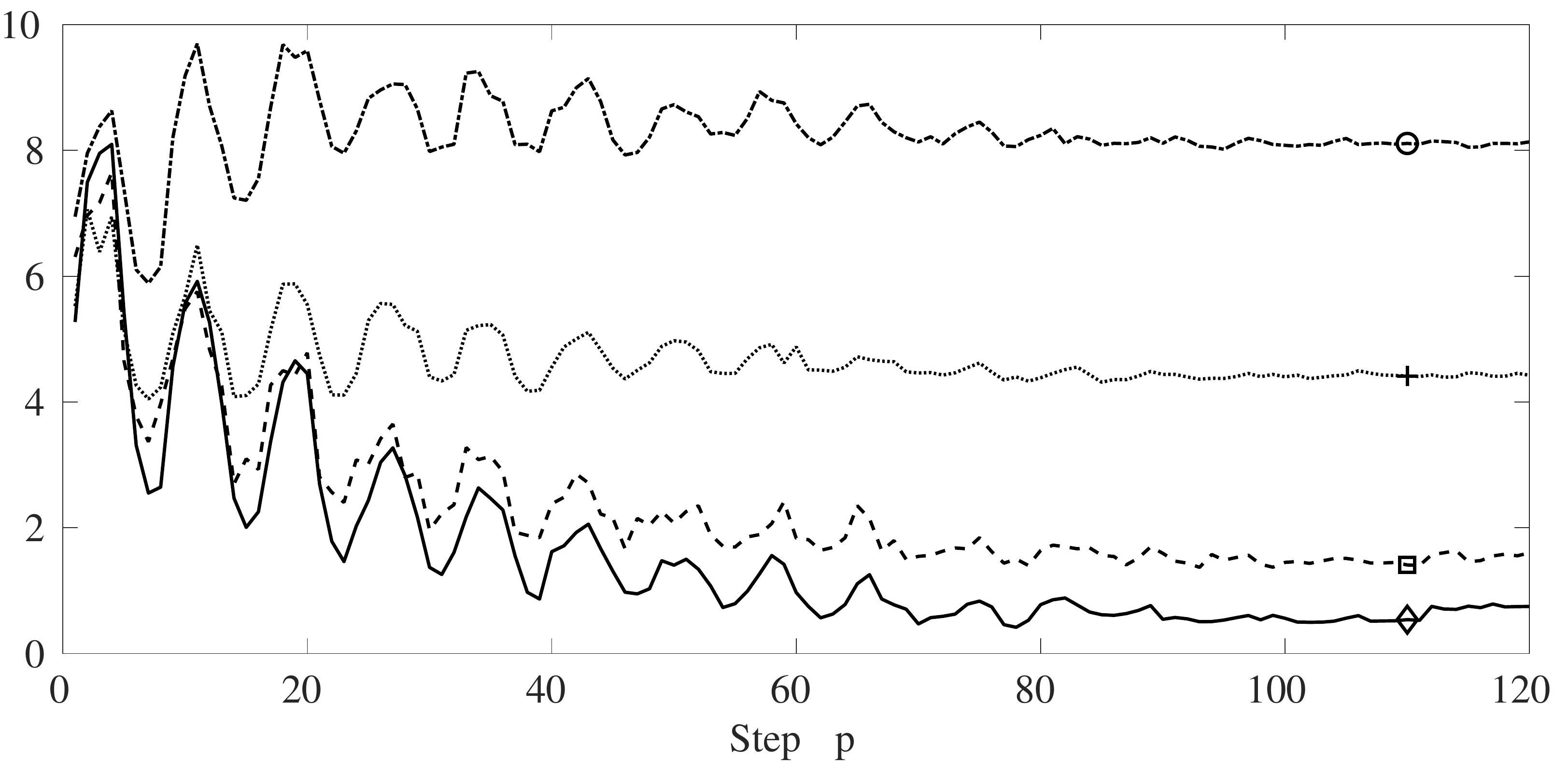}
	\caption{Numerical example: guaranteed simulation error bound $\hat{\tau}_{2_p}$ on $\hat{z}_2$ for the ARX predictor. Dotted line with `$+$': Method I; solid line with `$\diamond$': Method II; dashed line with `$\square$': SEM approach; dash-dot line with: `$\circ$': PEM approach.}
	\label{f:tau_G_ARX}
\end{figure}
%
%\begin{figure}[thpb]
%	\centering
%	\includegraphics[width=0.99\columnwidth]{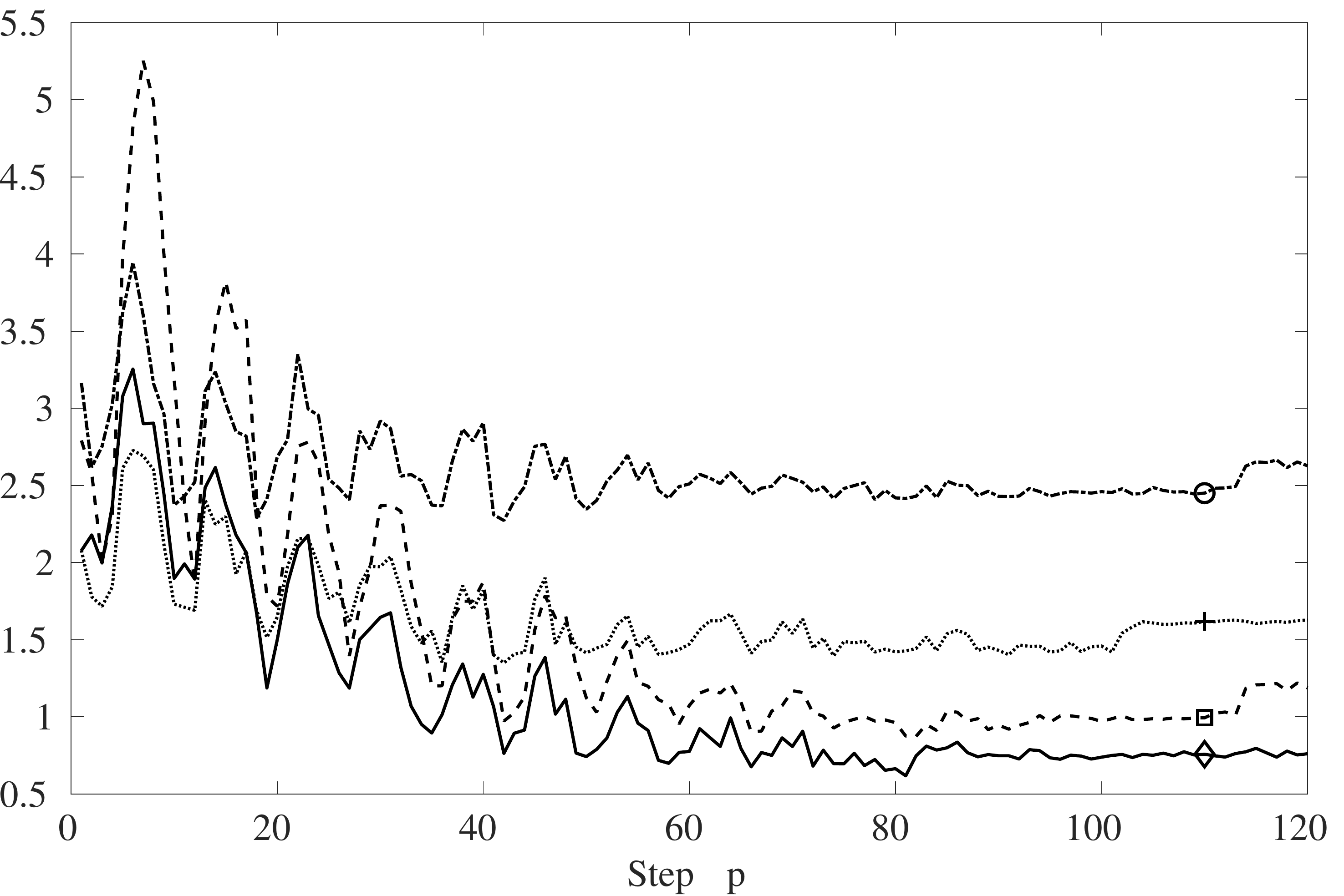}
%	\caption{Numerical example: guaranteed simulation error bound $\hat{\tau}_{2_p}$ on $\hat{z}_2$ for the state-space predictors. Dotted line with `$+$': Method I; solid line with `$\diamond$': Method II; dashed line with `$\square$': SEM approach; dash-dot line with: `$\circ$': PEM approach.}
%	\label{f:tau_G_SS}
%\end{figure}
%
\begin{table}[ht]
\caption{\label{t:G_tau_err_ARX}Numerical example: guaranteed simulation error bound and worst-case prediction error on validation data for the ARX predictor.}
\centering
\setlength\tabcolsep{3.5pt}
\small
\begin{tabular}{c c c c c c c c c c}
\toprule
 & & \multicolumn{4}{c}{i=1} & \multicolumn{4}{c}{i=3} \\
\cmidrule(lr){3-6}\cmidrule(lr){7-10}
 & $p$: & $1$ & $8$ & $19$ & $27$ &$1$ & $12$ & $35$ & $50$ \\ 
\midrule
\multirow{2}{*}{SEM} & $\tau_{i_p}$ & 8.11 & \textbf{2.72} & 8.13 & 6.10 & 0.76 & 1.20 & 0.45 & \textbf{0.22} \\
 & $e_{i_p}$ & 4.11 & \textbf{1.90} & \textbf{4.06} & \textbf{3.27} & 0.46 & 0.61 & 0.30 & 0.19 \\
\midrule
\multirow{2}{*}{Method II} & $\tau_{i_p}$ & \textbf{6.26} & 5.03 & \textbf{7.36} & \textbf{5.92} & \textbf{0.79} & \textbf{0.91} & \textbf{0.40} & 0.24 \\
 & $e_{i_p}$ & \textbf{3.15} & 4.01 & 4.17 & 3.40 & \textbf{0.36} & \textbf{0.39} & \textbf{0.24} & \textbf{0.18} \\
\bottomrule
\end{tabular} 
\normalsize
\end{table}
\begin{table}[ht]
\caption{\label{t:G_tau_err_SS}Numerical example: guaranteed simulation error bound and worst-case prediction error on validation data for the state-space predictor.}
\centering
\setlength\tabcolsep{3.5pt}
\small
\begin{tabular}{c c c c c c c c c c}
\toprule
 & & \multicolumn{4}{c}{i=1} & \multicolumn{4}{c}{i=3} \\
\cmidrule(lr){3-6}\cmidrule(lr){7-10}
 & $p$: & $1$ & $12$ & $35$ & $50$ &$1$ & $8$ & $19$ & $27$ \\ 
\midrule
\multirow{2}{*}{SEM} & $\tau_{i_p}$ & 9.97 & 13.9 & 6.34 & 3.45 & 0.27 & 0.64 & 0.29 & 0.31 \\
 & $e_{i_p}$ & 4.58 & 8.85 & 3.21 & 2.47 & 0.21 & 0.38 & 0.25 & 0.27 \\
\midrule
\multirow{2}{*}{Method II} & $\tau_{i_p}$ & \textbf{6.45} & \textbf{7.55} & \textbf{3.41} & \textbf{2.21} & \textbf{0.27} & \textbf{0.31} & \textbf{0.16} & \textbf{0.13} \\
 & $e_{i_p}$ & \textbf{3.03} & \textbf{3.54} & \textbf{2.04} & \textbf{1.76} & \textbf{0.18} & \textbf{0.19} & \textbf{0.15} & \textbf{0.13} \\
\bottomrule
\end{tabular} 
\normalsize
\end{table}
\begin{figure}[thpb]
	\centering
	\includegraphics[width=0.99\columnwidth]{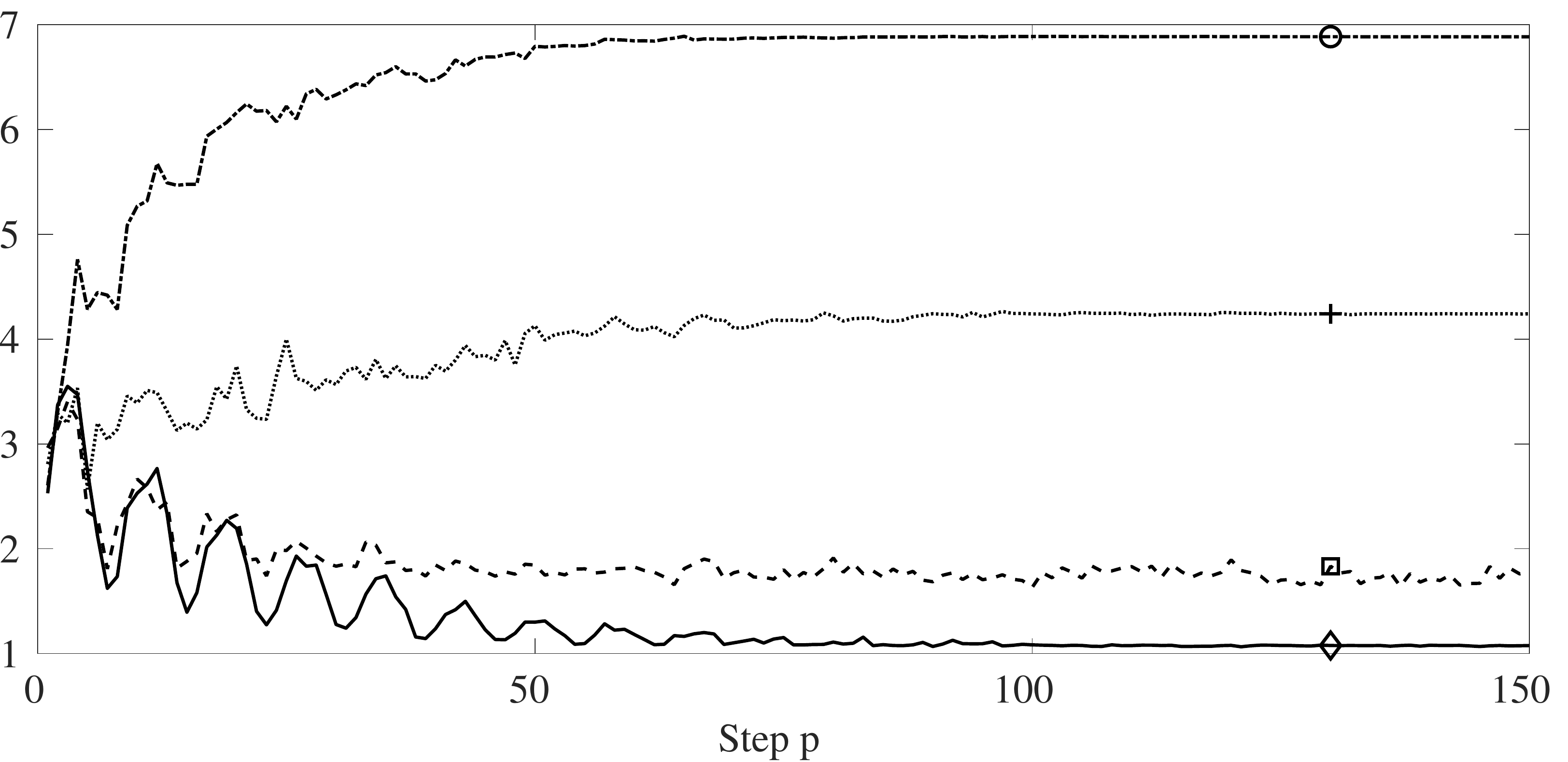}
	\caption{Numerical example: worst-case validation error $e_{2_p}$ on $\hat{z}_2$ for the ARX predictor. Dotted line with `$+$': Method I; solid line with `$\diamond$': Method II; dashed line with `$\square$': SEM approach; dash-dot line with: `$\circ$': PEM approach.}
	\label{f:err_G_ARX}
\end{figure}
%
%\begin{figure}[thpb]
%	\centering
%	\includegraphics[width=0.99\columnwidth]{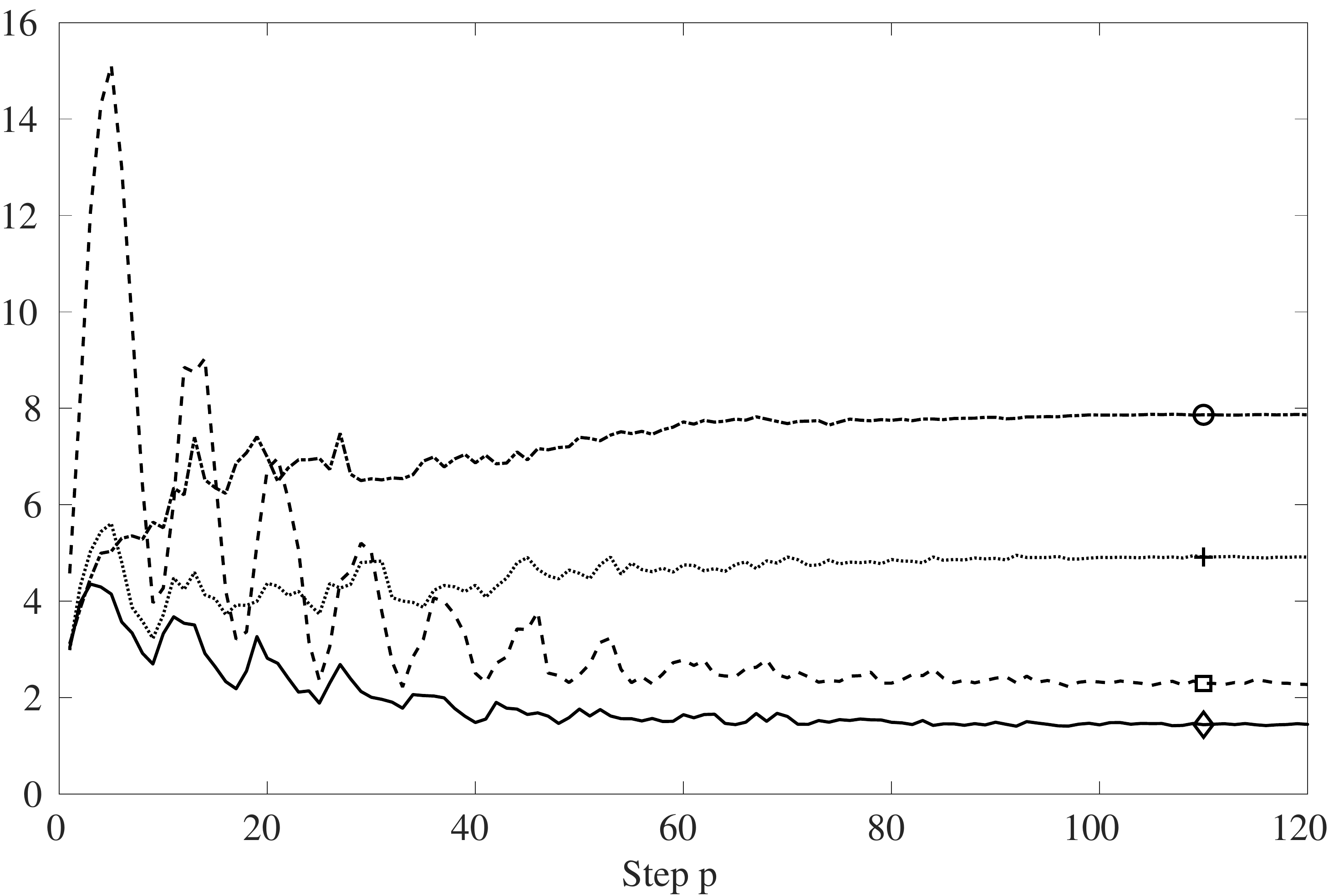}
%	\caption{Numerical example: validation error $e_{1_p}$ on $\hat{z}_1$ for the state-space predictor. Dotted line with `$+$': Method I; solid line with `$\diamond$': Method II; dashed line with `$\square$': SEM approach; dash-dot line with: `$\circ$': PEM approach.}
%	\label{f:err_G_SS}
%\end{figure}
%
%
The parameters of the predictors are eventually identified using Methods I and II, and the FPS are defined as in \eqref{eq:FPS_with_decay}, where $\hat{\bar{\varepsilon}}_{i_p}$ is obtained from $\underline{\lambda}_{i_p}$ with $\alpha=1.2$, see \eqref{eq:eps_hat_def}. As benchmark, we employ predictors identified using the PEM and SEM approaches. We compare the performance of the identified models in terms of guaranteed simulation error bounds $\hat{\tau}_{i_p}(\theta_{i_p})$, computed over the identification data set with $\gamma=1.1$, and of worst-case validation error, defined as:
\[
e_{i_p}=\max_{k=1,\hdots,N} \left\vert \tilde{y}_i(k+p)-\hat{z}_i(k+p) \right\vert
\]
and calculated over the validation data set. Fig. \ref{f:tau_G_ARX} depicts the obtained guaranteed error bounds related to the output $z_2$ for the identified ARX models, while Fig. \ref{f:err_G_ARX} presents the corresponding observed worst-case validation error.
%
%\begin{figure}[thpb]
%	\centering
%	\includegraphics[width=0.99\columnwidth]{Y2_sim_G_ARX}
%	\caption{Numerical example: simulated output $\hat{z}_2$ with the ARX predictor. Black solid line: measured output $\tilde{y}_2$; red solid line: real system output $z_2$; dashed line: simulated output with SEM predictor; dash-dotted line: simulated output with PEM predictor; dotted line: simulated output with Method II predictor.}
%	\label{f:Y_sim_G_ARX}
%\end{figure}
%%
%\begin{figure}[thpb]
%	\centering
%	\includegraphics[width=0.99\columnwidth]{Y2_sim_G_ARX_zoom}
%	\caption{Numerical example: simulated output $\hat{z}_2$ with the ARX predictor. Black solid line: measured output $\tilde{y}_2$; red solid line: real system output $z_2$; dashed line: simulated output with SEM predictor; dash-dotted line: simulated output with PEM predictor; dotted line: simulated output with Method II predictor.}
%	\label{f:Y_sim_G_ARX_zoom}
%\end{figure}
%
%
\begin{figure}[thpb]
	  \centering
      		\begin{tabular}{c}
     			 \small (a) \normalsize \\ \includegraphics[width=0.92\columnwidth]{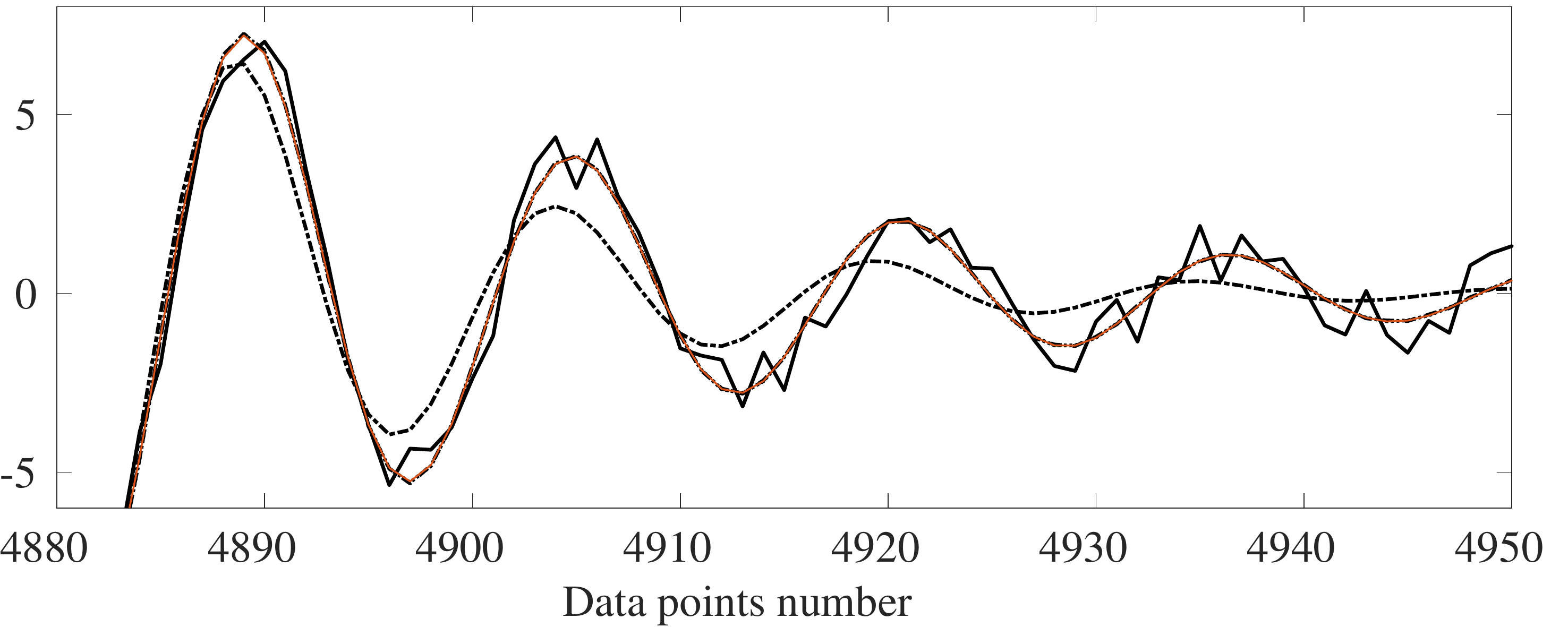} \\ 
     			 \small (b) \normalsize \\ \includegraphics[width=0.92\columnwidth]{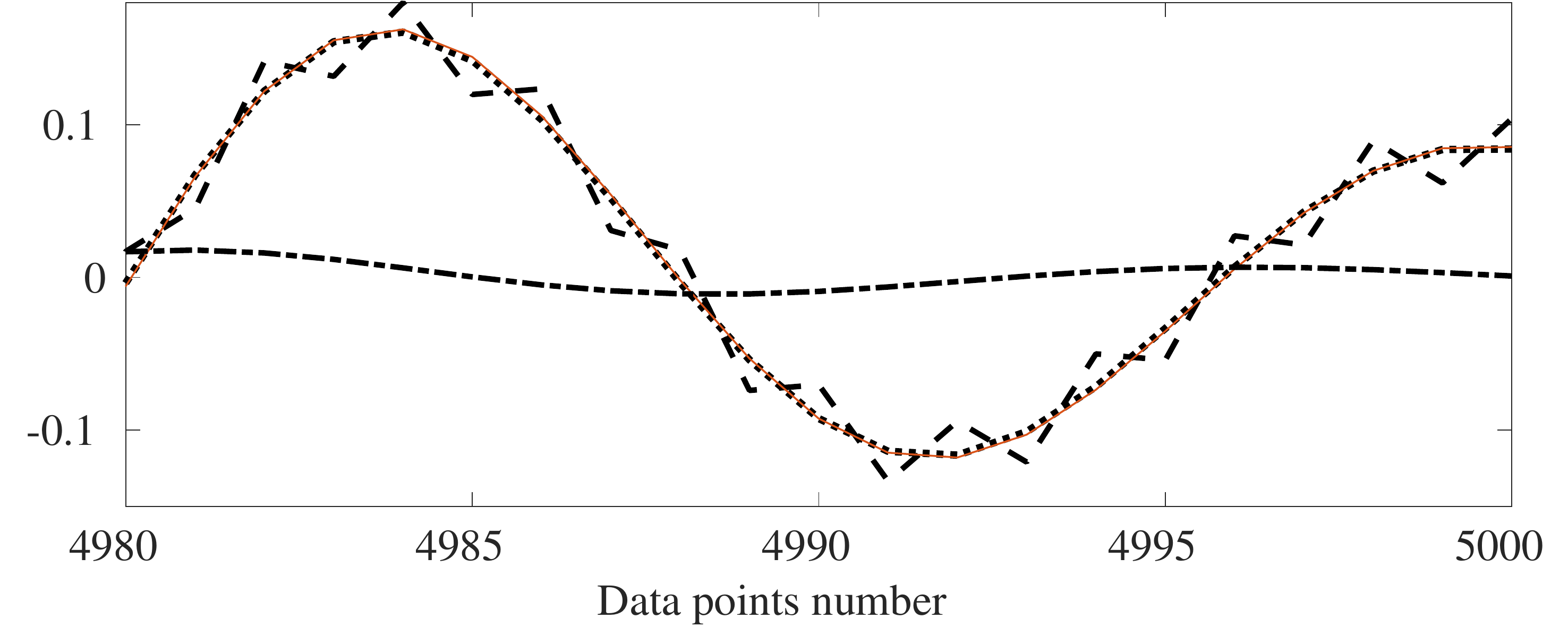}
      		\end{tabular}
      \caption{Numerical example, Fig. (a): simulated output $\hat{z}_2$ with the ARX predictor; Fig. (b): detailed view. Black solid line: measured output $\tilde{y}_2$; red solid line: real system output $z_2$; dashed line: simulated output with SEM predictor; dash-dotted line: simulated output with PEM predictor; dotted line: simulated output with Method II predictor.}
      \label{f:Y_sim_G_ARX}
\end{figure} 
It can be noted that the model identified with Method I achieves (as expected from the employed  cost criterion) the smallest worst-case (over $p$) guaranteed error bound, however at the cost of a higher guaranteed bound for longer horizon, as compared to Method II and SEM. Qualitatively similar outcomes are obtained for the other outputs and for the state-space model structure. More values of $\hat{\tau}_{i_p}$ and $e_{i_p}$ are reported in Tables \ref{t:G_tau_err_ARX} and \ref{t:G_tau_err_SS}. These results indicate that the proposed identification Method II has comparable, and often better, performance with respect to the SEM approach, in terms of both error bound and observed validation error, and overall better performance than the other two approaches. In particular, we notice that the predictor identified using Method II has good performance in long-range simulation, as the SEM approach, but also with better performance for short horizon values, outperforming the SEM. In particular, Fig. \ref{f:err_G_ARX} and Tables \ref{t:G_tau_err_ARX} and \ref{t:G_tau_err_SS} show how the predictor identified using Method II is able to provide small one-step-ahead prediction error, as the PEM approach, and small simulation error, as the SEM approach, combining the advantages of the two identification approaches. This is possible thanks to the constraints  $\theta_{i_1}\in \Theta_{i_1}^{L\rho}$ in \eqref{eq:opt_prob_method_2}, which are able to improve the performance over the SEM approach in terms of one-step-ahead prediction error. 
\begin{figure}[thpb]
	  \centering
      		\begin{tabular}{c}
     			 \small (a) \normalsize \\ \includegraphics[width=0.98\columnwidth]{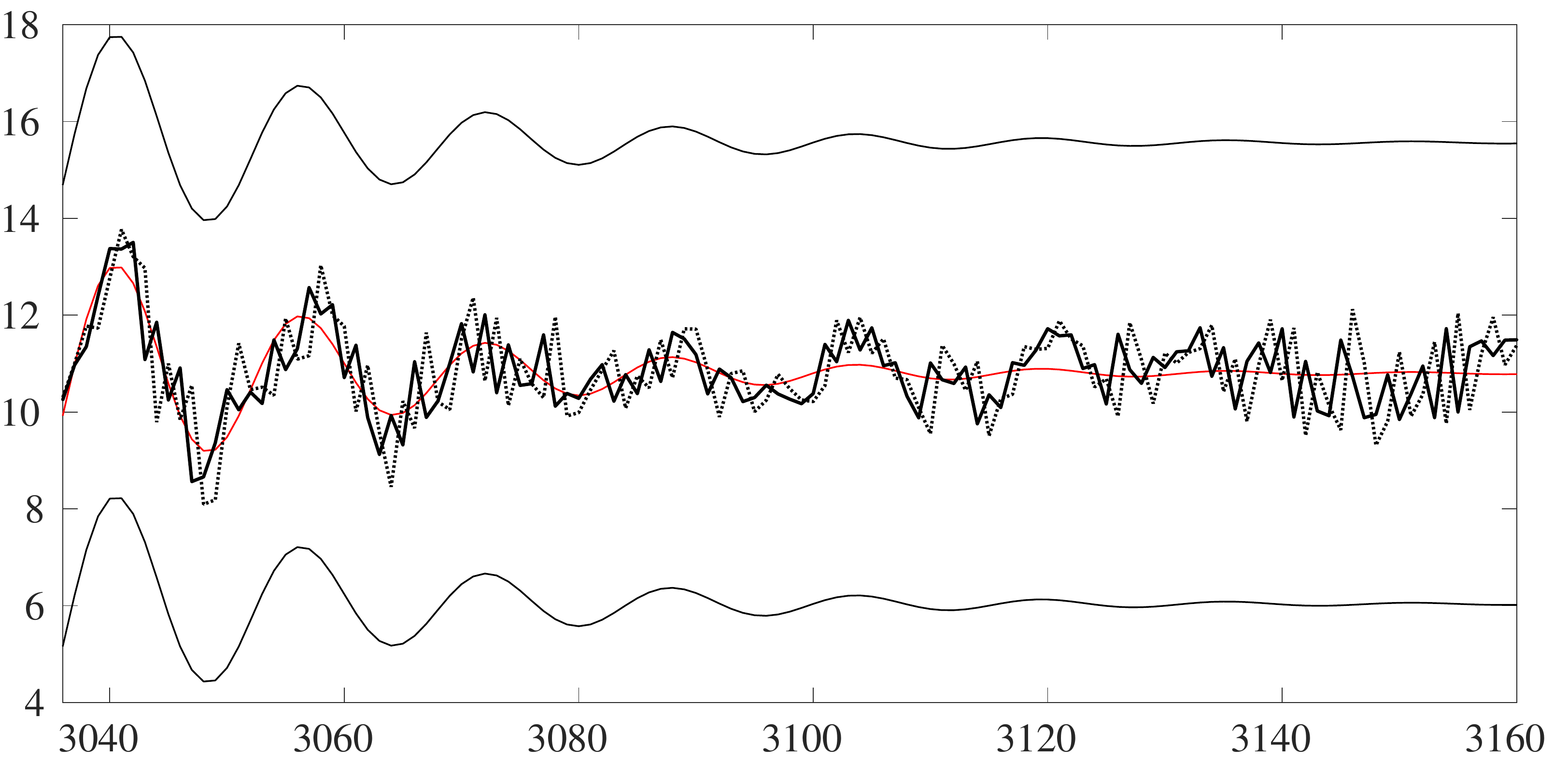} \\ 
     			 \small (b) \normalsize \\ \includegraphics[width=0.98\columnwidth]{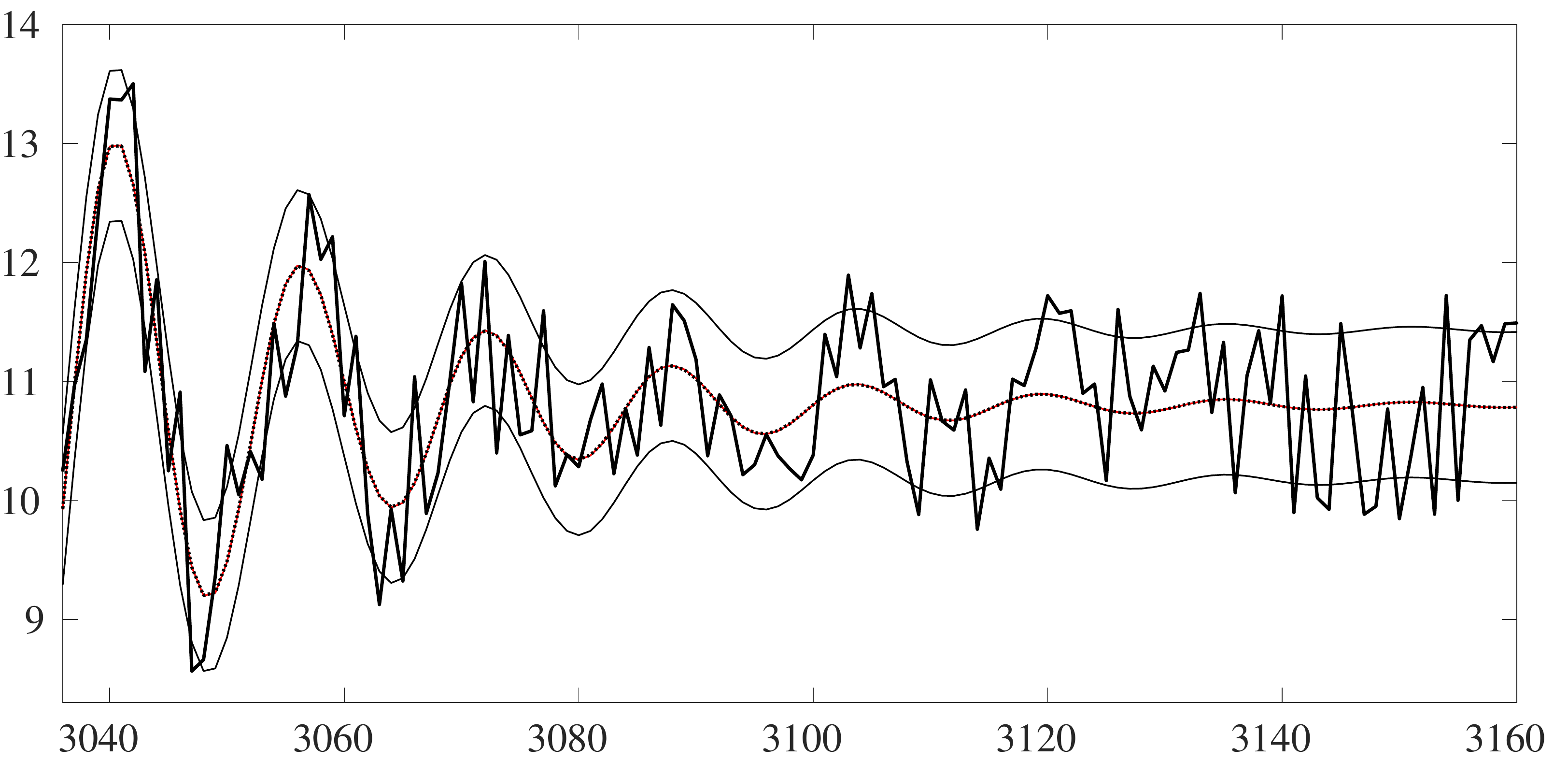}
      		\end{tabular}
      \caption{Numerical example, Fig. (a): one-step prediction of output $\hat{z}_2$ with the ARX predictor ($p=1$); Fig. (b): simulated output $\hat{z}_2$ with the ARX predictor. Black solid line: measured output $\tilde{y}_2$; red solid line: real system output $z_2$; dashed line: predicted/simulated output with Method II predictor; thin black lines: Method II predictor error bounds.}
      \label{f:Y_bound_G_ARX}
\end{figure}
The model identified using Method I, on the other hand, obtains a lower simulation error for short horizon with respect to the other approaches, at the cost of a higher simulation error for longer horizon. This stems from the fact that we are minimizing the worst-case error over the whole horizon. Using a quadratic cost in \eqref{eq:opt_prob_method_1_large} in order to minimize the average error, as commented in Remark \ref{rm:id_algo}, could partly improve this issue.

Besides the worst-case performance, Tables \ref{t:G_ARX_RMSE} and \ref{t:G_SS_RMSE} present exemplifying values of the root mean squared error (RMSE) for the predictors obtained using different identification methods, having respectively an ARX and a state-space formulation. The RMSE is calculated over the validation data set as:
\[
\text{RMSE}=\sqrt{\frac{\sum_{k=1}^N \Big( \tilde{y}_i(k+p)-\hat{z}_{i}(k+p) \Big)^2}{N}},
\]
i.e. it considers the $p$-steps-ahead simulation error. The results in the tables confirm the good  performance of Method II, since the obtained predictor yields better (for short horizon) or similar RMSE as compared with SEM. The predictor identified with Method I has good performance for short simulation horizons, but its error increases for longer ones. %These results indicate again that Method II outperforms the SEM approach, with a minor increase in computational complexity, thanks to the combination of good long-range simulation performance, coming from the SEM approach, and good one-step-ahead prediction performances, coming from the usage of the FPS $\Theta_{i_1}^{L\rho}$.
\begin{figure}[thpb]
	\centering
	\includegraphics[width=0.99\columnwidth]{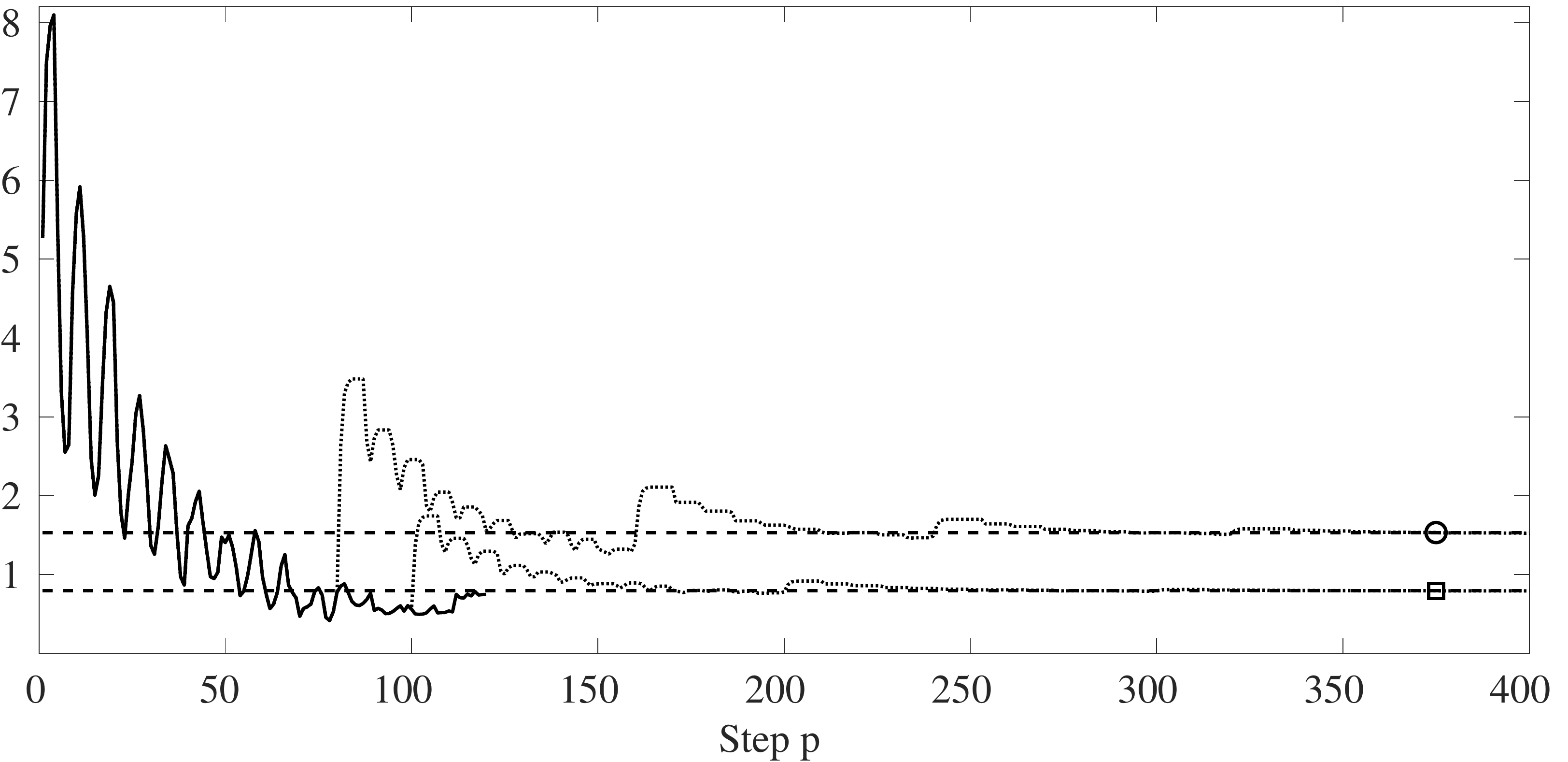}
	\caption{Numerical example: infinite-horizon error bound $\hat{\tau}_{2_p}$ for the ARX predictor identified using Method II. Solid line: bound calculated using \eqref{eq:tau_hat_global_def} for $p \in [1,120]$; dotted line with `$\circ$': iterative bound \eqref{eq:tau_arx_pgrande_iter} with $\bar{p}=80$; dashed line with `$\circ$': infinite-horizon bound \eqref{eq:tau_inf_ARX_def} with $\bar{p}=80$; dotted line with `$\square$': iterative bound with $\bar{p}=100$; dashed line with `$\square$': infinite-horizon bound with $\bar{p}=100$.}
	\label{f:tau_iter_G_ARX}
\end{figure}
\begin{table}[ht]
\caption{\label{t:G_ARX_RMSE}Numerical example, validation data: Root Mean Square Error for $p$-step-ahead prediction and simulation for ARX models.}
\centering
\setlength\tabcolsep{3.5pt}
\small
\begin{tabular}{c c c c c c c c}
\toprule
\multicolumn{2}{c}{RMSE} & $p=1$ & $p=10$ & $p=20$ & $p=30$ & $p=60$ & sim \\ 
\midrule
 & $y_1$ & 5.539 & 21.42 & 26.32 & 27.77 & 30.27 & 30.56 \\
PEM & $y_2$ & \textbf{0.930} & 1.287 & 1.636 & 1.775 & 1.923 & 1.937 \\ 
 & $y_3$ & 0.097 & 0.179 & 0.222 & 0.234 & 0.246 & 0.248 \\
\midrule
 & $y_1$ & 1.523 & 1.651 & 1.366 & \textbf{0.728} & \textbf{0.620} & \textbf{0.580} \\
SEM & $y_2$ & 1.018 & \textbf{0.935} & 0.787 & 0.667 & 0.661 & 0.577 \\ 
 & $y_3$ & 0.159 & 0.185 & 0.163 & 0.082 & 0.065 & \textbf{0.059} \\ 
\midrule
 & $y_1$ & \textbf{0.979} & 1.661 & 1.431 & 1.344 & 1.403 & 1.411 \\
Method I & $y_2$ & 0.946 & 0.987 & 0.983 & 1.016 & 1.148 & 1.176 \\ 
 & $y_3$ & \textbf{0.095} & \textbf{0.101} & \textbf{0.100} & 0.102 & 0.109 & 0.119 \\ 
\midrule
 & $y_1$ & 1.178 & \textbf{1.278} & \textbf{1.082} & 0.894 & 0.898 & 0.897 \\
Method II & $y_2$ & 0.978 & 0.941 & \textbf{0.750} & \textbf{0.589} & \textbf{0.577} & \textbf{0.573} \\ 
 & $y_3$ & 0.130 & 0.134 & 0.106 & \textbf{0.064} & \textbf{0.060} & \textbf{0.059} \\ 
\bottomrule
\end{tabular} 
\normalsize
\end{table}

Fig. \ref{f:Y_sim_G_ARX} presents an example of time-course of the system output $z_2$, comparing the real, measured and simulated values. In the detailed view of Fig. \ref{f:Y_sim_G_ARX} (b) it is possible to appreciate how the simulation obtained using Method II predictor overlaps the true system output $z_2$. Fig. \ref{f:Y_bound_G_ARX} displays an other example of time-course of the system output, comparing the real and measured values with the one-step-ahead prediction, Fig. \ref{f:Y_bound_G_ARX} (a), and with the long-range simulation, Fig. \ref{f:Y_bound_G_ARX} (b), reporting in both cases the corresponding error bounds. From Fig. \ref{f:Y_bound_G_ARX} (b) it is possible to notice that the guaranteed error bound for the long-range simulation case is smaller than the amplitude of the noise $d$. Thus, the distance of $\tilde{y}_2$ from $z_2$ is often greater then the error bound of $\hat{z}_2$. Fig. \ref{f:tau_iter_G_ARX} depicts the comparison between the simulation error bound $\hat{\tau}_{2_p}$ calculated using the definition \eqref{eq:tau_hat_global_def} for $p\in[1,120]$, the iterative error bound \eqref{eq:tau_arx_pgrande_iter} and the infinite-horizon error bound \eqref{eq:tau_inf_ARX_def}, obtained setting $\bar{p}=80$ and $\bar{p}=100$, for the case of the predictor having an ARX structure, identified using Method II. Here, it is possible to notice that the iterative and the infinite-horizon error bounds become a tighter upper-bound of $\hat{\tau}_{2_p}$, obtained from its definition, as $\bar{p}$ increases. \\
Fig. \ref{f:Tau_var_alpha} shows the effects of the choice of $\alpha$ in \eqref{eq:eps_hat_def} on the identification performance. Here, different values of $\alpha$ are used, %to account for the usage of a finite data-set, 
repeating the identification procedure using Method II, and computing the simulation error bound $\hat{\tau}_{2_p}$ for the obtained models. It is possible to see that for $\alpha=1$ the obtained FPS is too small, resulting in a validation error $e_{2_p}$ that violates the provided error bound, as motivated by Remark \ref{rm:bounds_conservativeness}. Moreover, we can see that, with a smaller $\alpha$, the constraint $\theta_{i_1}\in \Theta_{i_1}^{L\rho}$ provides a reduced error for short prediction horizons, at the price of an increase of the error for longer horizons, whereas a bigger value of $\alpha$ obtains the opposite effect. Table \ref{t:G_ARX_RMSE_var_alpha} presents the RMSE obtained by models identified using Method II with different values of $\alpha$. Here, it is possible to appreciate that a small increase of $\alpha$ reduces the simulation RMSE, but the improvements significantly reduce after a certain value (e.g. $\alpha=1.2$ for $y_1$ and $y_3$), making it useless to choose a greater $\alpha$, which will only provide an increase in the one-step-ahead error, as shown from Fig. \ref{f:Tau_var_alpha}.

\begin{figure}[thpb]
	\centering
	\includegraphics[width=1.0\columnwidth]{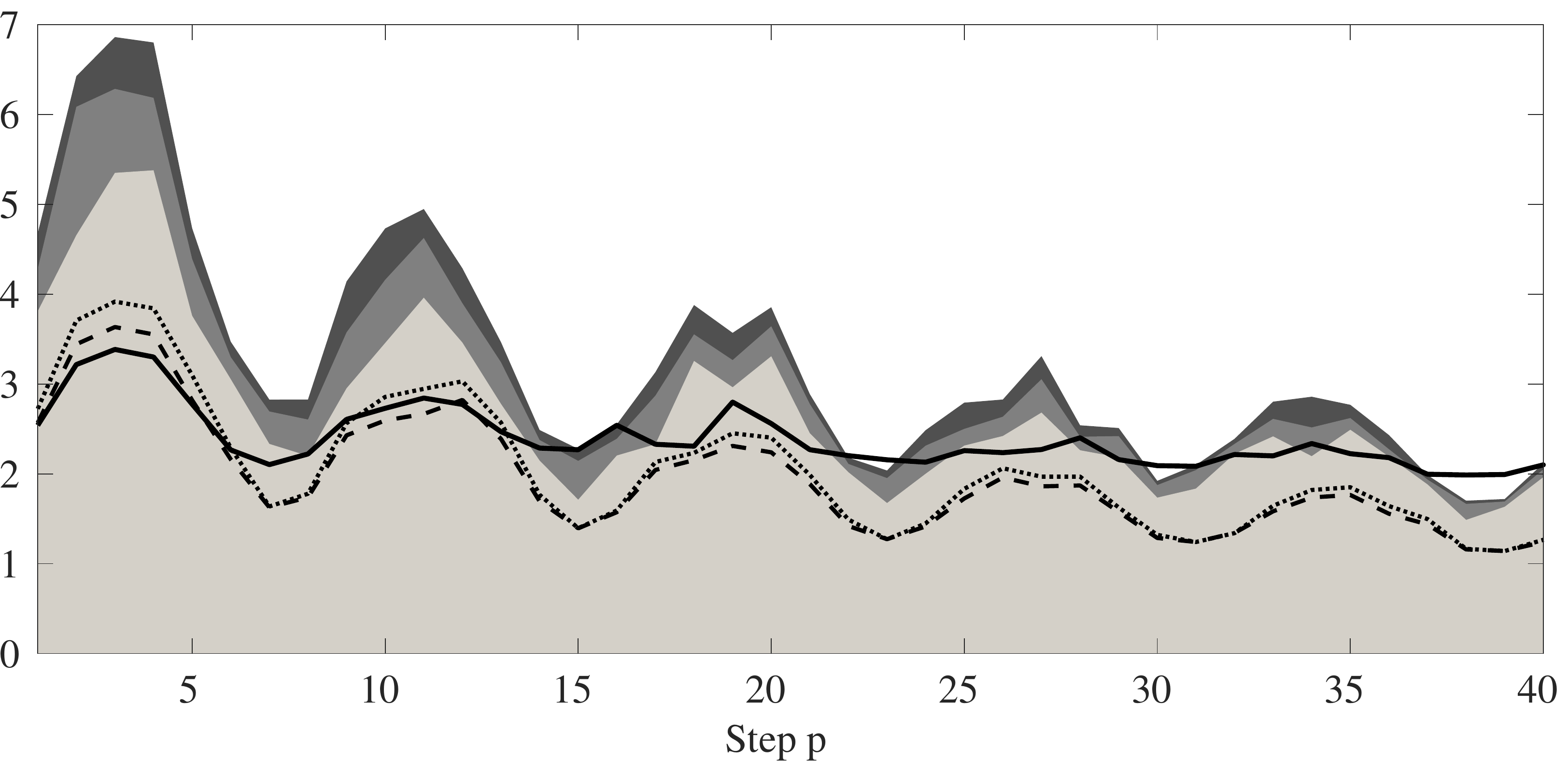}
	\caption{Numerical example: worst-case validation error $e_{2_p}$ and guaranteed simulation error bound $\hat{\tau}_{2_p}$ on $\hat{z}_2$ for the ARX predictor identified using Method II for different values of $\alpha$. Solid line: $e_{2_p}$ for $\alpha=1.0$; dashed line: $e_{2_p}$ for $\alpha=1.1$; dotted line: $e_{2_p}$ for $\alpha=1.2$; light gray area: $\hat{\tau}_{2_p}$ for $\alpha=1.0$; medium gray area: $\hat{\tau}_{2_p}$ for $\alpha=1.1$; dark gray area: $\hat{\tau}_{2_p}$ for $\alpha=1.2$.}
	\label{f:Tau_var_alpha}
\end{figure} 
\begin{table}[ht]
\caption{\label{t:G_ARX_RMSE_var_alpha}Numerical example, validation data: simulation Root Mean Square Error for ARX models for different values of $\alpha$.}
\centering
\setlength\tabcolsep{4.75pt}
\small
\begin{tabular}{c c c c c c c c c}
\toprule
RMSE & $\alpha$: & $1.0$ & $1.05$ & $1.1$ & $1.15$ & $1.2$ & $1.25$ & $1.3$ \\ 
\midrule
& $y_1$ & 2.67 & 1.89 & 1.70 & 1.50 & 1.29 & 1.27 & 1.27 \\
Method II & $y_2$ & 0.70 & 0.57 & 0.57 & 0.57 & 0.57 & 0.57 & 0.57\\
& $y_3$ & 0.14 & 0.08 & 0.07 & 0.07 & 0.06 & 0.06 & 0.06\\
\bottomrule
\end{tabular} 
\normalsize
\end{table}

Finally, Tables \ref{t:G_eig} and \ref{t:G_sys_matrix} report a comparison between the eigenvalues and the $A$ and $B$ matrices of the discrete-time system, obtained applying the trapezoid approximation rule to \eqref{eq:underdamp_sys_ss}, and those of the model identified with the state-space predictors, for the various identification approaches.
\begin{table}[ht]
\caption{\label{t:G_SS_RMSE}Numerical example, validation data: Root Mean Square Error for $p$-step-ahead prediction and simulation for state-space models.}
\centering
\setlength\tabcolsep{3.5pt}
\small
\begin{tabular}{c c c c c c c c}
\toprule
\multicolumn{2}{c}{RMSE} & $p=1$ & $p=10$ & $p=20$ & $p=30$ & $p=60$ & sim \\ 
\midrule
 & $y_1$ & \textbf{1.041} & 1.437 & 1.790 & 1.947 & 2.181 & 2.214 \\
PEM & $y_2$ & \textbf{0.657} & 0.673 & 0.731 & 0.776 & 0.821 & 0.832 \\ 
 & $y_3$ & \textbf{0.067} & 0.067 & 0.070 & 0.073 & 0.078 & 0.078 \\
\midrule
 & $y_1$ & 1.501 & 1.342 & 2.085 & 1.422 & 0.747 & 0.627 \\
SEM & $y_2$ & 0.824 & 0.907 & 0.612 & 0.663 & 0.604 & 0.607 \\ 
 & $y_3$ & 0.073 & 0.099 & 0.079 & 0.079 & 0.075 & 0.075 \\ 
\midrule
 & $y_1$ & 1.067 & 1.110 & 1.272 & 1.184 & 1.242 & 1.260 \\
Method I & $y_2$ & 0.716 & 0.646 & 0.630 & 0.638 & 0.647 & 0.654 \\ 
 & $y_3$ & \textbf{0.067} & \textbf{0.065} & 0.061 & 0.062 & 0.063 & 0.064 \\ 
\midrule
 & $y_1$ & 1.061 & \textbf{1.069} & \textbf{1.026} & \textbf{0.730} & \textbf{0.604} & \textbf{0.584} \\
Method II & $y_2$ & 0.726 & \textbf{0.620} & \textbf{0.600} & \textbf{0.600} & \textbf{0.582} & \textbf{0.584} \\ 
 & $y_3$ & 0.069 & \textbf{0.065} & \textbf{0.060} & \textbf{0.060} & \textbf{0.059} & \textbf{0.059} \\ 
\bottomrule
\end{tabular} 
\normalsize
\end{table}

\begin{table}[ht]
	\caption{\label{t:G_eig}Numerical example: real and identified system eigenvalues.}
	\centering
	\setlength\tabcolsep{3.5pt}
	\small
	\begin{tabular}{c c}
		\toprule
		& Eigenvalues \\ 
		\midrule
		True system (trapezoid approximation)& $0.889\pm i 0.369 \, , \; 0.333$ \\
		\midrule
		PEM (state-space predictor) & $0.877\pm i0.369\, , \; 0.020$ \\
		\midrule
		SEM (state-space predictor) & $0.885\pm i0.372\, , \; 0.723$ \\
		\midrule
		Method I (state-space predictor) & $0.884\pm i0.369\, , \; 0.349$ \\
		\midrule
		Method II (state-space predictor) & $0.885\pm i0.373\, , \; 0.213$ \\
		\bottomrule
	\end{tabular} 
	\normalsize
\end{table}
\begin{table}[ht]
	\caption{\label{t:G_sys_matrix}Numerical example: real and identified system parameters.}
	\centering
	\setlength\tabcolsep{2.8pt}
	\small
	\begin{tabular}{c c c}
		\toprule
		& A & B \\ 
		\midrule
		\begin{tabular}{@{}c@{}} True system (trapezoid\\ approximation) \end{tabular} & $\begin{bmatrix} 0.979 & -0.564 & -9.335 \\ 0.096 & 0.895 & -1.964 \\ 0.004 & 0.058 & 0.265 \end{bmatrix}$ & $\begin{bmatrix} 15.91 \\ 0.785 \\ 0.021 \end{bmatrix}$ \\
		\midrule
		\begin{tabular}{@{}c@{}} SEM \\ (state-space predictor) \end{tabular} & $\begin{bmatrix} 1.095 & -1.882 & 3.252 \\ 0.090 & 0.976 & -2.819 \\ 0.006 & 0.039 & 0.422 \end{bmatrix}$ & $\begin{bmatrix} 15.21 \\ 0.817 \\ -0.016 \end{bmatrix}$ \\
		\midrule
		\begin{tabular}{@{}c@{}} Method II \\ (state-space predictor) \end{tabular} & $\begin{bmatrix} 0.963 & -0.448 & -10.38 \\ 0.111 & 0.760 & -0.647 \\ 0.003 & 0.059 & 0.261 \end{bmatrix}$ & $\begin{bmatrix} 16.03 \\ 0.557 \\ 0.031 \end{bmatrix}$ \\
		\bottomrule
	\end{tabular} 
	\normalsize
\end{table}
\subsection{Experimental case study}
\label{s:exp_res}
\begin{figure}[thpb]
	\centering
	\includegraphics[width=1\columnwidth ,trim={0 3cm 0 0},clip]{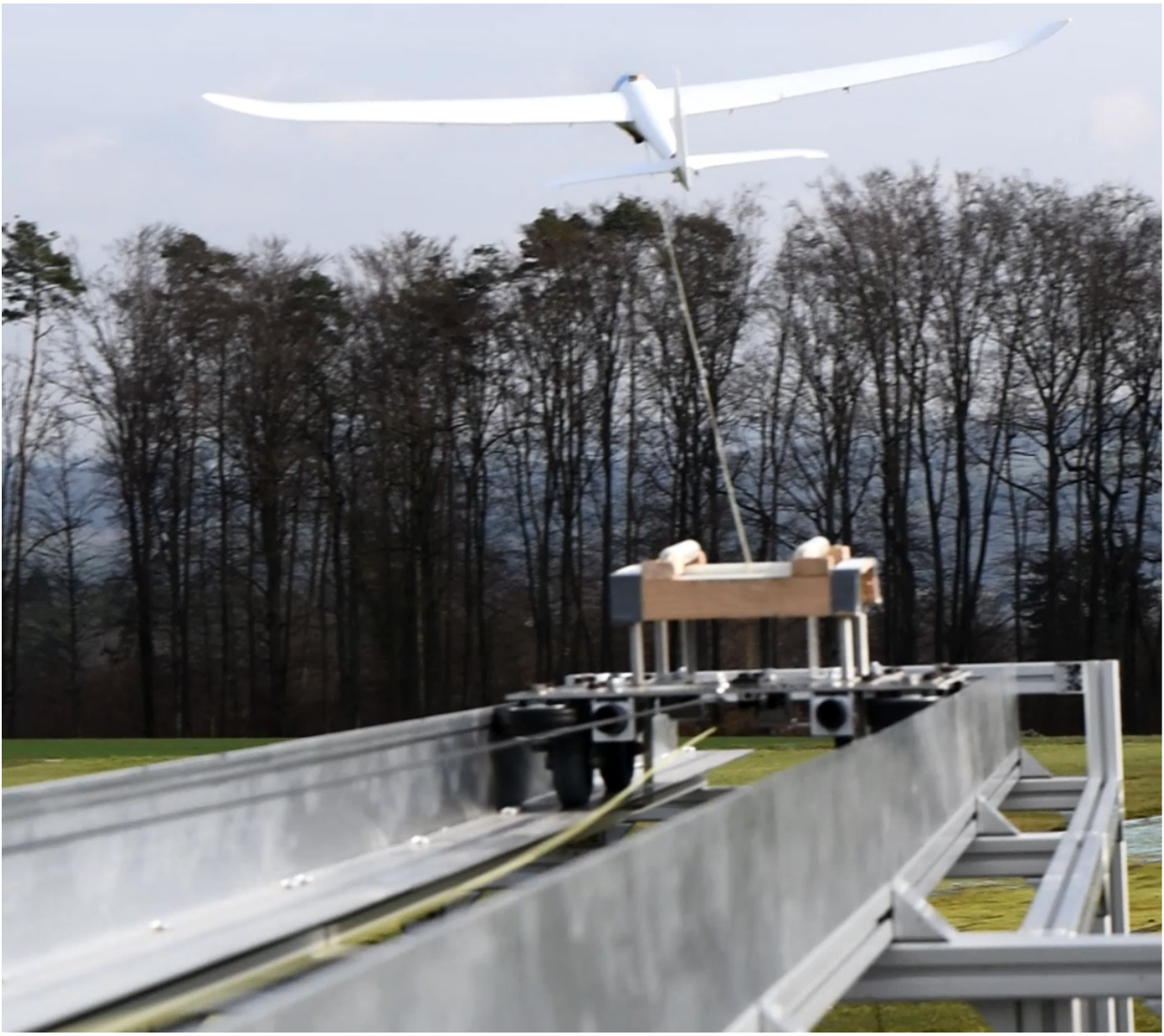}
	\caption{Experimental case study: considered tethered aircraft during an autonomous take-off maneuver.}
	\label{f:glider}
\end{figure} 
Here, we present the results obtained with the proposed identification approach applied to data acquired from real-world test flights of a small-scale prototype of an autonomous tethered aircraft, used for Airborne Wind Energy (AWE) generation, see Fig. \ref{f:glider} and \cite{fagiano2018glider}. We focus on the identification of a model of the roll-rate dynamics of the aircraft, resorting to a data set collected during several experiments. The data acquisition begins right after the take-off phase of each test flight, when the aircraft starts performing  eight-shaped flight patterns parallel to the ground. The system description, along with more detail about the measurements and data set acquisition, is available in \cite{fagiano2018glider}.

As a first approximation, the dynamical equation for the roll angle of the aircraft is given by:
\begin{equation}
\label{eq:glider_roll_eq}
\ddot{\sigma}(t)=a_{\sigma}\dot{\sigma}(t)+b_{\sigma}u(t),
\end{equation}
where $a_{\sigma}$ and $b_{\sigma}$ are parameters to be identified, and $u(t)$ is the control input for the ailerons. Equation \eqref{eq:glider_roll_eq} is a reasonable linear approximation of the nonlinear turning dynamics when the aircraft flies parallel to the ground, as in the considered experiments. The aircraft is autonomous, i.e. it features a feedback controller that manipulates the aileron, rudder, and front propeller to achieve the desired figure-of-eight patterns, which are typical of AWE applications. The data set includes measures of the roll rate and of the ailerons input signal, acquired with a sampling frequency of 50 Hz, given by $\tilde{y}(t)=x(t)+d(t)$, where $d(t)$ is the unknown measurement noise, and $\tilde{u}(t)$, respectively. The identification data set is composed of 11000 samples of each signal, while the validation data set features 6600 data points. 

Since the system state is measurable, we resort to a state-space form predictor of order 1. We apply Procedures \ref{p:d_bar_est_procedure} and \ref{p:decay_est_procedure}, obtaining $\bar{d}=0.82$, $\hat{L}=1.31$, $\hat{\rho}=0.995$ and $\bar{p}=691$. Fig. \ref{f:lambda_glider_SS} depicts the behavior of the error bound $\underline{\lambda}_p$ after the estimation of the disturbance bound $\bar{d}$. Then, we resort to Method II to identify the unknown parameters of \eqref{eq:glider_roll_eq}, obtaining $\hat{a}_{\sigma}=0.959$ and $\hat{b}_{\sigma}=0.120$, and we test the predictor performance against PEM and SEM approaches. Figs. \ref{f:tau_glider_SS} and \ref{f:err_glider_SS} show a performance comparison  in terms of guaranteed simulation error bound $\hat{\tau}_p$ and validation error $e_p$, while Fig. \ref{f:Y_sim_glider_SS} presents an example of time-course of the roll rate, both measured (validation data) and simulated. Table \ref{t:glider_SS_RMSE} shows the RMSE for different horizon lengths. These results confirm that the predictor identified with Method II represents a good trade-off between the PEM and the SEM approaches, combining the one-step-ahead accuracy of the first, with the simulation accuracy over longer horizons of the latter.
\begin{figure}[thpb]
	\centering
	\includegraphics[width=0.99\columnwidth]{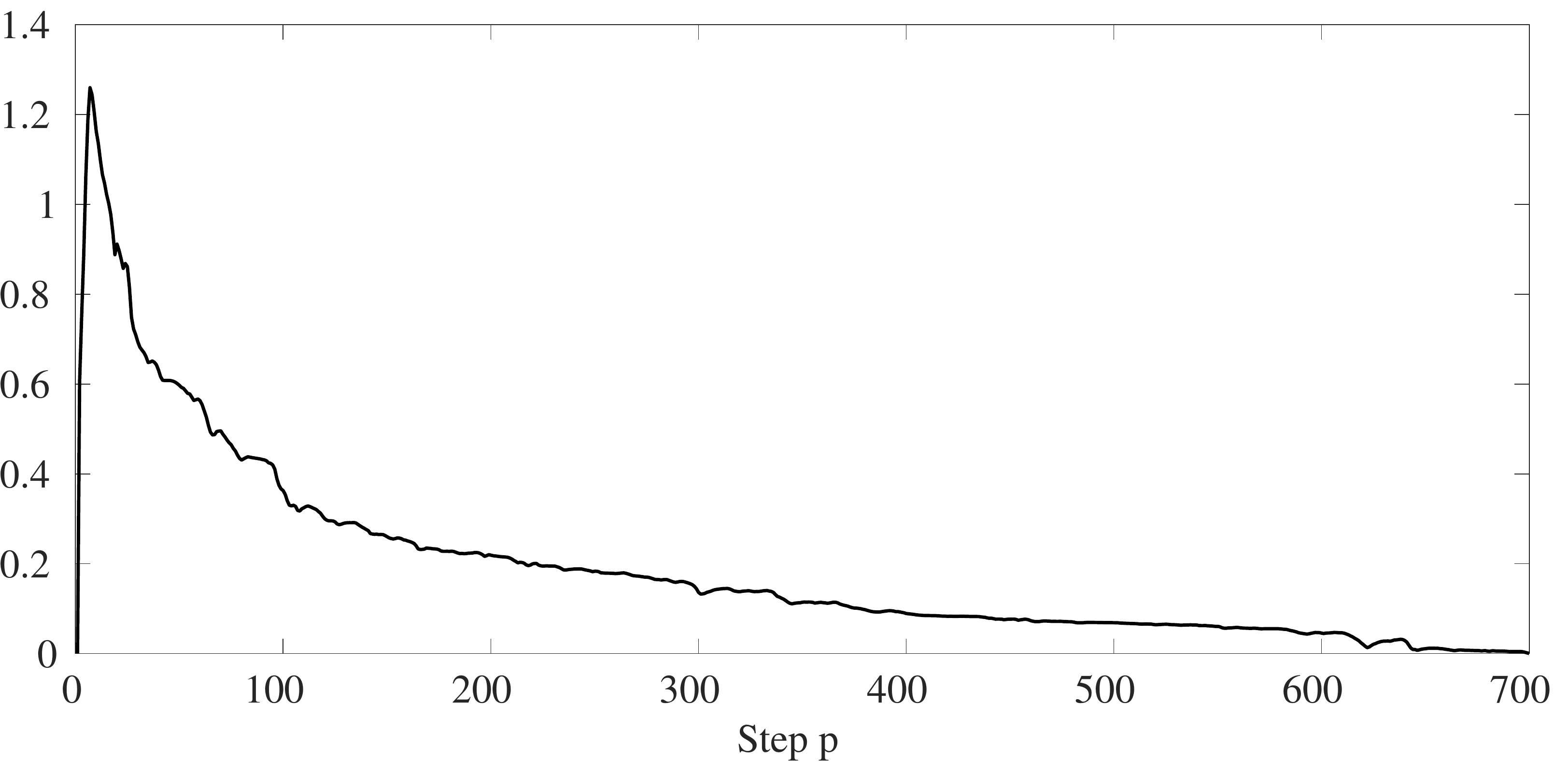}
	\caption{Experimental case study: estimated value of $\underline{\lambda}_p$ using a predictor in the state-space form.}
	\label{f:lambda_glider_SS}
\end{figure}
\begin{figure}[thpb]
	\centering
	\includegraphics[width=0.99\columnwidth]{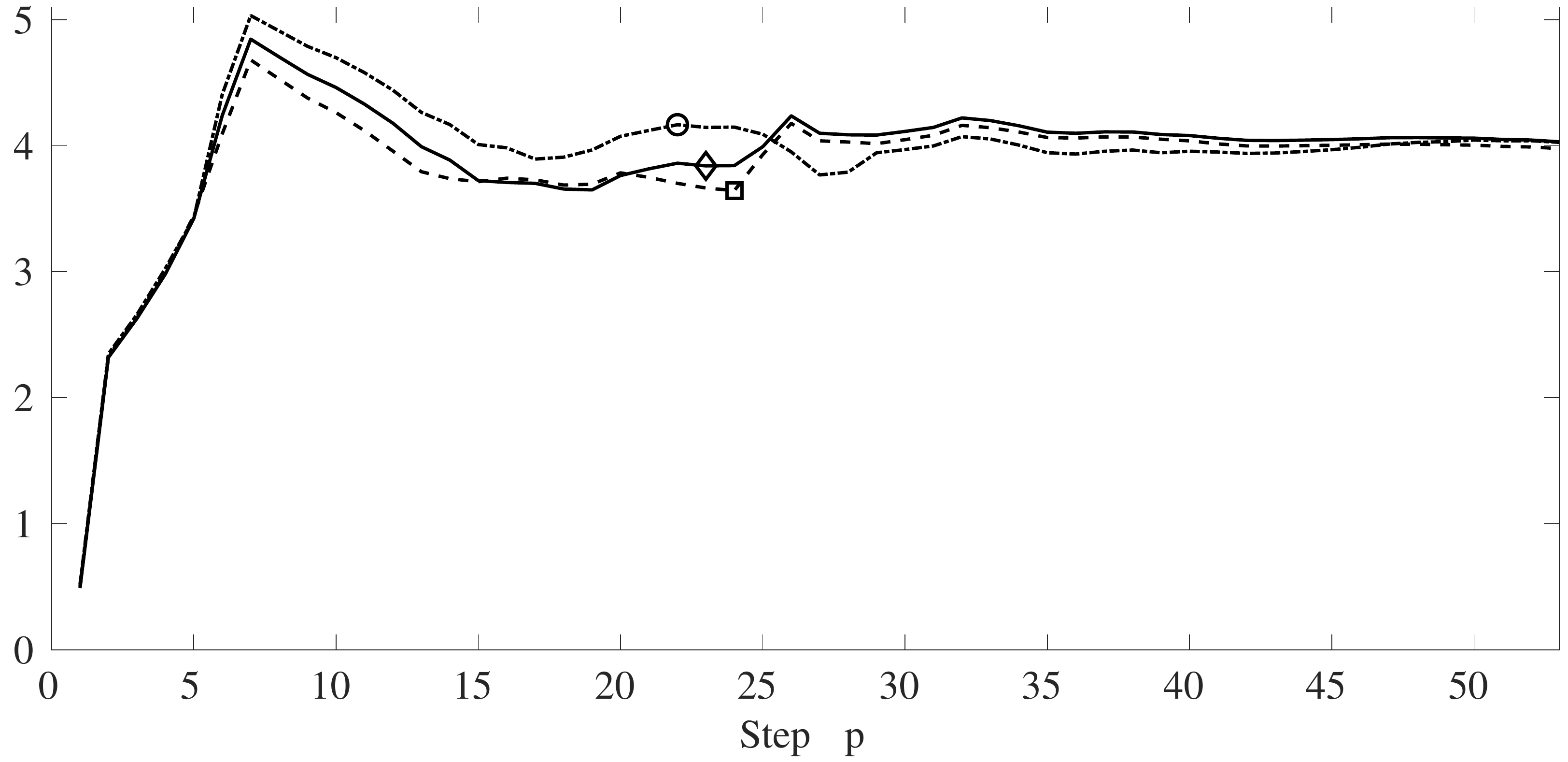}
	\caption{Experimental case study: guaranteed simulation error $\hat{\tau}_p$. Solid line with $\diamond$: Method II; dashed line with $\square$: SEM approach; dash-dotted line with $\circ$: PEM approach.}
	\label{f:tau_glider_SS}
\end{figure}
\begin{figure}[thpb]
	\centering
	\includegraphics[width=0.99\columnwidth]{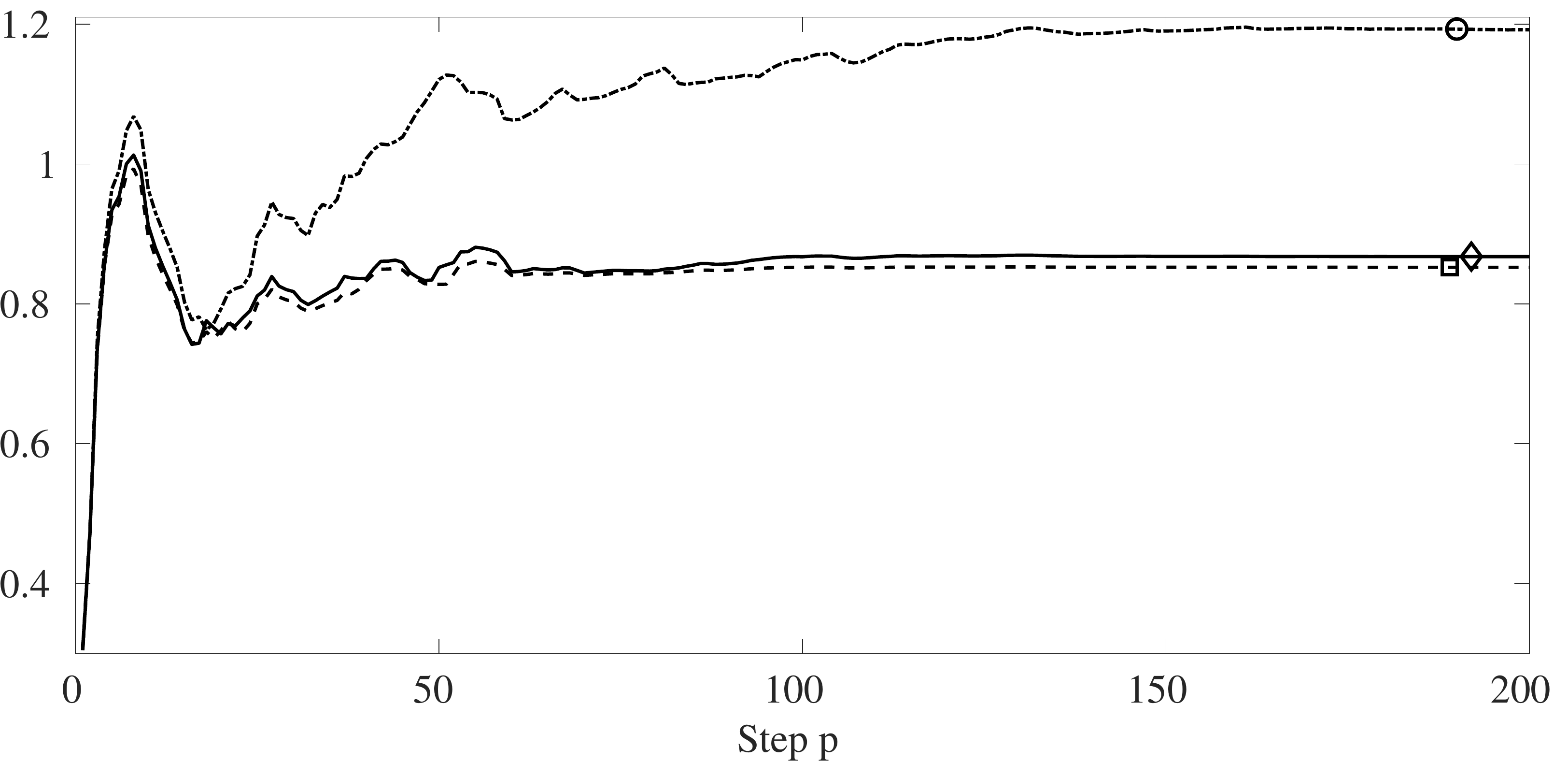}
	\caption{Experimental case study: validation error $e_p$. Solid line with $\diamond$: Method II; dashed line with $\square$: SEM approach; dash-dotted line with $\circ$: PEM approach.}
	\label{f:err_glider_SS}
\end{figure}
\begin{table}[ht]
\caption{\label{t:glider_SS_RMSE}Experimental case study: Root Mean Square Error.}
\centering
\setlength\tabcolsep{3pt}
\small
\begin{tabular}{c c c c c c c c}
\toprule
RMSE & \footnotesize{$p=1$} & \footnotesize{$p=2$} & \footnotesize{$p=10$} & \footnotesize{$p=20$} & \footnotesize{$p=30$} & \footnotesize{$p=60$} & \small sim \\ 
\midrule
PEM & 0.0477 & 0.0811 & 0.196 & 0.251 & 0.264 & 0.293 & 0.324 \\ 
\midrule
SEM & 0.0481 & 0.0813 & 0.188 & 0.227 & 0.230 & 0.231 & 0.232 \\ 
\midrule
Method II & 0.0478 & 0.0810 & 0.189 & 0.231 & 0.234 & 0.232 & 0.234 \\ 
\bottomrule
\end{tabular} 
\normalsize
\end{table}
\begin{figure}[thpb]
	\centering
	\includegraphics[width=0.99\columnwidth]{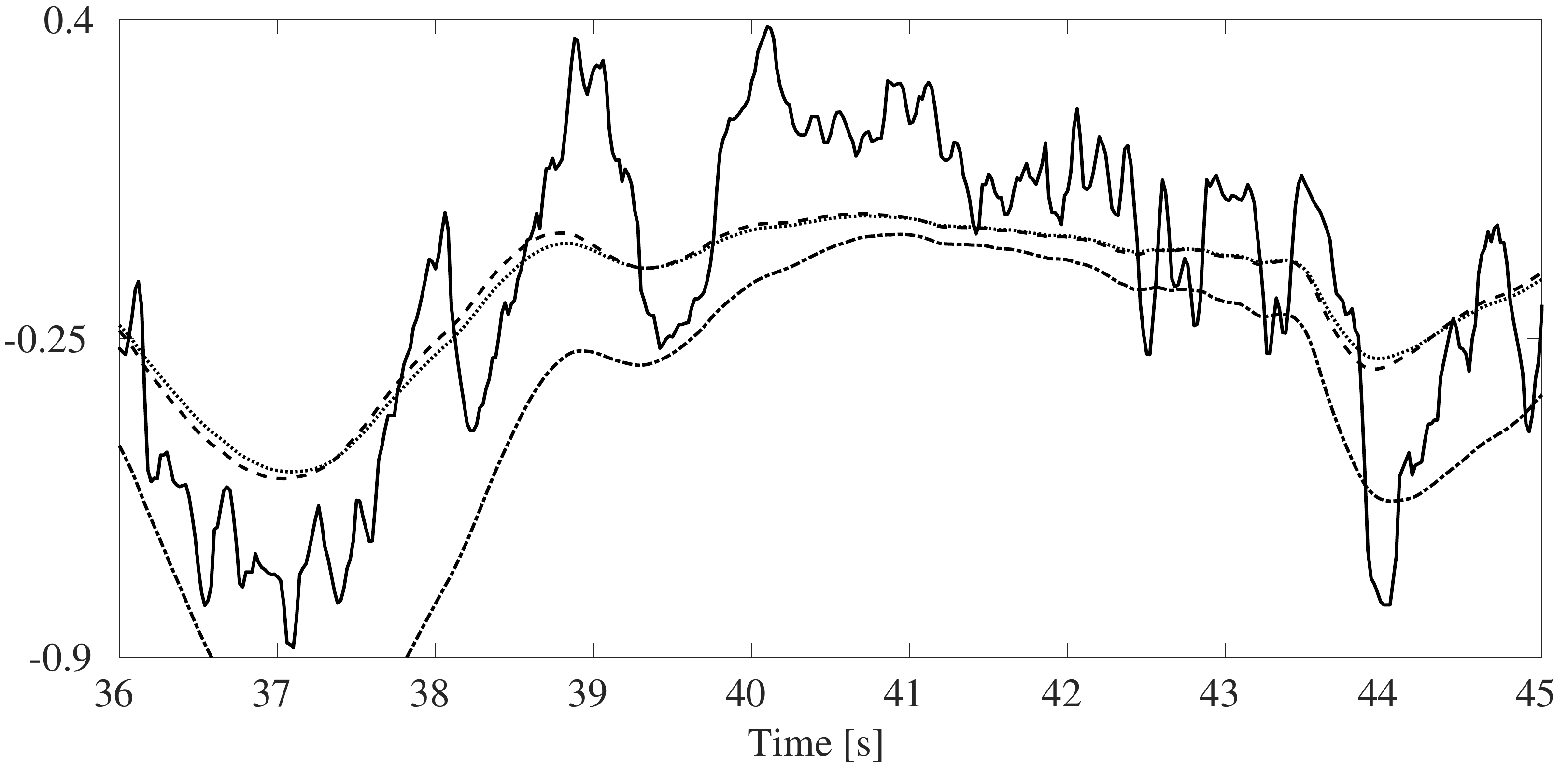}%Y2_sim_glider_SS_zoom2}
	\caption{Experimental case study: simulated roll rate $[\nicefrac{rad}{s}]$ with the state-space predictor. Solid line: measured roll rate $\tilde{y}$; dashed line: simulated roll rate with SEM predictor; dash-dotted line: simulated roll rate with PEM predictor; dotted line: simulated roll rate with Method II predictor.}
	\label{f:Y_sim_glider_SS}
\end{figure}
\section{Conclusions}
\label{s:conclusions}
We presented new results pertaining to the  identification of linear systems with guaranteed simulation error bounds, resorting to a Set Membership framework. The theoretical findings lead to clear procedures to estimate the noise bound, model order and system decay trend. Moreover, we  derived a simulation error bound for an infinite simulation horizon, together with its properties and convergence conditions. This bound allowed us to demonstrate that it is possible to use the decay rate constraints to enforce the asymptotic stability of the identified model. Then, we presented two methods to learn one-step-ahead prediction models exploiting the estimated quantities.  Numerical simulations illustrate the validity and the performance of the proposed identification methods, which we compared to standard PEM and SEM identification approaches. Furthermore, an experimental case study illustrates the applicability on real data. Future work will be devoted to the extension of the proposed identification framework to the nonlinear case.
\appendix
\small
\subsection*{Proof of Theorem \ref{th:conv_lambda_diff_d}}
\noindent From \eqref{eq:disturbed_output}, \eqref{eq:output_gen_pred_form}, and \eqref{eq:phi_set_arx_case}, it follows that:
\[ y_{i_p}=\psi_{i_p}^T\theta_{i_p}^0+d_i=\left(\varphi_{i_p}-\Delta_{i_p}\right)^T\theta_{i_p}^0+d_i, \]
where $\Delta_{i_p} \in \mathbb{D}_{i_p}$, defined in \eqref{eq:D_arx_def}. Thus, \eqref{eq:opt_multistep} can be written as:
\begin{equation}
\label{eq:lambda_in_proof}
\bar{\varepsilon}_{i_p}^0 = \min_{\theta_{i_p} \in \Omega} \max_{\left[\begin{smallmatrix} \varphi_{i_p} \\ y_{i_p} \end{smallmatrix}\right]\in \mathscr{V}_{i_p}} \left( \left\vert \varphi_{i_p}^T \left( \theta_{i_p}^0-\theta_{i_p} \right)-\Delta_{i_p}^T\theta_{i_p}^0 + d_i \right\vert -\bar{d}_i \right).
\end{equation}
Since $\bar{d}_i$ is a constant, we have:
\[ \begin{aligned}
&\max_{\left[\begin{smallmatrix} \varphi_{i_p} \\ y_{i_p} \end{smallmatrix}\right]\in \mathscr{V}_{i_p}} \left( \left\vert \varphi_{i_p}^T(\theta_{i_p}^0-\theta_{i_p})-\Delta_{i_p}^T\theta_{i_p}^0 + d_i \right\vert -\bar{d}_i \right) \\
&\quad = \max_{\left[\begin{smallmatrix} \varphi_{i_p} \\ y_{i_p} \end{smallmatrix}\right]\in \mathscr{V}_{i_p}}  \left\vert \varphi_{i_p}^T(\theta_{i_p}^0-\theta_{i_p})-\Delta_{i_p}^T\theta_{i_p}^0 + d_i \right\vert -\bar{d}_i
\end{aligned} \]
Then, by defining $\Sigma_{i_p} = \varphi_{i_p}^T(\theta_{i_p}^0-\theta_{i_p})-\Delta_{i_p}^T\theta_{i_p}^0 + d_i$, we can write:
\[
\bar{\varepsilon}_{i_p}^0= \min_{\theta_{i_p} \in \Omega^{\phantom{0}}} \begin{cases} \max_{\left[\begin{smallmatrix} \varphi_{i_p} \\ y_{i_p} \end{smallmatrix}\right]\in \mathscr{V}_{i_p}} (\Sigma_{i_p})-\bar{d}_i, \; \text{if} \; \Sigma_{i_p} \geq 0 \\ 
- \min_{\left[\begin{smallmatrix} \varphi_{i_p} \\ y_{i_p} \end{smallmatrix}\right]\in \mathscr{V}_{i_p}} (\Sigma_{i_p})-\bar{d}_i, \; \text{if} \; \Sigma_{i_p} <0
\end{cases}
\]
By definition, the set $\mathscr{V}_{i_p}$  always contains at least an occurrence of $\underline{\varphi}_{i_p}$ and $\underline{y}_{i_p}$ such that:
\[
\bar{\varepsilon}_{i_p}^0= \min_{\theta_{i_p}\in \Omega} \left\vert \underline{\varphi}_{i_p}^T(\theta_{i_p}^0-\theta_{i_p}) - \underline{\Delta}_{i_p}^T\theta_{i_p}^0 +\bar{d}_{0_i} \right\vert - \bar{d}_i
\]
where $\left\vert \underline{\Delta}_{i_p} \right\vert= \left[\bar{d}_{0_i}, \, \hdots, \, \bar{d}_{0_i}, \, 0, \, \hdots, \, 0 \right]^T$. Then, we have that:
\[
\begin{cases} 
\max\limits_{\left[\begin{smallmatrix} \varphi_{i_p} \\ y_{i_p} \end{smallmatrix}\right]\in \mathscr{V}_{i_p}} (\Sigma_{i_p}) = \underline{\varphi}_{i_p}^T(\theta_{i_p}^0-\theta_{i_p}) + \left\Vert \theta_{i_{p,z}}^0 \right\Vert_1 \bar{d}_{0_i} + \bar{d}_{0_i}, \\
\qquad \qquad \qquad \; \text{if} \; \Sigma_{i_p} \geq 0 \\
\\
\min\limits_{\left[\begin{smallmatrix} \varphi_{i_p} \\ y_{i_p} \end{smallmatrix}\right]\in \mathscr{V}_{i_p}} (\Sigma_{i_p}) = \underline{\varphi}_{i_p}^T(\theta_{i_p}^0-\theta_{i_p}) - \left\Vert \theta_{i_{p,z}}^0 \right\Vert_1 \bar{d}_{0_i} - \bar{d}_{0_i}, \\
\qquad \qquad \qquad \; \text{if} \; \Sigma_{i_p} < 0
\end{cases}
\]
where $\underline{\varphi}_{i_p}^T(\theta_{i_p}^0-\theta_{i_p}) \geq 0$ if $\Sigma_{i_p} \geq 0$, and $\underline{\varphi}_{i_p}^T(\theta_{i_p}^0-\theta_{i_p})<0$ if $\Sigma_{i_p}<0$. Thus, under Assumption \ref{as:model_order}, the optimal choice of $\theta_{i_p}$ that minimizes the resulting $\bar{\varepsilon}_{i_p}^0$ is such that $\underline{\varphi}_{i_p}^T(\theta_{i_p}^0-\theta_{i_p})=0$. Thus, given $\underline{\varphi}_{i_p}$, $\underline{y}_{i_p}$, $\underline{\Delta}_{i_p}$, and the corresponding optimal choice of $\theta_{i_p}$, we have:
\begin{equation}
\label{eq:lambda_remaining_proof}
\bar{\varepsilon}_{i_p}^0=\left\Vert \theta_{i_{p,z}}^0 \right\Vert_1 \bar{d}_{0_i} + \bar{d}_{0_i} - \bar{d}_i
\end{equation}
The term $\left\Vert \theta_{i_{p,z}}^0 \right\Vert_1 \bar{d}_{0_i}$ represents an upper bound of the free response of the system to an initial condition given by $\underline{\Delta}_{i_p}$. For an asymptotically stable system, $\left\Vert \theta_{i_{p,z}}^0 \right\Vert_1$ goes to zero with a decay rate which is upper bounded by $\rho_i$, see \eqref{eq:sists_dec_bound}, leading to:
\begin{equation}
\label{eq:lambda_to_zero_proof}
\left\Vert \theta_{i_{p,z}}^0 \right\Vert_1 \bar{d}_{0_i} \xrightarrow{p \to \infty} 0.
\end{equation}
Thus, from \eqref{eq:lambda_remaining_proof} and \eqref{eq:lambda_to_zero_proof}, it follows that:
$\bar{\varepsilon}_{i_p}^0 \xrightarrow{p \to \infty} \left( \bar{d}_{0_i} - \bar{d}_i \right)$.
Finally, from \eqref{eq:properties_preliminary_a} and \eqref{eq:properties_preliminary_b}, it follows that:
\begin{equation*} 
\begin{aligned}
&\lim_{p\to \infty}{\underline{\lambda}}_{i_p} \leq \lim_{p\to \infty}{\bar{\varepsilon}}_{i_p}^0 = \left( \bar{d}_{0_i} - \bar{d}_i \right), \\
&\lim_{p\to \infty}{\underline{\lambda}}_{i_p} \geq \lim_{p\to \infty}{\bar{\varepsilon}}_{i_p}^0 - \eta = \left( \bar{d}_{0_i} - \bar{d}_i \right)-\eta
\end{aligned} 
\end{equation*}
which proves the result \eqref{eq:thm1_convergence}. Note that, when $\bar{d}_i=\bar{d}_{0_i}$ and $o<n$, then $\underline{\lambda}_{i_p}$ converges (besides a quantity $\eta$ that can be made arbitrarily small with a larger data set) to a non-zero value as $p \to \infty$, due to model order mismatch. The rationale behind this statement is that, if $o<n$, there is at least a pair $(\varphi_{i_p},y_{i_p})\in \mathscr{V}_{i_p}$ such that it is not possible to find a $\theta_{i_p}$ that is able to give $\varphi_{i_p}^T \left(\theta_{i_p}^0 - \theta_{i_p} \right)=0$. This will introduce an additional non-zero term in \eqref{eq:lambda_remaining_proof}, making $\bar{\varepsilon}_{i_p}^0$ converge to a non-zero value as $p \to \infty$.
\subsection*{Proof of Corollary \ref{c:lambda_rate_conv}}
\noindent From Theorem \ref{th:conv_lambda_diff_d} it follows that, if $\bar{d}_i=\bar{d}_{0_i}$, then $\bar{\varepsilon}_{i_p}^0 \xrightarrow{p \to \infty} 0$ and $\underline{\lambda}_{i_p} \xrightarrow{p \to \infty} 0$. From the proof of Theorem \ref{th:conv_lambda_diff_d}, we have that, if $\bar{d}_i=\bar{d}_{0_i}$, for the values $\underline{\varphi}_{i_p}$ and $\underline{y}_{i_p}$ defined previously, \eqref{eq:lambda_in_proof} corresponds to $\bar{\varepsilon}_{i_p}^0 = \left\Vert \theta_{i_{p,z}}^0 \right\Vert_1 \bar{d}_{0_i}$. From \eqref{eq:FPS_with_decay}, it follows that $\bar{\varepsilon}_{i_p}^0 = \left\Vert \theta_{i_{p,z}}^0 \right\Vert_1 \bar{d}_{0_i} \leq n\bar{d}_{0_i} L_i \rho_i^{p+1}$, 
which, combined with \eqref{eq:properties_preliminary_a}, yields $
\underline{\lambda}_{i_p}\leq \bar{\varepsilon}_{i_p}^0-\eta \leq n \bar{d}_{0_i}L_i \rho_i^{p+1}$.
\subsection*{Derivation of equation \eqref{eq:err_bound_arx_pgrande}}
\noindent Let us denote with $\hat{z}(k+j|k)$ the $j$-steps ahead prediction of $z$ obtained using the measured output up to time $k$. Then, it is possible to employ the $\bar{p}$-steps ahead predictor $\hat{z}_i(k+\bar{p}+j|k+j)$ to obtain the $(k+\bar{p}+j)$-steps ahead prediction of the system output, using only data up to time instant $k$, according to
\begin{equation}
\label{eq:pred_appndx_tau_inf}
\hat{z}_i(k+\bar{p}+j|k)= \begin{bmatrix} \hat{z}_i(k+j|k) \\ \vdots \\ \hat{z}_i(k+j-o+1|k) \\ u(k+\bar{p}+j)^T \\ \vdots \\ u(k+j-o+1)^T \end{bmatrix}^T \theta_{i_{\bar{p}}}.
\end{equation}
From \eqref{eq:sim_bound_z} we have that $
\left\vert z_i(k+\bar{p}) -\hat{z}_i(k+\bar{p}|k) \right\vert \leq \hat{\tau}_{i_{\bar{p}}}(\theta_{i_{\bar{p}}})$. 
Since the regressor in \eqref{eq:pred_appndx_tau_inf} features predicted output values in place of the measured ones, we are introducing an additional prediction error, which can be expressed as:
\begin{equation}
\label{eq:apdx_add_err}
\sum_{m=1}^{min\{j,o\}} | y_i(k+j-m+1) - \hat{z}_i(k+j-m+1|k) | \, \theta_{i_{j-m+1}}.
\end{equation}
Having $h_{p,o}(\theta_{i_1}) \in \Gamma_{i_p}, \; \forall p \in [2,\bar{p}]$, from Assumptions \ref{as:bounded_dist}, and equation \eqref{eq:sim_bound_y}, it follows that $\left\vert \theta_{i_{j-m+1}} \right\vert \leq \hat{L}_i\hat{\rho}_i^{\bar{p}+m}$, and
$\big| y_i(k+j-m+1) - \hat{z}_i(k+j-m+1|k) \big| \leq \hat{\tau}_{i_{j-m+1}}(\theta_{i_{j-m+1}})+\bar{d}_i$.
Thus, \eqref{eq:apdx_add_err} can be upper-bounded by 
\begin{equation}
\label{eq:apdx_add_err_bound}
\sum_{m=1}^{min\{j,o\}} \left( \hat{\tau}_{i_{j-m+1}}(\theta_{i_{j-m+1}})+\bar{d}_i \right)\hat{L}_i\hat{\rho}_i^{\bar{p}+m}.
\end{equation}
Finally, adding \eqref{eq:apdx_add_err_bound} to the $\bar{p}$-steps ahead error bound $\hat{\tau}_{i_{\bar{p}}}(\theta_{i_{\bar{p}}})$ leads to \eqref{eq:err_bound_arx_pgrande}. 
\subsection*{Proof of Theorem \ref{th:conv_tau_inf_ARX}}
\noindent Equation \eqref{eq:tau_arx_pgrande_iter} can be written as
$\hat{\tau}_{i_{\ell\bar{p}+j}}(\theta_{i_{\ell\bar{p}+j}}) \leq \hat{\tau}_{i_{\bar{p}}}(\theta_{i_{\bar{p}}}) \sum_{m=0}^{\ell-1} \chi_{i,\bar{p}}^m + \bar{d}_i \left( \sum_{m=0}^{\ell} \chi_{i,\bar{p}}^m -1 \right) +\tau_{i_{max_{\{ j, \ell o \} }}} \chi_{i,\bar{p}}^\ell$.
The geometric series $\sum\limits_{m=0}^{\ell-1} \chi_{i,\bar{p}}^m$ converges to $\frac{1}{1-\chi_{i,\bar{p}}}$ as $\ell\to \infty$ if $\left\vert \chi_{i,\bar{p}} \right\vert <1$. Moreover, $\tau_{i_{max_{\{ j, \ell o \} }}} \chi_{i,\bar{p}}^\ell \xrightarrow{\ell \to \infty}0$, if $\left\vert \chi_{i,\bar{p}} \right\vert <1$. This leads to \eqref{eq:tau_inf_ARX_def}.
\subsection*{Proof of Lemma \ref{l:overshoot_cond}}
\noindent In \eqref{eq:dist_tau_inf_ARX}, the terms multiplying $\hat{\tau}_{i_{\bar{p}}}$ and $\bar{d}_i$ are truncated geometric series, so that $\hat{\tau}_{i_{\ell\bar{p}+j}}$ converges to $\hat{\tau}_{i_{\infty}}$ from below as $\ell\bar{p}+j \to \infty$. The last term is instead a vanishing element, since $\chi_{i,\bar{p}}^{\ell}$ is, and it converges to zero from above as $\ell\to \infty$. Thus, it is possible that $\hat{\tau}_{i_{\ell\bar{p}+j}} > \hat{\tau}_{i_{\infty}}$ for some $\ell$ and $j$ under which \eqref{eq:dist_tau_inf_ARX} has a negative result. Since $\tau_{i_{max_{\{j,\ell o\}}}} \leq \tau_{i_{max}}$, by imposing condition \eqref{eq:dist_tau_inf_ARX} $>0$, where $\tau_{i_{max_{\{j,\ell o\}}}}$ is replaced by $\tau_{i_{max}}$, one finds \eqref{eq:lemma_1_condition}, which defines the values of $\tau_{i_{max}}$ guaranteeing that $\hat{\tau}_{i_{\ell\bar{p}+j}}(\theta_{i_{\ell\bar{p}+j}}) \leq \hat{\tau}_{i_{\infty}}(\theta_{i_{\bar{p}}}), \; \forall \ell,j$.
%\subsection*{Proof of Lemma \ref{l:end_of_overshoot}}
%\noindent Straightforward consequence of Lemma \ref{l:overshoot_cond}.
\subsection*{Proof of Theorem \ref{th:asymptotic_stability}}
\noindent Under Assumption \ref{as:asympt_stable}, system \eqref{eq:sist_desc} is BIBO stable, meaning that 
\begin{equation}
\label{eq:part1_proof_th3}
\exists M>0: \, \left\vert z_i(k) \right\vert < M, \; \forall k \in \mathbb{Z},
\end{equation}
for any initial conditions and for any bounded input signal. Given an ARX predictor defined by its parameter vector $\theta_{i_1}$, if $h_{p,o}(\theta_{i_1}) \in \Gamma_{i_p}, \; \forall p \in [2,\bar{p}]$ and condition \eqref{eq:thm2_convergence} holds, then its infinite-horizon simulation error is bounded by a finite quantity $\hat{\tau}_{i_{\infty}}$. Moreover, if condition \eqref{eq:thm2_convergence} holds, it is possible to calculate a bound for the simulation error for any horizon length, resorting to \eqref{eq:tau_arx_pgrande_iter}, and said bound is finite. This implies that 
\begin{equation}
\label{eq:part2_proof_th3}
\exists M>0: \, \left\vert z_i(k)-\hat{y}_{i_k}(k|1,\theta_{i_1}) \right\vert < M, \; \forall k \in \mathbb{Z}.
\end{equation}
From \eqref{eq:part1_proof_th3} and \eqref{eq:part2_proof_th3}, it follows that $\exists M>0: \, \left\vert \hat{y}_{i_k}(k|1,\theta_{i_1}) \right\vert < M, \; \forall k \in \mathbb{Z}$, meaning that the predictor $\hat{y}_{i_k}(k|1,\theta_{i_1})$ is BIBO stable as well. Since the considered predictor \eqref{eq:1s_pred_classic_form} is an ARX model, it is completely observable and reachable, therefore \eqref{eq:1s_pred_classic_form} is also asymptotically stable.

\normalsize

\bibliographystyle{IEEEtran}

\vfill

\end{document}